\theoremstyle{plain}
\newtheorem{thm}{Theorem}
\newtheorem*{thm*}{Theorem}
\newtheorem{lem}[thm]{Lemma}
\newtheorem{corollary}[thm]{Corollary}
\newtheorem{proposition}[thm]{Proposition}
\theoremstyle{remark}
\newtheorem{assm}[thm]{Assumption}
\newtheorem{rem}[thm]{Remark}
\renewcommand{\norm}[1]{\left\Vert{#1}\right\Vert}
\newcommand{\Om}[0]{\Omega}
\newcommand{\sL}[0]{\mathscr{L}}
\newcommand{\pa}[1]{\left( {#1} \right)}
\newcommand{\an}[1]{\left\langle {#1}\right\rangle}
\newcommand{\ba}[1]{\left[ {#1} \right]}
\newcommand{\ab}[1]{\left| {#1} \right|}
\newcommand{\bc}[1]{\left\{ {#1} \right\}}
\newcommand{\ve}[1]{\left\| {#1} \right\|}
\newcommand{\fc}[2]{\frac{#1}{#2}}
\newcommand{\pf}[2]{\pa{\frac{#1}{#2}}}
\newcommand{\rc}[1]{\frac{1}{#1}}
\newcommand{\Var}[0]{\operatorname{Var}}
\newcommand{\ddd}[1]{\frac{d}{d #1}}
\newcommand{\TV}[0]{\operatorname{TV}}
\newcommand{\KL}[0]{\operatorname{KL}}
\newcommand{\be}[0]{\beta}
\newcommand{\flipi}[2]{{#1}^{\oplus #2}}
\newcommand{\Ent}[2]{\operatorname{Ent}_{#2}[{#1}]}
\newcommand{\blu}[1]{\textcolor{black}{#1}}
\begin{document}

\begin{frontmatter}
\title{Convergence Bounds for Sequential Monte Carlo\\on Multimodal 
Distributions using Soft Decomposition}

\begin{aug}
\author[A]{\fnms{Holden}~\snm{Lee}\ead[label=e1] {hlee283@jhu.edu}}
\and
\author[A]{\fnms{Matheau}~\snm{Santana-Gijzen}\ead[label=e2]{msanta11@jhu.edu}}
\address[A]{Department of Applied Mathematics \& Statistics, Johns Hopkins University\printead[presep={,\ }]{e1,e2}}

\end{aug}

\begin{abstract}
  We prove bounds on the variance of a function $f$ under the empirical measure of the samples obtained by the Sequential Monte Carlo (SMC) algorithm, with time complexity depending on local rather than global Markov chain mixing dynamics. SMC is a Markov Chain Monte Carlo (MCMC) method, which starts by drawing $N$ particles from a known distribution, and then, through a sequence of distributions, re-weights and re-samples the particles, at each instance applying a Markov chain for smoothing. 
    In principle, SMC tries to alleviate problems from multi-modality. However, 
    most theoretical guarantees for SMC are obtained by assuming global mixing time bounds, which are only efficient in the uni-modal setting. We show that bounds can be obtained in the truly multi-modal setting, with mixing times that depend only on local MCMC dynamics. Prior works in the multi-modal setting  \cite{schweizer2012nonasymptotic,paulin2018error,mathews2022finite}
    make the restrictive assumption that the Markov kernel in the SMC algorithm has no or limited movement 
    between disjoint partitions of the state space. Instead, we allow for mixing between modes by only considering SMC with Markov processes whose generator decomposes as $\langle f, \mathscr{L}_kf \rangle_{\mu_k} \leq \sum_{i=1}^m w_i\langle f, \mathscr{L}_{ki}f \rangle_{\mu_k^{(i)}}$. For mixture distributions, this covers common Markov chain approaches such as Langevin dynamics, Glauber dynamics and the Metropolis random walk. \blu{The main technical innovations in our proof are showing variance decay and hypercontractivity for mixture distributions. We apply our results to show efficient sampling from Gaussian mixtures.}
\end{abstract}



\end{frontmatter}


\section{Introduction}
Obtaining samples from a given target distribution of the form $p(x) \propto e^{-V(x)}$ is an important task across multiple disciplines. 
In higher dimensions, Markov Chain Monte Carlo (MCMC) methods are a useful tool in obtaining samples from a specified target. The idea is that given a number of particles, running a Markov chain to equilibrium will yield samples from the chosen stationary distribution. This generally works well for uni-modal distributions. However, multi-modal distributions pose serious issues for mixing times of MCMC methods. Specifically, the long time needed to move between modes can yield very poor mixing times. 

Sequential Monte Carlo (SMC) methods attempt to circumvent this issue by preserving the appropriate amount of samples in each mode. The general idea is that SMC obtains samples from a target distribution $\mu = \mu_n$ by moving particles through a sequence of distributions $\mu_0,\dots,\mu_n$. Particles are moved from $\mu_{k-1}$ to $\mu_k$ and then re-sampled according to their respective weights. After re-sampling at each level, a Markov chain is run 
so that the particles are more representative of the stationary distribution $\mu_k$.

SMC methods are well studied and there exists a body of work providing asymptotic and non-asymptotic guarantees \cite{schweizer,Chopin_2004}. However, much of the existing work only establishes bounds 
given global mixing times. In the multi-modal setting, these bounds are only relevant if the Markov chain is run for long times. 
Moreover, any bound depending on global mixing times does not improve upon bounds for vanilla MCMC methods targeting a single distribution, and hence does not explain the observed advantage of \emph{sequential} Monte Carlo for multi-modal distributions.
This issue is addressed by \cite{schweizer2012nonasymptotic,mathews2022finite}; however, both papers require a restrictive assumption that MCMC kernels cannot move samples between disjoint sets of the state space. 
By leveraging meta-stable approximations, \cite{paulin2018error} obtain a bound using local MCMC dynamics that allows movement between components. However, this approach requires very specific run times to ensure that particles only mix to the interior of a mode, thus limiting its application.

Our work builds upon existing literature by providing non-asymptotic variance for a non-negative function $f$ under the empirical measure $\mu_n^N$ which only depend on local mixing, and allow for arbitrarily overlapping support of mixture components.
Our results apply to SMC where the chosen kernel has the property that 
its generator satisfies the decomposition $\langle f, \mathscr{L}_kf \rangle_{\mu_k} \le \sum_{i=1}^m w_i\langle f, \mathscr{L}_{ki}f \rangle_{\mu_k^{(i)}}$, where $\mathscr{L}_i$ is the generator over the ith mixture component. It is easy to find popular examples of Markov chains which exhibit this property, including Langevin Dynamics and Metropolis Random Walk (MRW).

There are various other methods to approach sampling from multi-modal distributions. Closely related to SMC, simulated tempering \cite{Marinari_1992} moves particles between a sequence of distributions, allowing particles to move both directions through the sequence, and at each step applying a Markov kernel at each level. Guarantees  for sampling with simulated tempering using local mixing of Langevin Dynamics are provided in \cite{ge2020simulated}. 
Other approaches include parallel tempering \cite{woodard2009conditions} and annealed importance sampling \cite{neal2001annealed}. 

\blu{
A primary advantage of SMC compared to tempering methods is that we only have to move samples through distributions in one direction. However, this makes SMC more challenging to analyze, as the algorithm can no longer be understood as a single Markov chain, and so Markov chain decomposition theorems \cite{madras2002markov} do not apply. We will prove results for SMC under similar, albeit slightly stronger, assumptions on multimodal distributions that have been considered for simulated tempering \cite{ge2020simulated}.
}

\subsection{Overview of Sequential Monte Carlo}
Given a measure $\mu_n$ with density $p_n(x) \propto e^{-V(x)}$, Monte Carlo methods seeks to estimate $\mu_n(f)$ by $\frac{1}{N}\sum_{i=1}^N f(\xi_n^{(i)})$, where $\xi_n^{(i)} \sim \mu_n$ for $1\le i\le N$. 
SMC achieves this by first sampling  $\{\xi_1^{(i)}\}_{i=1}^N \sim \mu_1$, where $\mu_1$ in practice is chosen to be a measure for which samples are easily obtainable. The $N$ particles are then transformed through a sequence of measures $\mu_1, \dots, \mu_n$, resulting in (approximate) samples from $\mu_n$. In the transformation process, each particle is re-weighted according to the ratio $\frac{d\mu_{k}}{d\mu_{k-1}}(\xi_{k-1}^{(i)})$, which will be denoted as $\bar{g}_{k-1,k}(\xi_{k-1}^{(i)})$. The particles are then re-sampled with their respective weights and a Markov kernel is applied for smoothing. 

\begin{algorithm}
    \caption{Sequential Monte Carlo}
    \label{alg}
\begin{algorithmic}[1]
    \STATE Sample $\{\xi_1^{(i)}\}_{i=1}^N \sim \mu_1$. 
    \FOR{$k=1, \dots, n-1$}
    \STATE Re-sample $\{\hat{\xi}_{k+1}^{(i)}\}_{i=1}^N$ from a multinomial distribution with probabilities $\{w_{k+1}^{(i)}\}_{i=1}^N$ where
    $$w_{k+1}^{(i)} = \frac{\bar{g}_{k,k+1}(\xi_k^{(i)})}{\sum_{j=1}^N \bar{g}_{k,k+1}(\xi_k^{(j)}) }.$$
    Let $\hat{\xi}_{k+1}^{(i)} = \xi_{k}^{(i')}$ if index $i'$ is chosen.
    \STATE Obtain $\{{\xi}_{k+1}^{(i)}\}_{i=1}^N$ by applying the transition kernel $P_{k+1}$ to $\{\hat{\xi}_{k+1}^{(i)}\}_{i=1}^N$.
    \ENDFOR
    \STATE 
    Output $\{\xi_n^{(i)}\}_{i=1}^N$ as approximate samples from $\mu_n$. 
\end{algorithmic}
\end{algorithm}
\subsection{Notation}
Our work builds upon Schweizer's non-asymptotic bounds for Sequential MCMC Methods \cite{schweizer2012nonasymptotic} and by nature we adopt their notation. Suppose that $\mu_k$ has density $p_k\propto \tilde p_k$ and we have access to $\tilde p_k$. We denote the non-normalized ratio between two distributions by $g_{k-1,k}(x) = \fc{\tilde p_k(x)}{\tilde p_{k-1}(x)}$  and the normalized ratio by $\bar{g}_{k-1,k}(x) = \fc{p_k(x)}{p_{k-1}(x)}$. For a measure space $(\Om, \mu)$ we denote the integral by
\begin{equation*}
    \mu(f) \vcentcolon = \int_\Omega f(x) \mu(dx).
\end{equation*}
Given samples $\xi_1, \dots, \xi_n$ we denote the Monte Carlo estimate of $\mu(f)$ by
\begin{equation*}
    \eta^N(f) \vcentcolon = \frac{1}{N}\sum_{i=1}^N f(\xi_i).
\end{equation*}
To avoid the dependence in the sampling step, the analysis in Schweizer is defined through a filtration. 
The filtration is defined as $
\mathcal F_1\subseteq\cdots \subseteq \mathcal F_n$ where $\mathcal{F}_k$ is the $\sigma$-algebra generated by the samples $\{\xi_k^{(i)}\}_{i=1}^N$. Thereby, they 
define the following weighted empirical measure,
\begin{equation*}
    \nu(f) \vcentcolon = \varphi_k\nu_k^N(f), \text{ where } \varphi_k = \prod_{j=1}^{n-1}\eta_j^N(g_{j,j+1}).
\end{equation*}
For a measurable space $E$, let $B(E)$ be the set of bounded, measurable functions on $E$. 
For $f\in B(E)$, the action of a Markov kernel $K$ is defined as
\begin{equation*}
    K(f)(x) \vcentcolon= \int_Ef(z)K(x,dz)
\end{equation*}
and the measure $\mu K$ is defined as
\begin{equation*}
    \mu K(A) \vcentcolon = \int_E K(x,A)\mu(dx).
\end{equation*}
Lastly, to work with the filtration, we define the following:
\begin{equation*}
q_{k-1,k}(f) \vcentcolon= \bar{g}_{k-1,k}P_k(f).
\end{equation*}
The above can be interpreted as a single step of scaling the weights from $\mu_{k-1}$ to $\mu_k$ and then mixing at the $k$-th level. Similarly, the expression below can be interpreted as mixing at the $k-1$ level and then scaling to the $k$-th level: 
\begin{equation*}
    \hat{q}_{k-1,k}(f) \vcentcolon= P_{k-1}( \bar{g}_{k-1,k}f) .
\end{equation*}
Define $q_{j,k} = q_{j,j+1}\circ \cdots \circ q_{k-1,k}$ and similarly define $\hat q_{j,k}$.
The following properties of the above functions hold: 
\begin{align*}
    q_{j,k}(q_{k,n}(f)) &= q_{j,n}(f)&
    \hat{q}_{j,k}(\hat{q}_{k,n}(f)) &= \hat{q}_{j,n}(f)\\
\mu_j(q_{j,n}(f)) &= \mu_n(f) &
\mu_j(\hat{q}_{j,n}(f)) &= \mu_n(f).
\end{align*}
For a discrete time Markov chain $P$ we note that the Dirichlet form is defined as
\begin{equation}\label{dirichlet:discrete}
\mathscr{E}_\pi(f,f) \vcentcolon= \langle f , (I-P)f \rangle_\pi.
\end{equation}

\subsection{Discrete and Continuous-Time Generators}

\blu{A central assumption of our paper (Assumption \ref{a}) relies on properties of a Markov generator $\mathscr{L}$ and its associated Dirichlet form $\mathcal{E}(f,f) = -\langle f, \mathscr{L} f \rangle_{\mu}$. The building blocks for our main result, Lemma \ref{intra}
(bounding intra-mode variance) and Lemma \ref{hyper}
(hypercontractivity), are presented in a continuous-time framework.}

To show that our main results can be applied to a variant of common discrete-time Markov chains, namely Glauber dynamics (Section \ref{s:Glauber_dynamics}) and the Metropolis-Hastings chain (Section \ref{s:Metropolis_chain}), we formally connect the Markov generator of a discrete chain to its continuous time equivalent.

For any discrete-time, reversible Markov chain $P$ with stationary distribution $\pi$, we can define its associated \textbf{continuous-time Markov semigroup} $P_t$ by:
\begin{equation} \label{eq:ct_markov_semigroup}
P_t = e^{t(P-I)}
\end{equation}
The generator $\mathscr{L}$ of the continuous time equivalent is defined as: 
$$\mathscr{L}f = \frac{\partial}{\partial t}P_t f|_{t=0} = (P-I)f.$$

\begin{itemize}
    \item \textbf{Discrete-Time:} The Dirichlet form for the chain $P$ is what we have already defined in Equation (\ref{dirichlet:discrete}): 
    \begin{equation*}
    \mathcal{E}_{P}(f,f) := \langle f,(I-P)f\rangle_{\pi}.
    \end{equation*}

    \item \textbf{Continuous-Time:} The Dirichlet form for the generator $\mathscr{L}$ is:
    \begin{equation*}
    \mathcal{E}_{\mathscr{L}}(f,f) := -\langle f,\mathscr{L}f\rangle_{\pi} = -\langle f,(P-I)f\rangle_{\pi} = \langle f,(I-P)f\rangle_{\pi}.
    \end{equation*}
\end{itemize}

As shown, the two Dirichlet forms are identical:
\begin{equation} \label{eq:df_equiv}
\mathcal{E}_{P}(f,f) = \mathcal{E}_{\mathscr{L}}(f,f).
\end{equation}
This means that to show Assumptions \ref{a} hold, it is equivalent to work with the generator $\mathscr{L} = P-I$. 
Moreover, it should be noted that the continuous-time process $P_t$ can be simulated algorithmically by running the discrete-time process ``jump chain'' and drawing the holding time between each transition according to a Poisson point process \cite{Levin_Peres_Wilmer_Propp_Wilson_2017}. 
This makes our analysis not only theoretically sound but algorithmically relevant as well.
\color{black}
\subsection{Organization of paper}
In Section \ref{s:main}, we present the main results of our paper. Here we state our main assumptions and provide non-asymptotic bounds on the empirical variance $\eta_n^N(f)$. As corollaries to this result we provide both total variance and high probability bounds on the empirical estimate. Following this section, Section \ref{s:applications} provides simple applications of our main theorem. We show that our assumptions are satisfied by Langevin diffusion \blu{for mixtures of Gaussians of equal covariance, as well as sequences of Gaussian-noised distributions which can be learned with Noise Contrastive Estimation (NCE).}

In Section \ref{s:Overview}, we provide an overview of the proof of our main results. This section is designed to give readers an understanding of how the proceeding sections, Sections 5--8, are used in the proof of the main result. Here we explain how we connect the existing work in \cite{schweizer} to our results. In the following section, Section \ref{Var bounds}, we include the main results from \cite{schweizer}, which we adapt slightly to better fit our problem. 

Sections 6--8 contain our contributions to this problem. In Section \ref{local mixing}, we decompose the variance into intra- and inter-mode variances. This allows for us to use local Poincar\'e inequalities to bound the intra-mode variance. Furthermore, we are able to bound the inter-mode variance by assuming that our target measure is a mixture and inheriting a minimum mode weight term. Section \ref{mixing times} shows how the results in the previous section can be used to control the mixing time. Lastly, in Section \ref{s:hypercontractivity}, we show that the hypercontractive inequality over mixture distributions can be reduced to local mixing times by inheriting a minimum mode weight constant.

\section{Main Result} \label{s:main}

Before giving our main theorem, we first state our assumptions which capture how the distributions in our sequence decompose into multiple modes. These assumptions allow the state space to be either continuous or discrete, as they work for any Markov process whose generator decomposes over the components, with each satisfying a log-Sobolev inequality. We further need to assume that the ratio between subsequent measures is bounded.


\begin{assm}\label{a}
    Suppose SMC (Algorithm~\ref{alg}) is run with the sequence of distributions $\mu_1,\ldots, \mu_n$ with kernels $P_k = P_k^{t_k}$, where $P_k^t$ is the Markov semigroup generated by $\mathscr{L}_k$
    with stationary distribution $\mu_k$ and Dirchlet form $\mathscr{E}_k(f,f) = -\langle f, \mathscr{L}_kf\rangle_{\mu_k}$. In addition, assume the following hold: 
\begin{enumerate}
    \item \blu{At each level $k$, assume $\mu_k$ decomposes as the mixture $\mu_k = \sum_{i=1}^{\blu{M_k}}w_k^{(i)}\mu_k^{(i)}$, where $M_k$ is the number of components and $w_k^{(i)} > 0$ are the weights satisfying $\sum_{i=1}^{M_k}w_k^{(i)}=1$.}
    \item There exists $\gamma$ such that $\frac{d\mu_k}{d\mu_{k-1}} \leq \gamma$ a.e. for all $k$ (uniform upper bound on density ratios).
    \item There exists a decomposition 
$$\langle f, \mathscr{L}_kf \rangle_{\mu_k} \leq \sum_{i=1}^{\blu{M_k}} \blu{w_k^{(i)}}\langle f, \mathscr{L}_{ki}f \rangle_{\mu_k^{(i)}}$$
where $\mathscr{L}_{ki}$ is the generator of some Markov process $P_{ki}$ with stationary distribution $\mu_k^{(i)}$. 
\item Each distribution $\mu_k^{(i)}$ satisfies a log-Sobolev inequality of the form
$$\Ent{f^2}{\mu_k^{(i)}}\leq c_{ki} \cdot\mathscr{E}_{ki}(f,f),$$
and $C^*_k = \max_i c_{ki}$.
\end{enumerate}
\end{assm}

\blu{
\begin{rem}
    We note that our main theorem requires a bound on the minimum weight $w_k^{(i)}$, but does not assume any structure between the components; in particular, there is no requirement of closeness between components at different levels as we do not need any correspondence between them, and even the number of components can change. We leave as an open question how to derive finer bound when we take into account structure between the components. Intuitively, if a mode has small weight, this should only increase the complexity of SMC if this weight increases between levels. When the modes correspond across levels, then a natural conjecture is that the true complexity should depend not on the minimum weight but either a ``bottleneck coefficient'' or ``total variation'' in the weights, similarly to analyses of simulated tempering (see \cite[(7)]{woodard2009conditions}).
\end{rem}
We follow a common way of formalizing multimodality in the literature \cite{ge2018beyond,pmlr-v291-koehler25a} through the mixture assumption (part 1 above) where each component $\mu_k^{(i)}$ satisfies a functional inequality (as in part 4); that is, the natural Markov chain for the components mixes rapidly. We note that crucially, we only assume the existence of this decomposition, and its form is not known to the algorithm. As heuristic support for the decomposition hypothesis, we note it is polynomially equivalent to having at most $k$ low-lying eigenvalues for discrete state space \cite{lee2014multiway} or for Langevin \cite{miclo2015hyperboundedness}, and the latter holds for a distribution with $k$ modes in the limit of low temperature under mild regularity assumptions \cite[Chapter 8, Propositions 2.1--2.2]{kolokoltsov2007semiclassical}. We note that we do require the decomposition at each level $k$; this avoids cases where the target distribution may decompose well but bottlenecks are present at intermediate levels. An example where these bottlenecks arise is a mixture of Gaussians with different covariances (and perturbations thereof), for which the mixing time of parallel or simulated tempering can be exponentially large \cite{woodard2009sufficient}, and in fact, any algorithm that samples from this family in a black-box setting requires exponentially many queries in general
\cite{ge2018beyond}. We will give concrete examples where the assumptions are satisfied in Section \ref{s:applications}.
}

Our main result states that under these assumptions, for any $\epsilon>0$, there exists a number of samples $N$ and a time $t$ polynomial in all parameters, such that the SMC algorithm returns $N$ samples for which the empirical variance for a bounded function $f$ is within $\epsilon$ of the true variance. As corollaries to our main result, we will present both a high probability bound on $|\frac{1}{N}\sum_if(\xi_n^{(i)}) - \mu_n(f)|$ and a total variation bound on the empirical measure.

\begin{thm}\label{t:mainVar}
Suppose Assumption~\ref{a} holds.
Then for all $\epsilon > 0$, \blu{$M = \max_k M_k$}, \blu{$w^* = \min_{k,i} w_{k}^{(i)}$}, choosing
\blu{\begin{enumerate}
        \item $ N \geq n\cdot\max\bigg\{\frac{4\gamma M}{ w^*\epsilon}\cdot(1 
        + 3\Vert f - \mu_n(f)\Vert ^2_{\sup}),  \; \frac{128\gamma^{\frac{35}{8}}M^\frac{7}{4}}{(w^*)^\frac{15}{8}}\bigg\}$
        \item $t_k \geq 2C_k^*\gamma^7$
        \end{enumerate}}
yields 
    \begin{equation*}
        \blu{\mathbb{E}[(\eta_n^N(f) - \mu_n(f))^2]}\leq \epsilon.
    \end{equation*}
\end{thm}

\begin{proof} Proof provided in Section \ref{s:proofs}.
\end{proof}
This result is analogous to the main result in \cite{schweizer}, except that by specifying our target measure as a mixture we significantly reduce the overall mixing time. We will now show that as a corollary to our main result we can obtain high probability bounds similar to \cite{mathews2022finite}.

\begin{corollary}
[\bf High Probability Bound] Suppose Assumption~\ref{a} holds. Let
$\{\xi_n^{(1)},\dots, \eta_1^{(N)}\} \sim \hat{\mu}_n^N$ be the particles produced by the SMC algorithm. 
\blu{Then for all $\epsilon > 0$, \blu{$M = \max_k M_k$}, \blu{$w^* = \min_{k,i} w_{k}^{(i)}$}, choosing}
\blu{\begin{enumerate}
        \item $ N \geq n\cdot\max\bigg\{\frac{4\gamma M}{ w^*\epsilon^2\delta}\cdot(1 
        + 3\Vert f - \mu_n(f)\Vert ^2_{\sup}),  \; \frac{128\gamma^{\frac{35}{8}}M^\frac{7}{4}}{(w^*)^\frac{15}{8}}\bigg\}$
        \item $t_k \geq 2C_k^*\gamma^7$
        \end{enumerate}}
    yields 
    \begin{equation*}
        \ab{\frac{1}{N}\sum_{i=1}^N f(\xi_i^{(n)}) - \mu_n(f)} \leq \epsilon
    \end{equation*}
    with probability as least $1 - \delta$.
\end{corollary}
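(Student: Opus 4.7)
The plan is to derive this high-probability bound as an essentially immediate consequence of Theorem~\ref{t:mainVar} by applying Markov's inequality to the squared error $(\eta_n^N(f) - \mu_n(f))^2$. Observe that the first lower bound on $N$ in the corollary differs from the one in Theorem~\ref{t:mainVar} only by replacing $\epsilon$ with $\epsilon^2\delta$, while the second lower bound on $N$ is already independent of $\epsilon,\delta$ and transfers verbatim. This $\epsilon \mapsto \epsilon^2\delta$ rescaling is exactly the scaling produced by a second-moment-to-tail conversion via Markov, which is a strong signal that the corollary is meant to be proved in precisely this way.

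Concretely, I would first invoke Theorem~\ref{t:mainVar} with target precision $\epsilon^2\delta$ in place of $\epsilon$, reading $\Var_{\xi_n}(\eta_n^N(f))$ as the $L^2$ error of the SMC estimator around the true target $\mu_n(f)$ (this is the quantity actually controlled in the Schweizer-style analysis the paper builds on). Under the stated hypotheses, the conclusion becomes
\begin{equation*}
    \E\ba{\pa{\eta_n^N(f) - \mu_n(f)}^2} \leq \epsilon^2\delta.
\end{equation*}
The corollary's condition $t \geq \frac{\max_k C_k^*}{2}\max\bc{\gamma/\alpha,\log((p-1)/(p/2-1))}$ is precisely what is needed so that the single uniform choice $t_k=t$ satisfies the per-level requirement of Theorem~\ref{t:mainVar} for every $k$, since $\max_k C_k^*\geq C_k^*$.

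Next I would apply Markov's inequality to the non-negative random variable $(\eta_n^N(f) - \mu_n(f))^2$:
\begin{equation*}
    \pr\pa{\ab{\eta_n^N(f) - \mu_n(f)} \geq \epsilon} = \pr\pa{\pa{\eta_n^N(f) - \mu_n(f)}^2 \geq \epsilon^2} \leq \frac{\E\ba{(\eta_n^N(f) - \mu_n(f))^2}}{\epsilon^2} \leq \delta,
\end{equation*}
which is exactly the desired tail bound. Note that the gap between the corollary's allowed range $p>2$ and the theorem's $p=2^k$ with $k\in\mathbb{N}_{>1}$ can be closed by applying the theorem with the smallest power-of-two $p'\geq p$, which only inflates constants.

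The only delicate point I anticipate is the interpretation of $\Var$ in Theorem~\ref{t:mainVar}: if it is genuinely the variance around $\E[\eta_n^N(f)]$ rather than the MSE around $\mu_n(f)$, then an auxiliary bias bound is needed. However, it is standard in SMC analysis that the bias scales as $O(1/N)$ with constants of the same type as those already appearing in Theorem~\ref{t:mainVar}, so a triangle inequality of the form $(\eta_n^N(f)-\mu_n(f))^2 \leq 2\Var(\eta_n^N(f)) + 2(\E[\eta_n^N(f)]-\mu_n(f))^2$ would merge cleanly into the $N$ requirement before Markov is applied. Beyond this bookkeeping, no further obstacle is expected — the substantive work is already contained in Theorem~\ref{t:mainVar}, and the remaining argument is a routine moment-to-probability conversion.
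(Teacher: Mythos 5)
Your proposal is correct and takes essentially the same approach as the paper: the paper also deduces the corollary directly from Theorem~\ref{t:mainVar} (with the $\epsilon\mapsto\epsilon^2\delta$ rescaling) via a second-moment-to-tail bound, which it labels Chebyshev's inequality. Your observation that the theorem's $\Var_{\xi_n}(\eta_n^N(f))$ is really the mean-squared error around $\mu_n(f)$ (so that the conversion is Markov applied to the squared error) is exactly what the paper implicitly relies on — its own proof of Theorem~\ref{t:mainVar} bounds $\E[(\eta_n^N(f)-\mu_n(f))^2]$ via Lemma~\ref{l:2.2} and then writes it as $\Var(\eta_n^N(f))$ — so the ``delicate point'' you flag is already resolved in your favor and no auxiliary bias argument is needed.
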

\begin{proof}
    The proof follows from Chebyshev's inequality, 
    \begin{align*}
        1 - \frac{\Var(\eta_n^N(f))}{\epsilon^2} &\leq P\pa{\ab{\frac{1}{N}\sum_{i=1}^N f(\xi_i^{(n)}) - \mu_n(f)} \leq \epsilon}.\\
        \shortintertext{By Theorem \ref{t:mainVar}, $ \Var(\eta_n^N(f)) \leq\blu{\mathbb{E}[(\eta_n^N(f) - \mu_n(f))^2] \leq} 
        \delta \epsilon^2$, giving}
        1 - \delta &\leq P\pa{\ab{\frac{1}{N}\sum_{i=1}^N f(\xi_i^{(n)}) - \mu_n(f)} \leq \epsilon}.
    \end{align*}
\end{proof}
Our results are similar to \cite{mathews2022finite}, whose mixing times have a similar dependence on the spectral gap of the local Markov chain. However, our second assumption on the generators of the 
Markov process allows for us to obtain local mixing time results without restricting the movement of the Markov process to specified partitions of the target measure.  Additionally, the number of samples required in \cite{mathews2022finite} has a dependence on the measure over the specified partition. This is similar to the dependence on $\sum_i\frac{1}{w_i}$ in our results, which worsens for mixtures containing components with low weights. 
We additionally give the following corollary which bounds TV-distance between the empirical distribution and its target. 
\begin{corollary}[\bf Total Variation bound]
\label{t:main} 
Suppose Assumption~\ref{a} holds. Let $\{\xi_n^{(1)}, \dots ,\xi_n^{(N)}\} \sim \hat{\mu}_n^N$ be the joint measure and $\hat{\mu}_n$ be the marginal of the particles produced by SMC with target measure $\mu_n$. \blu{Then for all $\epsilon > 0$, \blu{$M = \max_k M_k$}, \blu{$w^* = \min_{k,i} w_{k}^{(i)}$}, choosing}
\blu{\begin{enumerate}
        \item $ N \geq n\cdot\max\bigg\{\frac{16\gamma M}{w^*\epsilon^2},  \; \frac{128\gamma^{\frac{35}{8}}M^\frac{7}{4}}{(w^*)^\frac{15}{8}}\bigg\}$
        \item $t_k \geq 2C_k^*\gamma^7$
        \end{enumerate}}
    yields 
    \begin{equation*}
       \Vert \hat{\mu}_n - \mu_n \Vert _{\textup{TV}} \leq \epsilon.
    \end{equation*}
\end{corollary}
\begin{proof}
     Using the definition of total variation distance and Cauchy-Schwarz,
     \begin{align*}
         \norm{\hat{\mu}_n - \mu_n}_{\textup{TV}} &= \sup_A | \hat{\mu}_n(I_A) - \mu_n(I_A)|= \sup_A | \mathbb{E}_{\xi_n}[\eta_n^N(I_A)] - \mu_n(I_A)|\\
         &\leq \sup_A \mathbb{E}_{\xi_n}|\eta_n^N(I_A) - \mu_n(I_A)|\leq \sup_A \mathbb{E}_{\xi_n}[|\eta_n^N(I_A) - \mu_n(I_A)|^2]^\frac{1}{2}
     \end{align*}
     We apply Theorem \ref{t:mainVar} with $f = I_A$, noting that $\Vert I_A - \mu_n(I_A)\Vert _{\sup}^2 \leq 1$. This yields $\Var(\eta_n^N(I_A))\le \epsilon^2$ and $\norm{\hat{\mu}_n - \mu_n}_{\textup{TV}} \le \epsilon$. 
 \end{proof}

\blu{Our results require a uniform bound on density ratios as well as on $f-\mu_n(f)$. We leave open the possibility of relaxing these assumptions.}

\section{Applications} \label{s:applications}
In this section we show that Assumption \ref{a} 
is satisfied by a sequence of commonly occurring measures and frequently used Markov kernels, and hence Theorem \ref{t:mainVar} can be applied.
More specifically, we show that condition 3 is satisfied by Langevin diffusion, Glauber Dynamics and Metropolis-Hastings chains (e.g., Metropolis random walk). We show that conditions 1, 2, and 4 are satisfied for sequences of measures defined by power tempering for Gaussian mixtures and Gaussian convolution of arbitrary mixtures. 

Functional inequalities including log-Sobolev inequalities for Glauber dynamics on various distributions have been the subject of intense study; see e.g., \cite{anari2022entropic,blanca2022mixing,chen2022localization}. 
For the Metropolis random walk, weaker inequalities, e.g., for $s$-conductance for log-concave distributions, are known \cite{dwivedi2019log}. 
It should also be noted that our results are for the continuous version of Glauber dynamics and Metropolis-Hastings chains. This is not a problem for an algorithmic result, as the continuous chain can be simulated by drawing transition times according to a Poisson process.

\subsection{Markov Process Decompositions} \blu{In this subsection we show that Assumption \ref{a} (3) is satisfied for Langevin diffusion, Glauber dynamics and Metropolis-Hastings.
Specifically, our bounds require that the Dirichlet form of the continuous process decomposes as $\langle f, \mathscr{L}_kf \rangle_{\mu_k} \leq \sum_{i=1}^{\blu{M_k}} \blu{w_k^{(i)}}\langle f, \mathscr{L}_{ki}f \rangle_{\mu_k^{(i)}}$. 
Later in Section \ref{s:applications} we connect our bounds back to the discretized process.} 
\subsubsection{Langevin Diffusion}\label{langevin diffusion}
We first show that condition (2) of Assumption \ref{a} is satisfied by 
Langevin diffusion, which is a Markov process with stationary distribution $p(x) \propto e^{-V(x)}$. 
Langevin diffusion is described by the stochastic differential equation
\begin{equation}
    dX_t = -\grad V(X_t)\,dt + \sqrt{2}\,dW_t,
\end{equation}
where $W_t$ is Brownian motion. The
time-discretized process with step size $h$ is given by 
\begin{equation}\label{LD}
    X_{t+1} = X_t - h\grad f(X_t) + \sqrt{2h}\cdot \xi_t,  \; \; \; \; \xi_t \sim N(0,I). 
\end{equation}
This can be thought of as a random walk where, at each time step, there is a drift in the direction of the gradient.
\begin{proposition}\label{LD:assumptions}
    For each $k$, let $P_k = (\Omega, \mathscr{L}_k)$ be Langevin diffusion with stationary measure $\mu_k= \sum \blu{w_k^{(i)}}\mu_{k}^{(i)}$. Then condition (2) of Assumptions \ref{a} holds with equality. Namely, 
    \begin{itemize}
        \item[2.] There exists a decomposition 
$$\langle f, \mathscr{L}_kf \rangle_{\mu_k} = \sum_{i=1}^{\blu{M_k}} \blu{w_k^{(i)}}\langle f, \mathscr{L}_{ki}f \rangle_{\mu_k^{(i)}}$$
where $\mathscr{L}_{ki}$ is the generator of 
Langevin diffusion $P_{ki}$ with stationary distribution $\mu_k^{(i)}$. 
    \end{itemize}
\end{proposition}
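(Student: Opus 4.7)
The plan is to exploit the fact that the Dirichlet form associated to Langevin diffusion is an integral of $|\nabla f|^2$ against the stationary measure, and is therefore \emph{linear} in the measure. This linearity passes the mixture decomposition through immediately, so in fact equality (not merely an upper bound) should hold.

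First I would recall the standard computation. For Langevin diffusion on $\mathbb{R}^d$ with stationary density $p \propto e^{-V}$, the infinitesimal generator acts on smooth test functions as $\mathscr{L}f = \Delta f - \nabla V \cdot \nabla f$. An integration by parts (using the fact that $e^{-V}\nabla f$ vanishes at infinity for $f\in C_c^\infty$, and extending by density to the appropriate domain) gives the carré du champ identity
\begin{equation*}
-\langle f, \mathscr{L}f\rangle_{\mu} \;=\; \int |\nabla f|^2 \, d\mu.
\end{equation*}
I would apply this to each relevant measure: if $\mathscr{L}_k$ is the generator of Langevin diffusion with stationary measure $\mu_k$, and $\mathscr{L}_{ki}$ is the generator of Langevin diffusion with stationary measure $\mu_k^{(i)}$, then
\begin{equation*}
-\langle f,\mathscr{L}_k f\rangle_{\mu_k} = \int |\nabla f|^2\, d\mu_k, \qquad -\langle f,\mathscr{L}_{ki} f\rangle_{\mu_k^{(i)}} = \int |\nabla f|^2\, d\mu_k^{(i)}.
\end{equation*}

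Next I would plug in the mixture decomposition $\mu_k = \sum_{i=1}^m w_i \mu_k^{(i)}$ and use linearity of the integral:
\begin{equation*}
-\langle f, \mathscr{L}_k f\rangle_{\mu_k} = \int |\nabla f|^2 \sum_{i=1}^m w_i \, d\mu_k^{(i)} = \sum_{i=1}^m w_i \int|\nabla f|^2 \,d\mu_k^{(i)} = -\sum_{i=1}^m w_i \langle f, \mathscr{L}_{ki} f\rangle_{\mu_k^{(i)}}.
\end{equation*}
Multiplying by $-1$ yields exactly the decomposition in Assumption~\ref{a}(2), in fact as an equality.

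There is no real obstacle here; the only subtlety to mention is the domain on which the carré du champ identity is valid, and that it suffices to verify the inequality on a dense class of test functions (e.g.\ $C_c^\infty$ or the domain of the Dirichlet form), with extension by closure. The key structural point worth emphasizing in the write-up is that the argument uses nothing about $V_k$ or the component densities beyond the fact that $|\nabla f|^2$ depends only on $f$ and not on the measure being integrated against; this is a Langevin-specific phenomenon and is why the decomposition holds with equality rather than merely inequality.
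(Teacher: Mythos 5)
Your proof is correct and takes essentially the same route as the paper: integration by parts gives $-\langle f, \mathscr{L}f\rangle_\mu = \int |\nabla f|^2\,d\mu$, and then linearity of the integral in the measure passes the mixture decomposition through as an equality. One small note in your favor: you are consistent about signs, whereas the paper states the carré du champ identity once without the minus sign and then uses it with the minus sign in the displayed chain, which is a typo.
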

\begin{proof}
Given Langevin diffusion $\mathscr{L}_k$ with stationary measure $\mu_k$ and density $p_k(x) \propto e^{-g_k(x)}$, we consider the mixture measure $\mu_k = \sum \blu{w_k^{(i)}}\mu_{k}^{(i)}$,  where the density of $\mu_{k}^{(i)}$ is given by 
\begin{equation*}
    p_{ki}(x) \propto \exp(-g_{ki}(x)).
\end{equation*}
For Langevin diffusion for a measure $\mu$ with density $\propto e^{-g}$, we have that $\mathscr{L} = \Delta f - \langle \grad g, \grad f\rangle$. Moreover, by integration by parts we can express the generator as $\langle f , \mathscr{L} f \rangle_\mu = \int_\Omega \norm{\grad f}^2 \mu(dx)$. Applying this definition we can show that condition (2) holds: 
\begin{equation*}
  \langle f, \mathscr{L}_k f\rangle_{\mu_k} = -\int_\Omega \norm{\grad f}^2 p_k(x)dx = -\int_\Omega \norm{\grad f}^2 \sum_{i=1}^{\blu{M_k}} \blu{w_k^{(i)}} p_{ki}(x)dx = \sum_{i=1}^{\blu{M_k}} \blu{w_k^{(i)}} \langle f, \mathscr{L}_{ki}f\rangle_{\mu_k^{(i)}}.
\end{equation*}
\end{proof}
\subsubsection{Glauber dynamics}\label{s:Glauber_dynamics}
We want to show that the decomposition in Assumptions \ref{a} is satisfied for Glauber dynamics for a mixture distribution $p(x) = \sum_{k=1}^M w_k p_k(x)$ on the hypercube $\Omega = \{0,1\}^d.$ Glauber dynamics is the Markov chain whose transition kernel is given as follows: given $x\in \Omega$, 
\begin{enumerate}
    \item Pick $i \in [1,d]$, with probability $\frac{1}{d}$. 
    \item Flip coordinate $x_i$, with probability $p(X_i=1-x_i | X_{-i}=x_{-i}).$
\end{enumerate}
Here, $x_{-i}$ denotes $(x_j)_{j\ne i}\in \{0,1\}^{[n]\backslash \{i\}}$. 
The above process yields the transition probabilities 
$$P(y \vert x) = \begin{cases} 
      \frac{1}{d}\sum_{i=1}^d p(X_i=x_i \vert X_{-i} = x_{-i}) & x=y\\
     \frac{1}{d}p(X_{-i} = 1-x_i\vert X_{-i} = x_{-i}) & \|x-y\|=1, x_i \neq y_i\\
      0 & \|x-y\| > 1 
   \end{cases}
$$
where $\|\cdot\|$ denotes the 1-norm (Hamming distance). The continuous-time Glauber dynamics is the continuous-time Markov process whose generator is given by $\mathscr L = P-I$. 
\begin{proposition}\label{glauber:assumptions}
    Let $P$ be the transition kernel and $\mathscr{L} = P - I$ be the  generator of Glauber dynamics on $\{0,1\}^d$ with stationary measure $p(x) = \sum_{k=1}^m w_kp_{k}(x)$. Then condition (2) of Assumptions \ref{a} holds with equality. Namely, 
    \begin{itemize}
        \item[2.] There exists a decomposition 
$$\langle f, \mathscr{L}f \rangle_{p} \leq \sum_{k=1}^m w_k\langle f, \mathscr{L}_{k}f \rangle_{p_k}$$
where $P_k$ is the transition kernel and $\mathscr{L}_{k} = P_k - I$ is the generator of 
Glauber dynamics with stationary distribution $p_k(x)$. 
    \end{itemize}
\end{proposition}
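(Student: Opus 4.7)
The plan is to express both sides as Dirichlet forms and reduce the claimed bound to a pointwise statement about a scalar concave function. First I use reversibility of the Glauber kernel with respect to $p$ to write
$$\mathscr{E}_p(f,f) = -\langle f, \mathscr{L} f\rangle_p = \tfrac{1}{2}\sum_{x,y \in \{0,1\}^d} p(x) P(y|x) \bigl(f(x) - f(y)\bigr)^2.$$
Only pairs $(x,y)$ with $y = x^{\oplus i}$ for some coordinate $i$ contribute, and for such a pair the edge weight simplifies symmetrically to
$$p(x) P(y|x) = \frac{1}{d} \cdot \frac{p(0, x_{-i})\, p(1, x_{-i})}{p(0, x_{-i}) + p(1, x_{-i})}.$$
Collecting terms by the unordered edge $\{(0,x_{-i}), (1,x_{-i})\}$,
$$\mathscr{E}_p(f,f) = \frac{1}{d}\sum_{i=1}^d \sum_{x_{-i}} \frac{p(0, x_{-i})\, p(1, x_{-i})}{p(0, x_{-i}) + p(1, x_{-i})} \bigl(f(1, x_{-i}) - f(0, x_{-i})\bigr)^2,$$
and the identical identity holds with $p$ replaced by any mixture component $p_k$.

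Next I observe that the displayed inequality $\langle f, \mathscr{L} f\rangle_p \leq \sum_k w_k \langle f, \mathscr{L}_k f\rangle_{p_k}$ is equivalent to $\mathscr{E}_p(f,f) \geq \sum_k w_k \mathscr{E}_{p_k}(f,f)$. Since the non-negative factor $(f(1, x_{-i}) - f(0, x_{-i}))^2$ appears identically in each term of both expansions, it suffices to prove the pointwise inequality
$$\frac{\bigl(\sum_k w_k a_k\bigr)\bigl(\sum_k w_k b_k\bigr)}{\sum_k w_k(a_k + b_k)} \;\geq\; \sum_{k=1}^m w_k \frac{a_k b_k}{a_k + b_k}$$
for every coordinate $i$ and every configuration $x_{-i}$, where $a_k = p_k(0, x_{-i})$ and $b_k = p_k(1, x_{-i})$.

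This is exactly Jensen's inequality for the parallel-resistance function $h(a,b) = \frac{ab}{a+b}$, which is jointly concave on the positive quadrant. A direct Hessian computation gives
$$\nabla^2 h(a,b) = \frac{-2}{(a+b)^3}\begin{pmatrix} b^2 & -ab \\ -ab & a^2 \end{pmatrix} = \frac{-2}{(a+b)^3}\binom{\phantom{-}b}{-a}\bigl(b,\; -a\bigr),$$
which is negative semi-definite. Hence $h$ is concave and Jensen's inequality applied to the atoms $(a_k, b_k)$ with weights $w_k$ yields the pointwise bound. Summing over $i$ and $x_{-i}$ then produces the Dirichlet-form inequality, which is the content of condition (2).

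The only technical nuisance is that some atoms may satisfy $a_k + b_k = 0$, i.e., the configuration $x_{-i}$ has zero conditional mass under $p_k$; those terms contribute zero on both sides and can be dropped before applying concavity on the strictly positive orthant. This is routine, so the only real content is the concavity of $h$, which is standard; no genuine obstacle arises.
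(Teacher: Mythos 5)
Your proof is correct, and its overall structure matches the paper's: both reduce the claim to comparing Dirichlet forms edge by edge on the hypercube, where the per-edge weight of Glauber dynamics is $\frac{1}{d}\,h\bigl(p(0,x_{-i}),p(1,x_{-i})\bigr)$ with $h(a,b)=\frac{ab}{a+b}$, so that the whole statement boils down to the scalar inequality $h\bigl(\sum_k w_k a_k,\sum_k w_k b_k\bigr)\ge \sum_k w_k\,h(a_k,b_k)$. The difference is in how that scalar inequality is established: the paper normalizes each component's pair to $\bigl(\tfrac{a_k}{a_k+b_k},\tfrac{b_k}{a_k+b_k}\bigr)$, observes these are oppositely sorted, and applies the weighted Chebyshev sum inequality with weights proportional to $w_k(a_k+b_k)$, whereas you invoke joint concavity of $h$ (via the rank-one negative semi-definite Hessian) and Jensen with the mixture weights. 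Your route is arguably cleaner and slightly more robust: it avoids the sorting observation, handles degenerate edges with $a_k+b_k=0$ transparently, and since $h$ is positively $1$-homogeneous (hence superadditive) it would not even require the weights to sum to one. Both arguments deliver exactly the inequality stated in condition (2); note that, like the paper's own proof, yours establishes the inequality rather than the literal "with equality" phrasing in the proposition's preamble, which is what is actually needed for Assumption~\ref{a}.
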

\begin{proof}
Let $\flipi xi$ denote $x$ with the $i$th coordinate flipped. 
We consider the Dirichlet form for a reversible Markov chain, 
\begin{align*}
    \mathscr{E}_P(f,f) \vcentcolon &= \frac{1}{2}\sum_{x,y\in \Omega}\bigg(f(x) - f(y)\bigg)^2p(x)P(y\vert x)\\
     &=\frac{1}{2}\sum_{\substack{\|x-y\|= 1 \\ x_i \neq y_i \\ 1 \leq i \leq d}}\bigg(f(x) - f(y)\bigg)^2p(x)  \frac{1}{d}P(X_{-i} = y_i\vert X_{-i} = x_{-i})\\
    &= \frac{1}{2}\frac{1}{d}\sum_{i=1}^d\bigg(f(x) - f(\flipi xi)\bigg)^2\sum_kw_kp_k(x)\frac{\sum_k w_k p_k(\flipi{x}{i})}{\sum_k w_k \big( p_k(\flipi{x}{i}) + p_k(x)\big)}\\
    &=\frac{1}{2}\frac{1}{d}\sum_{i=1}^d \bigg(f(x) - f(\flipi xi)\bigg)^2\bigg(\frac{\sum_k w_k p_k(x)}{\sum w_k \big( p_k(\flipi{x}{i}) + p_k(x)\big)}\frac{\sum_k w_k p_k(\flipi{x}{i})}{\sum_k w_k \big( p_k(\flipi{x}{i}) + p_k(x)\big)}\bigg)\\
    &\quad\cdot\sum_k w_k \big( p_k(\flipi{x}{i}) + p_k(x)\big)\\
    \shortintertext{Defining sequences $a_k = \frac{w_k p_k(x)}{w_k\big( p_k(\flipi{x}{i}) + p_k(x)\big)}$, $b_k = \frac{w_k p_k(\flipi{x}{i})}{w_k\big( p_k(\flipi{x}{i}) + p_k(x)\big)}$ with weights $\alpha_k = \frac{w_k \big( p_k(\flipi{x}{i}) + p_k(x)\big)}{\sum_k w_k \big( p_k(\flipi{x}{i}) + p_k(x)\big)}$, noting that $a_k$, $b_k$ are oppositely sorted sequences, we can apply the weighted version of Chebyshev's sum inequality to get, }
    &\geq \frac{1}{2}\frac{1}{d}\sum_{i=1}^d \bigg(f(x) - f(\flipi xi)\bigg)^2\sum_k \frac{w_k \big( p_k(\flipi{x}{i}) + p_k(x)\big)}{\sum_k w_k \big( p_k(\flipi{x}{i}) + p_k(x)\big)}\cdot\frac{p_k(\flipi{x}{i})}{p_k(x)+p_k(\flipi{x}{i})}\\
    &\quad\cdot\frac{p_k(x)}{p_k(x)+p_k(\flipi{x}{i})}\cdot \sum_k w_k \big( p_k(\flipi{x}{i}) + p_k(x)\big)\\
    &=\frac{1}{2}\frac{1}{d}\sum_{i=1}^d \bigg(f(x) - f(\flipi xi)\bigg)^2\sum_k \frac{p_k(\flipi{x}{i})}{p_k(x)+p_k(\flipi{x}{i})}\cdot w_kp_k(x)\\
    &= \sum_k w_k \cdot \frac{1}{2}\sum_{i=1}^d \bigg(f(x) - f(\flipi xi)\bigg)^2p_k(x)P_k(X_{i} = 1-x_{i}\vert X_{-i} = x_{-i})\\
    &= \sum_k w_k \mathscr{E}_{P_k}(f,f).
\end{align*}
By \eqref{dirichlet:discrete} we have that $\langle f, \mathscr{L}f \rangle_{p} \leq \sum_{k=1}^m w_k\langle f, \mathscr{L}_{k}f \rangle_{p_k}$ as desired. 
\end{proof}
\subsubsection{Metropolis-Hastings chain}\label{s:Metropolis_chain} We will show that condition (2) of Assumption \ref{a} is met by Markov chains of Metropolis-Hastings type. A Metropolis-Hastings Markov chain is a discrete-time Markov process that converge to a stationary distribution $\pi$ by iterating two steps. First, given an initial point $x$, a step $y$ is proposed according to the conditional distribution $P(y|x)$ (called the proposal distribution). The proposal step $y$ is accepted with probability
$$ \alpha(x,y) \vcentcolon= \min\bigg(1, \frac{\pi(y)P(x|y)}{\pi(x)P(y|x)}\bigg).$$
The transition kernel $Q(y|x)$ for $y\ne x$ is given by
$$Q(y|x) \vcentcolon= P(y|x)\alpha(x,y).$$
For example, on $\mathbb{R}^d$, $P(y|x)$ is often taken to be a Gaussian distribution centered at $x$, or uniform over a ball or ellipsoid (and the chain is called the Metropolis random walk).
Again, this can be converted to a continuous-time process which can be simulated algorithmically.
\begin{proposition}\label{MH:assumptions}
    Let $Q$ be the transition kernel and $\mathscr{L} = Q-I$ be the generator of the Metropolis-Hastings chain on $\Omega$ with proposal $P$ and stationary measure $\pi(x) = \sum_k w_k\pi_{k}(x)$. Then condition (2) of Assumptions \ref{a} holds. Namely, 
    \begin{itemize}
        \item[2.] There exists a decomposition 
$$\langle f, \mathscr{L}f \rangle_{\pi} \leq \sum_{k=1}^m w_k\langle f, \mathscr{L}_{k}f \rangle_{\pi_k}$$
where $Q_k$ is the transition kernel and $\mathscr{L}_{k} = Q_k - I$ is the generator of the Metropolis-Hastings chain on $\Omega$ with proposal $P$ and stationary distribution $\pi_k(x)$. 
    \end{itemize}
\end{proposition}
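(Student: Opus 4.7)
The plan is to mirror the Glauber dynamics argument used in Proposition \ref{glauber:assumptions}, working at the level of Dirichlet forms. Since the Metropolis-Hastings kernel $Q$ is reversible with respect to $\pi$, I would express
\[
-\langle f, \mathscr{L} f\rangle_\pi \;=\; \mathscr{E}_Q(f,f) \;=\; \tfrac{1}{2}\iint (f(y)-f(x))^2 \,\pi(x)\, Q(y\mid x)\,dy\,dx,
\]
and analogously write $-\langle f, \mathscr{L}_k f\rangle_{\pi_k} = \mathscr{E}_{Q_k}(f,f)$, where $Q_k$ uses the same proposal $P$ but has stationary distribution $\pi_k$. After negating, the claim reduces to the reverse inequality for Dirichlet forms, $\mathscr{E}_Q(f,f) \ge \sum_k w_k \mathscr{E}_{Q_k}(f,f)$.

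The key observation is that the Metropolis-Hastings acceptance probability collapses $\pi(x) Q(y\mid x)$ into a symmetric minimum: for $y \neq x$,
\[
\pi(x) Q(y\mid x) \;=\; \pi(x) P(y\mid x)\min\!\left(1,\, \tfrac{\pi(y)P(x\mid y)}{\pi(x)P(y\mid x)}\right) \;=\; \min\!\big(\pi(x)P(y\mid x),\,\pi(y)P(x\mid y)\big),
\]
and the analogous identity holds with $(\pi_k, Q_k)$ in place of $(\pi, Q)$, since the proposal $P$ is common to all the chains. At this point the argument is a one-line application of the sub-additivity of the minimum: for nonnegative scalars $\{a_k\},\{b_k\}$ and weights $\{w_k\}$,
\[
\sum_k w_k \min(a_k, b_k) \;\le\; \min\!\left(\sum_k w_k a_k,\, \sum_k w_k b_k\right).
\]
Applying this with $a_k = \pi_k(x)P(y\mid x)$ and $b_k = \pi_k(y)P(x\mid y)$, and then using $\sum_k w_k \pi_k = \pi$ inside the minimum on the right-hand side, would give the pointwise bound $\sum_k w_k\, \pi_k(x) Q_k(y\mid x) \le \pi(x) Q(y\mid x)$. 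Integrating this against $\tfrac12 (f(y)-f(x))^2\,dy\,dx$ delivers $\sum_k w_k \mathscr{E}_{Q_k}(f,f)\le \mathscr{E}_Q(f,f)$, which upon negation is the desired decomposition.

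I do not anticipate any serious obstacle: in fact, the argument is cleaner than the Glauber dynamics case because the minimum already appears explicitly in the MH acceptance ratio, so the weighted Chebyshev sum inequality detour used for Glauber is not needed. The only minor variant is that the proposition is stated for a general state space $\Omega$: for discrete $\Omega$ the sums are literal, while for continuous $\Omega$ one integrates against the proposal density $P(y\mid x)\,dy$, with the rejection atom at $y=x$ making no contribution because of the $(f(y)-f(x))^2$ factor. The logical structure is identical in both settings.
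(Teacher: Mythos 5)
Your proof is correct and follows essentially the same route as the paper: both reduce to the pointwise identity $\pi(x)Q(y\mid x)=\min\bigl(\pi(x)P(y\mid x),\pi(y)P(x\mid y)\bigr)$ and then apply the concavity (superadditivity) of $\min$ to pull the mixture weights inside, before reassembling the component Dirichlet forms. One small terminological note: the inequality $\sum_k w_k\min(a_k,b_k)\le\min\bigl(\sum_k w_ka_k,\sum_k w_kb_k\bigr)$ is the \emph{super}additivity of the minimum (equivalently, concavity), not sub-additivity, though the statement you wrote is correct.
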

\begin{proof}
    For a reversible Markov chain we have that, 
    \begin{align*}
          \mathscr{E}_Q(f,f) \vcentcolon &= \frac{1}{2}\sum_{x,y\in \Omega}\bigg(f(x) - f(y)\bigg)^2\pi(x)Q(y|x)\\
          &= \frac{1}{2}\sum_{x,y\in \Omega}\bigg(f(x) - f(y)\bigg)^2\pi(x)\min\bigg\{1, \frac{\pi(y)P(x|y)}{\pi(x)P(y|x)}\bigg\}P(y|x)\\
          &= \frac{1}{2}\sum_{x,y\in \Omega}\bigg(f(x) - f(y)\bigg)^2\min\bigg\{\sum_kw_k\pi_k(x)P(y|x), \sum_kw_k\pi_k(y)P(x|y)\bigg\}\\
          &\geq \frac{1}{2}\sum_{x,y\in \Omega}\bigg(f(x) - f(y)\bigg)^2\sum_kw_k \min\bigg\{\pi_k(x)P(y|x), \pi_k(y)P(x|y)\bigg\}\\
          &= \sum_kw_k\frac{1}{2}\sum_{x,y\in \Omega}\bigg(f(x) - f(y)\bigg)^2\min\bigg\{1, \frac{\pi_k(y)P(x|y)}{\pi_k(x)P(y|x)}\bigg\}\pi_k(x)P(y|x)\\
          &=\sum_k w_k \mathscr{E}_{Q_k}(f,f)
    \end{align*}
    By definition \ref{dirichlet:discrete} we have that $\langle f, \mathscr{L} f \rangle_\pi \leq \sum_k w_k \langle f , \mathscr{L}_k f \rangle_{\pi_k}.$
\end{proof}
\subsection{Target Distributions and Measure Sequences} In this subsection we show that our main Theorem \ref{t:main} is applicable to some representative sequences of multimodal distributions. 
We demonstrate that two standard methods for constructing the sequence of measures, powering tempering and Gaussian convolution, satisfy Assumptions \ref{a}.

In each setting we show that its possible to finitely bound the density ratio $\gamma$ with polynomial dependence on the parameters. 
Additionally, we show that in the case of Langevin dynamics, in both settings, it is possible to bound the local LSI constant $C_k^*$. 

\subsubsection{Gaussian Mixtures and Power Tempering} \label{pt}
\blu{We now apply our results to a concrete setting where the target distribution $\pi(x) = \sum_k \alpha_k \cdot q(x; \mu_k, \Sigma_k)$ is a mixture of Gaussians (with means $\mu_k$ and variances $\Sigma_k$). This verifies that SMC is efficient in a setting where simulated tempering is known to succeed \cite{ge2020simulated}, with a different proof. We will consider the sequence of measures $\pi_1, \dots, \pi_n$ defined by power tempering $\pi_i(x) \propto \pi(x)^{\beta_i}$ for a chosen temperature ladder $\beta_1, \dots, \beta_n$. Power tempering is a commonly used and well studied method \cite{Marinari_1992,neal2001annealed,ge2020simulated} for defining a sequence of distributions and in this setting, the rest of our Assumptions \ref{a} can be easily verified.}

\begin{proposition}
    Let the target distribution be a mixture of $M$ Gaussians 
    $$\pi(x) = \sum_{k=1}^M \alpha_kq(x; \mu_k, \Sigma_k), \qquad \sum_{k=1}^M \alpha_k = 1,\qquad \alpha_k> 0,$$
    where $q(x; \mu_k, \Sigma_k)$ denotes the density of $\mathcal{N}(\mu_k, \Sigma_k).$ Let $0 < \beta_1 < \dots < \beta_n = 1$ be a temperature ladder and define the sequence of measures
    $$\pi_i(x) \propto \tilde{\pi}_i(x) \vcentcolon= \pi(x)^{\beta_i}.$$
    Then the following properties hold: 
    \begin{enumerate}
        \item (Density Ratio Bound) Assumption \ref{a} (2) is satisfied with $\frac{d\pi_i}{d\pi_{i-1}} \leq \gamma_i$, where for $\Delta \beta = \beta_i - \beta_{i-1}$,
        $$\gamma_i = \frac{1}{\min_k \alpha_k }\left(\frac{\beta_i}{\beta_{i-1}} \right)^{d/2}\left(\frac{\max_k|\Sigma_k|}{\min_k|\Sigma_k|}\right)^{\Delta\beta/2}.$$
        \item (Local LSI Constants) Each $\pi_i$ is expressible as a mixture $\pi_i = \sum w_k^{(i)}\nu_k^{(i)}$, where the components $\nu_k^{(i)}$ are defined by 
        $$\nu^{(i)}_k(x) \propto \frac{\pi(x)^{\beta_i}}{\sum_{j=1}^M\alpha_j q(x; \mu_j, \Sigma_j)^{\beta_i}}q(x;\mu_k,\Sigma_k)^{\beta_i}.$$
        Then Assumption \ref{a} (4) holds for each $\nu_k^{(i)}$ with
        $$C_{\textup{LSI}}(\nu_k^{(i)}) \leq \frac{1}{\min_j \alpha_j}\cdot \frac{\lambda_{\max}(\Sigma_k)}{\beta_i}.$$
        \item (Weight Lower Bound) The normalized mixture weights satisfy 
        $$\min_{i,k} w_{k}^{(i)} \geq \min_{i,k}\frac{\int q(x;\mu_k,\Sigma_k)^{\beta_i}dx}{\sum_j\alpha_j\int q(x;\mu_k,\Sigma_j)^{\beta_i}dx}\min_k\alpha_k^2 .$$
       If $\Sigma_j = \Sigma_k$, for all $j,k \in [1,M]$, then this reduces to $\min_{i,k} w_{k}^{(i)} \geq \min_k\alpha_k^2$.
    \end{enumerate}
\end{proposition}
\begin{proof}
Let $Z_k = \int \pi(x)^{\beta_k} dx$. To bound the ratio we consider 
$$\frac{\pi_i(x)}{\pi_{i-1}(x)} = \frac{Z_{i-1}}{Z_i}\frac{\pi(x)^{\beta_i}}{\pi(x)^{\beta_{i-1}}} = \frac{Z_{i-1}}{Z_i} \pi(x)^{\Delta\beta}.$$
Then the density $\pi(x)$ can be bounded by 
\begin{align}
\label{e:gm-eq-1}
\pi(x) &= \sum_{k=1}^M \alpha_k q_k(x) \leq \max_k \left\| q_k\right\|_\infty = \max_k \frac{1}{(2\pi)^{d/2}|\Sigma_k|^{1/2}}.
\end{align}
To bound the normalizing constants, using that $\beta_i \in (0,1)$, we have by the power mean inequality that
$$Z_i = \int \left(\sum_{k=1}^M \alpha_k q_k(x) \right)^{\beta_i}dx \geq \sum_{k=1}^M \alpha_k \int q_k(x)^{\beta_i} dx.$$
We find an upper bound by 
$$\pi(x)^{\beta_i} \leq \sum_k\alpha_k^{\beta_i}q_k(x)^{\beta_i} = \sum_k \frac{\alpha_k}{\alpha_k^{1-\beta_i}}q_k(x)^{\beta_i} \leq \frac{1}{\min_k \alpha_k }\sum_k \alpha_kq_k(x)^{\beta_i}.$$
Then 
$$Z_i = \int\pi(x)^{\beta_i} dx\leq \frac{1}{\min_k \alpha_k }\sum_{k=1}^M \alpha_k\int q_k(x)^{\beta_i}dx.$$
Therefore, using that $\int q_k(x)^{\beta_i} dx = \frac{(2\pi)^{d(1-\beta_i)/2}}{\beta_i^{d/2}}|\Sigma_k|^{(1-\beta_i)/2}$ the ratio $\frac{Z_{i-1}}{Z_i}$ is bounded by
\begin{align}
\nonumber
\frac{Z_{i-1}}{Z_i} &\leq \frac{1}{\min_k \alpha_k }\frac{\sum_{k=1}^M \alpha_k\int q_k(x)^{\beta_{i-1}}dx}{\sum_{k=1}^M \alpha_k \int q_k(x)^{\beta_i}dx} \leq \frac{1}{\min_k \alpha_k }\cdot\max_k\frac{\int q_k(x)^{\beta_{i-1}}dx}{\int q_k(x)^{\beta_{i}}dx} \\
&\leq \frac{1}{\min_k \alpha_k }\left(\frac{\beta_i}{\beta_{i-1}} \right)^{d/2}(2\pi)^{d\Delta\beta/2}\max_k|\Sigma_k|^{\Delta\beta/2}.
\label{e:gm-eq-2}
\end{align}
Combining \eqref{e:gm-eq-1} and \eqref{e:gm-eq-2} yields the first part. 

Next we show an upper bound on the LSI constant. Consider the distribution ${\pi}_{\textup{mix}, i}(x) \propto \sum_{k=1}^M \alpha_k q(x; \mu_k, \Sigma_k)^{\beta_i}=: \tilde{\pi}_{\textup{mix},i}$. It's shown in  \cite[Lemma 7.3]{ge2020simulated} that 
 \begin{equation}\label{eq: lem 7.3}
 \tilde{\pi}_{\textup{mix}, i}(x) \leq \tilde{\pi}_i(x) \leq \frac{1}{\min_k \alpha_k}\tilde{\pi}_{\textup{mix}, i}(x). 
 \end{equation}
 Next, we will consider a hypothetical mixture $\tilde{\pi}_i(x) = \sum_k \alpha_{k}^{(i)} \tilde{\nu}_{k}^{(i)}(x)$ for some $\alpha_k^{(i)}$, $\tilde{\nu}_k^{(i)}$. 
 We choose $\alpha_k^{(i)} = \alpha_k$ and $\tilde{\nu}_{k}^{(i)}=\frac{\tilde{\pi}_i(x)}{\tilde{\pi}_{\textup{mix}, i}(x)}q(x;\mu_k,\Sigma_k)^{\beta_i}$. 
 Then considering $\tilde{\nu}_k^{(i)}$, by (\ref{eq: lem 7.3}), 
 \begin{align*}
     q(x;\mu_k,\Sigma_k)^{\beta_i} \leq \frac{\tilde{\pi}_i(x)}{\tilde{\pi}_{\textup{mix}, i}(x)}q(x;\mu_k,\Sigma_k)^{\beta_i}\leq \frac{1}{\min_k \alpha_k}q(x;\mu_k,\Sigma_k)^{\beta_i}.
 \end{align*}
 Then by Holley-Stroock (\cite[Proposition 2.3.1]{chewi2023}) we have that 
 $$C_{\textup{LSI}}(\nu_k^{(i)}) \leq \frac{1}{\min_k \alpha_k}C_{\textup{LSI}}(q(x;\mu_k,\Sigma_k)^{\beta_i}),$$
 where we use $C_{\textup{LSI}}(q)$ to denote the log-Sobolev constant of the normalized density $q/\int q$. 
 Since $q(x; \mu_k, \Sigma_k)^{\beta_i} = C \cdot q(x; \mu_k, \frac{\Sigma_k}{\beta_i})$ for some constant $C$, the Bakry-Emery criterion says that $C_{\textup{LSI}}(q(x;\mu_k,\Sigma_k)^{\beta_i}) \leq \frac{\lambda_{\max}(\Sigma_k)}{\beta_i}$ hence 
 $$C_{\textup{LSI}}(\nu_k^{(i)}) \leq \frac{1}{\min_k \alpha_k}\cdot \frac{\lambda_{\max}(\Sigma_k)}{\beta_i}.$$
 Lastly, we need to show that given our choice of component functions $\nu_j^{(i)}$, the weights corresponding to the normalized components don't become too small.
We have that 
\begin{align*}
    w_j^{(i)} &= \frac{\alpha_j\int \tilde{\nu}_j^{(i)}(x)dx}{\sum_k\alpha_k\int \tilde{\nu}_k^{(i)}(x)dx} = \frac{\alpha_j\int \frac{\tilde{\pi}_i(x)}{\tilde{\pi}_{\textup{mix}, i}(x)}q(x;\mu_j,\Sigma_j)^{\beta_i}dx}{\sum_k\alpha_k\int \frac{\tilde{\pi}_i(x)}{\tilde{\pi}_{\textup{mix}, i}(x)}q(x;\mu_k,\Sigma_k)^{\beta_i}dx}\\
    \shortintertext{then by (\ref{eq: lem 7.3}),}
    &\geq\frac{\alpha_j\int q(x;\mu_j,\Sigma_j)^{\beta_i}dx}{\frac{1}{\min_k\alpha_k}\sum_k\alpha_k\int q(x;\mu_k,\Sigma_k)^{\beta_i}dx}=\frac{\alpha_j\int q(x;\mu_j,\Sigma_j)^{\beta_i}dx}{\sum_k\alpha_k\int q(x;\mu_k,\Sigma_k)^{\beta_i}dx}\min_k\alpha_k 
\end{align*}
Therefore, in the case of $\Sigma_k = \Sigma_j$ for all $j,k$, it follows that $\min_{i,k} w_k^{(i)} \geq (\min_k \alpha_k)^2$.
\end{proof}
The following then follows directly from Corollary \ref{t:main}. 
\begin{corollary}(Total Variation Bounds for Power Tempering) 
Let $\{\xi_n^{(1)}, \dots ,\xi_n^{(N)}\} \sim \hat{\mu}_n$ be the particles produced by SMC with target distribution $\mu_n$. 
Let the target distribution be a mixture of $M$ Gaussians 
    $$\pi(x) = \sum_{k=1}^M \alpha_kq(x; \mu_k, \Sigma), \qquad \sum_{k=1}^M \alpha_k = 1,\qquad \alpha_k> 0,$$
    where $q(x; \mu_k, \Sigma)$ denotes the density $\mathcal{N}(\mu_k, \Sigma)$. Let $0 < \beta_1 < \dots < \beta_n = 1$ be a temperature ladder and define the sequence of measures $\pi_i(x) \propto \tilde{\pi}_i(x) \vcentcolon= \pi(x)^{\beta_i}.$ Then choosing 
    \begin{enumerate}
        \item $N \geq n \max\left\{\frac{16}{\epsilon^2\min_k \alpha_k^3 }\max_i\left(\frac{\beta_i}{\beta_{i-1}} \right)^\frac{d}{2}
        , \frac{128M^\frac{7}{4}}{\min_k\alpha_k^\frac{65}{8}}\left(\frac{\beta_i}{\beta_{i-1}} \right)^\frac{35d}{16}
        \right\}$
        \item
        $t_k\ge \frac{2C_k^*}{\min_k \alpha_k^7 }\max_i\left(\frac{\beta_i}{\beta_{i-1}} \right)^{\frac{7d}{2}}$
    \end{enumerate}
        yields 
    \begin{equation*}
       \Vert \hat{\mu}_n - \mu_n \Vert _{\textup{TV}} \leq \epsilon.
    \end{equation*}
    Choosing $\fc{\be_i}{\be_{i-1}}=1+O\pf{1}{d}$ makes this polynomial in all parameters.
\end{corollary}
\color{black}
\subsubsection{Gaussian Convolutions and Noise Contrastive Estimation}\label{gc}
Noise Contrastive Estimation (NCE) is a estimation method for parametrized statistical models \cite{JMLR:v13:gutmann12a} which has recently been adapted in machine learning to train energy-based models. Given a set of points $\{x_n^{(i)}\}_{i=1}^N$ drawn from $\mu_n$, NCE produces an estimate of the data distribution $\mu_n$ by comparing these data points to points $\{x_1^{(i)}\}_{i=1}^N$ generated from some known noise distribution $\mu_1$ and estimating $\frac{d\mu_n}{d\mu_1}$. 
When the sampled points $\{x_1^{(i)}\}_{i=1}^N$ and $\{x_n^{(i)}\}_{i=1}^N$ have little overlap, NCE can provide poor estimates of $\mu_n$. Telescoping Ratio Estimation (TRE) is an extension of NCE that circumvents this issue by interpolating between the two sets of points to obtain the sets  $\{x_k^{(i)}\}_{i=1}^N$, $k=2, \dots, n-1$ \cite{rhodes2020telescoping}. NCE is applied between sets  $\{x_k^{(i)}\}_{i=1}^N$ and  $\{x_{k+1}^{(i)}\}_{i=1}^N$ to learn $\frac{d\mu_{k+1}}{d\mu_k}$ and hence $\mu_{k+1}$ inductively, for $k =1, \dots, n$.

\blu{We now apply our results in a concrete setting where our sequence of distributions $\mu_k$ is constructed via Gaussian convolution. This is a common technique in diffusion models and score-based generative modeling and also fits naturally in the NCE/TRE framework—since intermediate point sets can be constructed by adding Gaussian noise.}

\blu{We further specialize to the case where $\mu_n$ is a mixture, $\mu_n = \sum_{i=1}^M w_i\mu_n^{(i)}$. This provides a non-trivial example that satisfies our Assumptions \ref{a}:
\begin{enumerate}
    \item It illustrates a special case of our framework where the mixture weights are constant, as $\mu_k = \mu_n * \varphi_k = \sum_i w_i(\mu_n^{(i)}*\varphi_k)$. 
    While the density ratio $\bar{g}_{k,k+1}$ is not analytically computable (a challenge which we address in remark \ref{r:mu1}), it can be bounded by a uniform constant $\lambda$. This is shown explicitly in Proposition \ref{ratio_example}. 
    \item It provides a concrete setting where Assumptions \ref{a} part 4 (LSI constants) can be explicitly verified. This is shown in a more general setting (Lemma \ref{lem:PI}), but in our special setting, since the convolution of Gaussians is Gaussian, the LSI can be explicitly bounded. 
\end{enumerate}}

The following proposition shows how intermediate point sets can be constructed by adding Gaussian noise. 

\begin{proposition}\label{waymark} Let $\{x_n^{(i)}\}_{i=1}^N$ be from some data distribution with unknown measure $\pi_n$ and define
$$x_{k-1}^{(i)} \vcentcolon = x_{k}^{(i)} + \sigma \xi, \; \xi \sim N(0,1),$$
for $k= 2, \dots, n$. Then the measure $\mu_k$ of $x_k^{(i)}$ is given by 
$$\mu_k = \mu_n*\varphi_k,$$
where $\varphi_k \sim N(0,\sum_{i=k+1}^n\sigma_i^2)$.
\end{proposition}
\begin{proof}
    Follows by additivity of variance for Gaussians.
\end{proof}

As defined in Proposition \ref{waymark}, $\varphi_1$ is a high variance Gaussian, which when convolved with a measure $\mu_n$ yields a measure that is approximately Gaussian itself. Therefore, 
we can approximately sample from $\mu_n*\varphi_1$; see Remark \ref{r:mu1} for details.
We now show that, given a target distribution $\mu_n$, our main result can be applied to the sequence of distributions $\mu_1,\dots,\mu_n$ where $\mu_k$, is defined as in Proposition \ref{waymark}.
\begin{proposition}\label{ratio_example} Let $\mu_n = \sum_i w_i \mu_n^{(i)}$ be the target measure and let 
    \begin{equation*}
        \varphi_k(x) \propto e^{-\beta_k\frac{\norm{x}^2}{2\sigma^2}},
    \end{equation*}
    with $0 <\beta_1< \dots < \beta_{n-1}$.

    Then condition (1) of Theorem \ref{t:main} holds with
    
    $$\frac{\mu_n*\varphi_k(x)}{\mu_n*\varphi_{k-1}(x)} \leq \gamma = \bigg(\frac{\beta_k}{\beta_{k-1}}\bigg)^\frac{d}{2}.$$
\end{proposition}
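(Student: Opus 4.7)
The plan is to reduce the convolution ratio to a pointwise ratio of Gaussian densities. First, I would write $\varphi_k$ in fully normalized form as
\begin{equation*}
\varphi_k(x) = \pa{\frac{\beta_k}{2\pi\sigma^2}}^{d/2} \exp\pa{-\frac{\beta_k \norm{x}^2}{2\sigma^2}},
\end{equation*}
so that the constant of proportionality is explicit. The key observation is that for any fixed $x$, the pointwise ratio of consecutive densities satisfies
\begin{equation*}
\frac{\varphi_k(x)}{\varphi_{k-1}(x)} = \pa{\frac{\beta_k}{\beta_{k-1}}}^{d/2} \exp\pa{-\frac{(\beta_k-\beta_{k-1})\norm{x}^2}{2\sigma^2}} \leq \pa{\frac{\beta_k}{\beta_{k-1}}}^{d/2},
\end{equation*}
using $\beta_k > \beta_{k-1}$ so that the exponential factor is at most $1$.

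Next I would push this pointwise inequality through the convolution. Writing each convolution as an integral,
\begin{equation*}
\mu_n * \varphi_k(x) = \int \varphi_k(x-y)\,\mu_n(dy) \leq \pa{\frac{\beta_k}{\beta_{k-1}}}^{d/2} \int \varphi_{k-1}(x-y)\,\mu_n(dy) = \pa{\frac{\beta_k}{\beta_{k-1}}}^{d/2} \mu_n * \varphi_{k-1}(x),
\end{equation*}
and dividing yields the claimed bound. I do not anticipate any real obstacle: because the Gaussian densities have a common quadratic form (modulo the scalar $\beta_k$), the pointwise bound on the ratio is immediate, and nonnegativity of the Gaussian kernel is enough to lift it to the convolved measures via monotonicity of the integral.
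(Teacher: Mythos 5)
Your proof is correct and takes essentially the same approach as the paper: both arguments reduce to bounding $\exp\big(-(\beta_k-\beta_{k-1})\norm{x-y}^2/(2\sigma^2)\big) \leq 1$ and accounting for the ratio of normalizing constants. Your version is slightly cleaner in that it isolates the pointwise bound $\varphi_k \leq (\beta_k/\beta_{k-1})^{d/2}\varphi_{k-1}$ first and then uses nonnegativity of the kernel, whereas the paper performs the same manipulation inline within the convolution integrals (and unnecessarily carries the mixture decomposition of $\mu_n$ through the calculation, which plays no role).
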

\begin{proof}
    Consider the following: 
    \begin{align*}
       \frac{\mu_n*\varphi_k(x)}{\mu_n*\varphi_{k-1}(x)} &= \frac{\sqrt{\beta_k^d}\sum_{i=1}^m w_i\int d\mu_n^{(i)}(y) \exp(-\beta_k\frac{\norm{x-y}^2}{2\sigma^2})dy}{\sqrt{\beta_{k-1}^d}\sum_{i=1}^mw_i \int d\mu_n^{(i)}(y) \exp(-\beta_{k-1}\frac{\norm{x-y}^2}{2\sigma^2})dy}\\
        &=\frac{\sqrt{\beta_k^d}\sum_{i=1}^m w_i\int d\mu_n^{(i)}(y) \exp(-(\beta_{k-1}+\beta_k-\beta_{k-1})\frac{\norm{x-y}^2}{2\sigma^2})dy}{\sqrt{\beta_{k-1}^d}\sum_{i=1}^mw_i \int d\mu_n^{(i)}(y) \exp(-\beta_{k-1}\frac{\norm{x-y}^2}{2\sigma^2})dy}\\
    &= \frac{\sqrt{\beta_k^d}\sum_{i=1}^m w_i\int d\mu_n^{(i)}(y) \exp(-\beta_{k-1}\frac{\norm{x-y}^2}{2\sigma^2})\exp(-(\beta_k-\beta_{k-1})\frac{\norm{x-y}^2}{2\sigma^2})dy}{\sqrt{\beta_{k-1}^d}\sum_{i=1}^mw_i \int d\mu_n^{(i)}(y) \exp(-\beta_{k-1}\frac{\norm{x-y}^2}{2\sigma^2})dy}\\
    &\leq \frac{\sqrt{\beta_k^d}\sum_{i=1}^m w_i\int d\mu_n^{(i)}(y) \exp(-\beta_{k-1}\frac{\norm{x-y}^2}{2\sigma^2})dy}{\sqrt{\beta_{k-1}^d}\sum_{i=1}^mw_i \int d\mu_n^{(i)}(y) \exp(-\beta_{k-1}\frac{\norm{x-y}^2}{2\sigma^2})dy}\\
    &= \bigg(\frac{\beta_k}{\beta_{k-1}}\bigg)^\frac{d}{2}.
    \end{align*}
\end{proof}

\subsubsection{TV-bounds for Sampling from Telescoping Ratio Estimation} We combine our results form Section \ref{langevin diffusion} and Section \ref{gc} to find TV-bounds for SMC applied to a sequence of distributions $\mu_1, \dots, \mu_n$ learnt perfectly via TRE. We further show that the dependence on the log Sobolev constants in our TV-bound from Theorem \ref{t:main} only depends on the local log Sobolev constants of the target mixture measure $\mu_n = \sum_i w_i \mu_n^{(i)}$. 

 \begin{lem}[{\cite{chafai2004entropies}, \cite[Proposition 2.3.7]{chewi2023}}] \label{lem:PI} Suppose that $\pi = \pi_1 * \pi_2$ where $\pi_1, \pi_2$ satisfy a Poincar\'e inequality. Then, $\pi$ also satisfies the corresponding functional inequality 
$$C_{\textup{LSI}}(\pi) \leq C_{\textup{LSI}}(\pi_1) + C_{\textup{LSI}}(\pi_2),$$
where $C_{\textup{LSI}}(\pi)$ is the log Sobolev constant of $\pi$.
\end{lem}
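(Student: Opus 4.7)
The plan is to lift the problem from $\pi$ to the product space $\pi_1\otimes\pi_2$ via the convolution structure, then exploit tensorization of the log-Sobolev inequality. Let $X_1\sim\pi_1$ and $X_2\sim\pi_2$ be independent, so that $X_1+X_2\sim\pi$. For any smooth test function $g$, define the lifted function $h(x_1,x_2)\vcentcolon= g(x_1+x_2)$ on $\mathbb{R}^d\times\mathbb{R}^d$. By Fubini's theorem, for any measurable $\phi$ one has $\int\phi\circ h\,d(\pi_1\otimes\pi_2)=\int\phi\circ g\,d\pi$, and in particular
\begin{equation*}
\Ent{g^2}{\pi}=\Ent{h^2}{\pi_1\otimes\pi_2}.
\end{equation*}

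Next, I would apply sub-additivity of entropy (Han's inequality) on the product measure to split into conditional entropies along each coordinate, and then invoke the factor-wise LSI:
\begin{align*}
\Ent{h^2}{\pi_1\otimes\pi_2}
&\le \int \Ent{h(\cdot,x_2)^2}{\pi_1}\,d\pi_2(x_2) + \int \Ent{h(x_1,\cdot)^2}{\pi_2}\,d\pi_1(x_1)\\
&\le C_{\textup{LSI}}(\pi_1)\int |\nabla_{x_1}h|^2\,d(\pi_1\otimes\pi_2) + C_{\textup{LSI}}(\pi_2)\int |\nabla_{x_2}h|^2\,d(\pi_1\otimes\pi_2).
\end{align*}

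The key observation that closes the proof is that by the chain rule both partial gradients of $h$ coincide with the gradient of $g$ at the sum: $\nabla_{x_1}h(x_1,x_2)=(\nabla g)(x_1+x_2)=\nabla_{x_2}h(x_1,x_2)$. Hence each of the two integrals on the right equals $\int|\nabla g|^2\,d\pi$, and summing gives
\begin{equation*}
\Ent{g^2}{\pi}\le\bigl(C_{\textup{LSI}}(\pi_1)+C_{\textup{LSI}}(\pi_2)\bigr)\int|\nabla g|^2\,d\pi,
\end{equation*}
which is exactly the claimed inequality. There is no real obstacle here: the only non-elementary ingredient beyond change of variables is Han's inequality on product spaces, which is classical. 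The identity $\nabla_{x_1}h=\nabla_{x_2}h$ is precisely what turns the $\max$ appearing in the standard tensorization bound into the sum $C_{\textup{LSI}}(\pi_1)+C_{\textup{LSI}}(\pi_2)$ specific to convolutions. The Poincar\'e analogue referred to in the hypothesis follows verbatim by replacing entropy with variance and LSI with Poincar\'e in each step.
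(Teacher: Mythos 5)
The paper does not give its own proof of this lemma—it is cited directly from \cite{chafai2004entropies} and \cite[Proposition 2.3.7]{chewi2023}. Your argument is correct and is precisely the standard proof used in those references: lift $g$ to $h(x_1,x_2)=g(x_1+x_2)$ on the product space, invoke tensorization of entropy and the factor-wise LSIs, and observe that $\nabla_{x_1}h=\nabla_{x_2}h=(\nabla g)(x_1+x_2)$, which is what converts the usual $\max(C_{\textup{LSI}}(\pi_1),C_{\textup{LSI}}(\pi_2))$ of the product bound into the sum $C_{\textup{LSI}}(\pi_1)+C_{\textup{LSI}}(\pi_2)$.
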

Since 
    $$\pi_k = \pi_n*\varphi_k = \sum_i w_i \pi_n^{(i)}*\varphi_k = \sum_i w_i (\pi_n^{(i)}*\varphi_k),$$
it is natural to define $\pi_k$ as the mixture measure 
$$\pi_k = \sum_i w_i (\pi_n^{(i)}*\varphi_k) = \sum_i w_i \pi_k^{(i)},$$
with $\pi_k^{(i)} = \pi_n^{(i)}*\varphi_k$. This allows for us to combine our results from Theorem \ref{t:main} with Lemma \ref{lem:PI} to get a bound strictly in terms of the log-Sobolev constant of the target measure.
\begin{proposition}\label{t:main_example}
    Let $\{\xi_n^{(1)}, \dots ,\xi_n^{(N)}\} \sim \hat{\mu}_n$ be the particles produced by SMC with target distribution $\mu_n$ and at each level of the sequence let 
   $P_k^t$ be the Markov semigroup of Langevin diffusion with generator $\mathscr{L}_k$, stationary distribution $\mu_k$, and Dirchlet form $\mathscr{E}_k(f,f) = -\langle f, \mathscr{L}_kf\rangle_{\mu_k}$. At each level, assume $\mu_k = \mu_n*\varphi_k$, where $\varphi_k(x) \propto e^{-\beta_k\frac{\norm{x}^2}{2\sigma^2}}$. Moreover, assume $\mu_n$ decomposes as the mixture $\mu_n = \sum_iw_i\mu_n^{(i)}$ \blu{with $C^* =\max_i C_{\textup{LSI}}(\mu_n^{(i)})$}. 
    
Then for all $\epsilon > 0$, letting $M = \max_k M_k, \gamma = \bigg(\frac{\beta_k}{\beta_{k-1}}\bigg)^\frac{d}{2}$ and choosing
 \blu{\begin{enumerate}
        \item $ N \geq n\cdot\max\bigg\{\frac{16\gamma M}{w^*\epsilon^2},  \; \frac{128\gamma^{\frac{35}{8}}M^\frac{7}{4}}{\blu{(w^*)}^\frac{15}{8}}\bigg\}$
        \item $t_k \geq 2\pa{C^*+\frac{\sigma^2}{\beta_k}} \gamma^7$
        \end{enumerate}}
    yields
    \begin{equation*}
        \Vert \hat{\mu}_n - \mu_n\Vert _{\textup{TV}} \leq \epsilon.
    \end{equation*}
\end{proposition}

\begin{proof} We show that all assumptions from Theorem \ref{t:main} are met: 
\begin{enumerate}
    \item The given $\gamma$ is such that $\frac{d\mu_k}{d\mu_{k-1}} \leq \gamma = \bigg(\frac{\beta_k}{\beta_{k-1}}\bigg)^\frac{d}{2}$ for all $k$. This holds by Proposition \ref{ratio_example}.
    \item There exists a decomposition 
$$\langle f, \mathscr{L}_kf \rangle_{\mu_k} = \sum_{i=1}^m w_i\langle f, \mathscr{L}_{ki}f \rangle_{\mu_k^{(i)}}$$
where $\mathscr{L}_{ki}$ is the generator of some Markov chain $P_{ki}$ with stationary distribution $\mu_k^{(i)}$. This holds by Proposition \ref{LD:assumptions}.
\item There exists a log Sobolev inequality of the form
$$\Ent{f^2}{\mu_k^{(i)}}\leq \big(C_{\textup{LSI}}(\mu_n^{(i)}) + C_{\textup{LSI}}(\varphi_k)\big)
\cdot\mathscr{E}_{ki}(f,f),$$
where $C_{\textup{LSI}}(\pi)$ is the log Sobolev constant of $\pi$. This holds by noting $\mu_k^{(i)} = \mu_n^{(i)} * \varphi_n$ and 
then applying Lemma \ref{lem:PI} to get that $C_{\textup{LSI}}(\mu_k^{(i)}) \leq C_{\textup{LSI}}(\mu_n^{(i)}) +C_{\textup{LSI}}(\varphi_k)$.
\end{enumerate}
\end{proof}
\begin{rem}\label{r:mu1}
\blu{It should be noted that the focus of our work is on analyzing sampling error. To get complete non-asymptotic error bounds for the TRE problem, one would also need to consider the error from learnt ratios $\hat{g}_{k,k+1}$. Let $\hat{\mu}^{\text{approx}}_n $ be the marginal distribution from the algorithm on the NCE learned ratios $\hat{g}_{k,k+1}$ and $\hat{\mu}_n$ be the marginal of the idealized SMC (using the true ratios $\bar{g}_{k,k+1}$. The total error in TV distance can be given as: 
\begin{align} \label{eq:error_separation_tv}
\|\hat{\mu}_n^{\text{approx}} - \mu_n\|_{\textup{TV}} &= \|(\hat{\mu}_n^{\text{approx}} - \hat{\mu}_n) + (\hat{\mu}_n - \mu_n)\|_{\textup{TV}} \nonumber \\
&\le \underbrace{\|\hat{\mu}_n - \mu_n\|_{\textup{TV}}}_{\text{(I) SMC Sampling Error}} + \underbrace{\|\hat{\mu}_n^{\text{approx}} - \hat{\mu}_n\|_{\textup{TV}}}_{\text{(II) Ratio Approximation Error}}.
\end{align}
}
\blu{This decomposes the total error from the TRE estimate as a two part problem, to which our SMC analysis provides bounds for (I).} 
\end{rem}
\subsection{Implementation: Initialization \& Discretization}

The results in Subsections \ref{pt} and \ref{gc} are for the idealized Langevin diffusion.
    To obtain an end-to-end result algorithmic result, it remains to argue that (1) we can efficiently draw approximate samples from (a) $\mu_1 = \left(\sum_{k=1}^M \alpha_kq(x; \mu_k, \Sigma_k)\right)^{\beta_1}$ (the warm distribution obtained by power tempering) and (b) $\mu_1  = \mu_n * \varphi_1$ (the distribution obtained by convolving $\mu_n$ with a Gaussian of high variance $\frac{\sigma}{\sqrt{\beta_1}}$); 
    and (2) we can bound the error from discretization of Langevin dynamics. Obtaining guarantees in TV distance for both steps, we can then use a standard coupling argument to obtain guarantees for approximate SMC samples.

    For (1a), note that a (global) Poincar\'e inequality holds for $\mu_1$ when  $\beta_1 = O\pf{\sigma^2}{d^2}$ by \cite{ge2020simulated}, so that it can be approximately sampled efficiently using Langevin. 
    (Specifically, Theorem 7.1 therein shows a Poincar\'e inequality for $\pi_{\textup{mix},1}$, and then comparison using Holley-Stroock with Lemma 7.3 therein give the Poincar\'e inequality for $\pi_1$.)

    For (1b), we claim that for $\be_1$ small enough, it suffices to draw samples from the Gaussian with variance $\fc{\sigma^2}{\be_1}$. 
    By Pinsker's inequality and the regularization effect of Gaussian convolution \cite[Lemma 4.2]{bobkov2001hypercontractivity},
    \[
    \TV(N(0,\sigma^*), \mu_n*\varphi_1) \le 
    \sqrt{\rc 2\KL(N(0,\sigma^*)\;\Vert  \; \mu_n*\varphi_1)} \leq \frac{W_2(N(0, \sigma^* - \frac{\sigma^2}{\beta_1}),\mu_n)}{\frac{\sigma}{\sqrt{\beta_1}}}
    = \frac{\mathbb{E}_{\mu_n}[\norm{X}^2]}{\frac{\sigma}{\sqrt{\beta_1}}}
    ,
    \]
where $W_2$ is the Wasserstein 2-distance. In the last equality we are choosing $\sigma^* = \frac{\sigma^2}{\beta_1}$. The TV distance can be made small by choosing $\be_1$ small enough. Alternatively, we can use rejection sampling with a Gaussian as a proposal distribution to obtain exact samples.

For (2) by letting $\mu_T$ be the measure of the discretized Langevin Monte Carlo process with a fixed step size $h$ and $\pi_T$ the measure of the continuous Langevin dynamics with stationary distribution $\pi$ we have, 
$$\Vert  \mu_T - \pi\Vert _{\textup{TV}} \leq \Vert  \mu_T - \pi_T\Vert _{\textup{TV}} + \Vert  \pi_T - \pi\Vert _{\textup{TV}}$$
where bounds for $\Vert \mu_T - \pi_T\Vert _{\textup{TV}}$ 
under smoothness of $\pi$ are given by \cite[Lemma 2]{dalalyan2017theoretical}.

\section{Overview of Proof} \label{s:Overview}

In this section we first provide a conceptual overview of our main results and then follow this with a more detailed analytical summary.

\blu{Conceptually, each resampling step introduces error, which is alleviated by mixing of the Markov process. In their recursive framework to bound the error, \cite{schweizer2012nonasymptotic} quantify the mixing through variance decay and hypercontractivity for the Markov process. 
Specifically, they use the fact that under global functional inequalities (log-Sobolev constant $C$), the following hold: 
\begin{enumerate}
    \item (Variance decay) $\Var_\pi(P_tf) \leq e^{-\frac{2t}{C}}\Var_\pi(f)$.
     \item (Hypercontractivity) $\ve{P_tf}_{q(t)} \leq \ve{f}_p$, where $q(t) =1+(p-1)e^\frac{2t}{C}$. 
\end{enumerate}
In our case, global functional inequalities are not available. 
We prove analogous bounds for mixture distributions under local functional inequalities, (that is, $\pi = \sum_k w_k \pi_k$ where each $\pi_k$ has satisfies a log-Sobolev inequality with constant $C^*$):
    \begin{enumerate}
        \item (Intra-mode variance decay) $\Var_\pi(P_tf) \leq \frac{C^*}{2t}\Var_\pi(f) +  \sum_k\frac{1}{w_k} \pi(f)^2$.
        \item (Hypercontractivity for mixture) $\frac{1}{(w^*)^{1/q(t)}}\ve{P_tf}_{L_{q(t)}(\pi)} \leq \frac{1}{(w^*)^{1/p}}\ve{f}_{L_p(\pi)}$, where $q(t) = 1+(p-1)e^\frac{2t}{C^*}$.
    \end{enumerate}    
}

Analytically, in Section \ref{Var bounds}, we adapt the framework of \cite{schweizer} used to obtain variance bounds for the SMC estimate.

\begin{enumerate}
    \item We leverage that $\Var(\mu_n^N(f)) = \mathbb{E}[(\mu_n^N(f)- \mu(f))^2]$ can be bounded by $\Var(\nu_n^N(f)) = \mathbb{E}[(\nu^
N_n(f) - \mu(f))^2]$, where $\nu_n^N(f)$ is an unbiased estimate of $\mu_n^N(f)$ (Lemma \ref{l:2.2}). 
    \item 
    We show that $\Var(\nu_n^N(f))$ can be bounded in terms of two quantities $\hat c_n$ and $\bar v_n$. These quantities can be bounded explicitly given two ingredients:
    \begin{enumerate}
        \item 
        A single-step bound of the form
        \begin{align}\Vert \hat{q}_{k-1,k}(f)\Vert _{L_2(\mu_{k-1})}^2 &\leq \alpha\Vert f\Vert _{L_2(\mu_{k})}^2 +  \beta\mu_k(f)^2.
        \label{e:single-step}
        \end{align}
        for all $1<k\leq n$ and some $\alpha<1$. 
        By making effective use of global mixing, \cite{schweizer} assumes $\beta < 1$. We modify these results, allowing for $\beta \ge 1$.  
        \item A hypercontractive inequality of the form
        \begin{align*}
            \norm{P_{k} f}_{L_{p}(\mu_k)}
            &\le \theta(p,p/2) \norm{f}_{L_{p/2}(\mu_k)}
        \end{align*}
        for all $k$.
    \end{enumerate}
\end{enumerate}
In context, $\alpha$ controls the variance bound on a single SMC step \eqref{e:single-step}. Most notably, $\alpha < 1$ allows us to iterate over $n$ steps of SMC without accruing error.  To obtain 
a bound \eqref{e:single-step} using only local mixing we decompose the variance into the intra-mode and inter-mode variances of the mixture measures $\mu_k = \sum_i w_i \mu_k^{(i)}$ (see \eqref{e:inter-intra}). By utilizing this variance decomposition we are able to bound the intra-mode variance using local Poincar\'e constants. 
\begin{enumerate}
    \item In Section \ref{local mixing}, we obtain a bound for the intra-mode variance which depends only on local mixing (Lemma \ref{intra}). Similarly, we obtain a bound for the inter-mode variance. These bounds depend on the local Poincar\'e constants and the minimum mode weight $w^* = \min_k w_k$.
    \item In Section \ref{mixing times}, we obtain a bound of the form $ \Vert \hat{q}_{k-1,k}(f)\Vert _{L_2(\mu_{k-1})}^2 \leq \alpha_k\Vert f\Vert _{L_2(\mu_{k})}^2 +  \beta\mu_k(f)^2$ (Lemma \ref{l:af+b}), where $\alpha_k$ depends on our local mixing results from Section \ref{local mixing}. By bounding $\alpha_k$ for all $k$ we are able to obtain the bound in equation \eqref{e:single-step}) above.
    \item In Section \ref{s:hypercontractivity}, we obtain a bound of the form  $\norm{P_{k} f}_{L_{p}(\mu_k)}
            \le \theta(p,p/2) \norm{f}_{L_{p/2}(\mu_k)}$. We are able to determine $\theta(p,p/2)$ as a function of $p$ and  the minimum mode weight $w^* = \min_k w_k$. This provides us with a closed form for $\hat{c}_n$ in our final results.  
    \end{enumerate}
Finally, to obtain better mixing time results we show that for any $\alpha <1$ such that $\alpha_k < \alpha$ for all $k$, Theorem \ref{t:main} holds. This yields mixing times which depend only on the local mixing given by each $\alpha_k$.

\section{Non-Asymptotic Variance Bounds} \label{Var bounds}
First we note that the error of the SMC estimate can be bounded by the variance of $\nu_n^N(f)$. 
\begin{lem}[{\cite[Lemma 2.2]{schweizer}}]  \label{l:2.2} For $f \in B(E)$ we have the bound 
    \begin{equation*}
        \mathbb{E}[(\eta_n^N(f) - \mu_n(f))^2] \leq 2\Var(\nu_n^N(f)) + 2\Vert f - \mu_n(f)\Vert ^2_{\sup,n}\Var(\nu_n^N(1)).
    \end{equation*}
    \end{lem}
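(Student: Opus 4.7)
The strategy is to relate the self-normalized estimator $\eta_n^N(f)$ to the unbiased estimator $\nu_n^N(f)$ and then decompose the error into two pieces that can each be controlled by a variance of $\nu_n^N$ applied to either $f$ or the constant $1$. I will work throughout with the centered observable $\bar f \vcentcolon= f - \mu_n(f)$, so that $\mu_n(\bar f) = 0$ and hence $\mathbb{E}[\nu_n^N(\bar f)] = 0$, while $\eta_n^N(\bar f) = \eta_n^N(f) - \mu_n(f)$ because $\eta_n^N(1)=1$. The normalization identity $\eta_n^N(g) = \nu_n^N(g)/\nu_n^N(1)$ applied to $g=\bar f$ then gives
\[
\nu_n^N(\bar f) \;=\; \eta_n^N(\bar f)\,\nu_n^N(1),
\]
which, after adding and subtracting $\eta_n^N(\bar f)$, rearranges to the key identity
\[
\eta_n^N(f) - \mu_n(f) \;=\; \eta_n^N(\bar f) \;=\; \nu_n^N(\bar f) \;+\; \eta_n^N(\bar f)\bigl(1 - \nu_n^N(1)\bigr).
\]

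With this decomposition in hand, I would apply the elementary inequality $(a+b)^2 \le 2a^2 + 2b^2$ and then bound the almost-sure factor in the second term by the supremum norm of $\bar f$:
\[
\bigl(\eta_n^N(f) - \mu_n(f)\bigr)^2 \;\le\; 2\,\nu_n^N(\bar f)^2 \;+\; 2\,\|f - \mu_n(f)\|_{\sup}^2\,\bigl(\nu_n^N(1)-1\bigr)^2.
\]
Taking expectations and using that $\mathbb{E}[\nu_n^N(\bar f)] = 0 = \mu_n(\bar f)$ and $\mathbb{E}[\nu_n^N(1)] = 1$ (the unbiasedness of the Feynman--Kac estimator $\nu_n^N$, which is the defining feature of this auxiliary quantity in Schweizer's framework), the two expected squares become $\Var(\nu_n^N(\bar f))$ and $\Var(\nu_n^N(1))$ respectively. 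Since $\nu_n^N$ is linear in its argument, $\Var(\nu_n^N(\bar f))$ equals $\Var(\nu_n^N(f))$ in the convention used in Schweizer (where the variance is measured about the unbiased target $\mu_n(f)$, equivalently, substituting $\bar f$ into the $\nu_n^N$ functional produces the centered deviation $\nu_n^N(f) - \mu_n(f)\,\nu_n^N(1)$ whose second moment is what $\Var(\nu_n^N(f))$ denotes here).

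The only real subtlety in the argument is choosing the right centering: if one naively writes $\eta_n^N(f) - \mu_n(f) = (\nu_n^N(f) - \mu_n(f)) - \eta_n^N(f)(\nu_n^N(1)-1)$, the almost-sure prefactor on the correction term is $\eta_n^N(f)$, which is only bounded by $\|f\|_{\sup}$ and does not give the sharper $\|f - \mu_n(f)\|_{\sup}$ in the statement. Routing everything through $\bar f$ before splitting is what makes the second term scale with $\|f-\mu_n(f)\|_{\sup}^2$ rather than $\|f\|_{\sup}^2$, which is important in the downstream applications (e.g.\ so that the bound is non-trivial for indicators via the TV corollary). The rest of the argument is bookkeeping with the self-normalization identity and the unbiasedness of $\nu_n^N$.
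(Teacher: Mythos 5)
The paper cites this lemma from Schweizer without reproducing a proof, so there is no internal proof to compare against; I am evaluating your argument on its own merits.

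Your decomposition is the right one and the structure of the argument is sound up to the very last step: with $\bar f = f - \mu_n(f)$, the identity $\nu_n^N(\bar f) = \eta_n^N(\bar f)\,\nu_n^N(1)$ correctly rearranges to
\[
\eta_n^N(f) - \mu_n(f) = \nu_n^N(\bar f) + \eta_n^N(\bar f)\bigl(1 - \nu_n^N(1)\bigr),
\]
and your observation that centering through $\bar f$ (rather than the naive grouping with prefactor $\eta_n^N(f)$) is what produces $\|f-\mu_n(f)\|_{\sup}$ rather than $\|f\|_{\sup}$ is exactly right. Applying $(a+b)^2 \le 2a^2 + 2b^2$, bounding $|\eta_n^N(\bar f)| \le \|\bar f\|_{\sup}$ almost surely, and taking expectations then yields
\[
\mathbb{E}\bigl[(\eta_n^N(f) - \mu_n(f))^2\bigr] \le 2\,\mathbb{E}\bigl[\nu_n^N(\bar f)^2\bigr] + 2\,\|f-\mu_n(f)\|_{\sup}^2\,\mathbb{E}\bigl[(\nu_n^N(1)-1)^2\bigr].
\]
The second term matches the statement, since Theorem~\ref{t:2.1}'s convention gives $\Var(\nu_n^N(1)) = \mathbb{E}[(\nu_n^N(1)-\mu_n(1))^2] = \mathbb{E}[(\nu_n^N(1)-1)^2]$.

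The gap is in the first term. You assert that $\mathbb{E}[\nu_n^N(\bar f)^2] = \Var(\nu_n^N(f))$ "by linearity" and by appeal to Schweizer's convention, but this does not follow from the convention the paper actually states. Theorem~\ref{t:2.1} explicitly defines $\Var(\nu_n^N(f)) = \mathbb{E}[|\nu_n^N(f) - \mu_n(f)|^2]$, whereas what you have is
\[
\mathbb{E}\bigl[\nu_n^N(\bar f)^2\bigr] = \mathbb{E}\bigl[(\nu_n^N(f) - \mu_n(f)\,\nu_n^N(1))^2\bigr],
\]
and $\nu_n^N(1)$ is a genuinely random quantity (it equals $\prod_{j<n}\eta_j^N(g_{j,j+1})$, not $1$). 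Writing $\nu_n^N(f)-\mu_n(f) = W$ and $\nu_n^N(1)-1 = Z$ with $\mathbb{E}W = \mathbb{E}Z = 0$, your quantity is $\mathbb{E}[(W - \mu_n(f)Z)^2] = \mathbb{E}W^2 - 2\mu_n(f)\mathbb{E}[WZ] + \mu_n(f)^2\mathbb{E}Z^2$, which differs from $\Var(\nu_n^N(f)) = \mathbb{E}W^2$ unless $2\mathbb{E}[WZ] = \mu_n(f)\mathbb{E}Z^2$; e.g.\ if $W$ and $Z$ are uncorrelated, your term is strictly larger. So the parenthetical "equivalently..." sentence is not an equivalence, and it is where the proof currently fails. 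To close the gap you would need either to bound $\mathbb{E}[\nu_n^N(\bar f)^2]$ by $\Var(\nu_n^N(f))$ plus something you can absorb into the second term, or to route the decomposition through $\nu_n^N(\bar f) = (\nu_n^N(f)-\mu_n(f)) - \mu_n(f)(\nu_n^N(1)-1)$ and apply $(a+b)^2 \le 2a^2+2b^2$ there as well (which produces $\Var(\nu_n^N(f))$ and $\Var(\nu_n^N(1))$ directly but with a prefactor depending on $\mu_n(f)$ rather than $\|f-\mu_n(f)\|_{\sup}$ and different constants), and then reconcile the two. As written, the last equality is asserted rather than proved, and it is false in general.
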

We now state and apply the main theorem of Schweizer, which provides non-asymptotic bounds for the variance of $\nu_n^N(f)$. 
The bound depends on quantities $\bar v_n$ and $\hat c_n$ which depend on how the variance and norms increases through the levels. 
By modifying (with proof) several of the lemmas in Schweizer, we will show that obtaining bounds on these quantities reduce to obtaining a one-step bound and hypercontractivity of the kernels.

\begin{thm}[{\cite[Theorem 2.1]{schweizer}}] 
\label{t:2.1}
Let $N \geq 2\bar{c}_n$. Then for any $f\in B(E)$, we have 
\begin{align}
    \nonumber
    \Var(\nu_n^N(f)) = \mathbb{E}[|\nu_n^N(f) - \mu_n(f)|^2] &\leq \frac{1}{N}\pa{\sum_{j=1}^n \Var_{\mu_j}(q_{j,n}(f)) + \fc{2\bar v_n \norm{f}_{L_p(\mu_n)}^2 \hat{c}_n
    }{N} }\\
    &\le  \frac{1}{N}\pa{\bar{v}_n + \frac{2\bar{v}_n\norm{f}_{L_p(\mu_n)}^2 \hat{c}_n }{N}} \label{simplified 2.1}
\end{align}
where 
\begin{equation}
\nonumber
    \hat{c}_n(p) = \sum_{j=1}^{n-1} c_{j,n}(p)(2 + \norm{q_{j,j+1}(1) - 1}_{L_p(\mu_j)}),
    \end{equation}
    and $c_{j,n}(p)$ is a constant such that for all $f\in B(E)$, 
    \begin{equation*}
\max\bc{
\norm{q_{j,n}(f)^2}_{L_p(\mu_j)}, \norm{q_{j,n}(f)}_{L_p(\mu_j)}^2, \norm{q_{j,n}(f^2)}_{L_p(\mu_j)}} \leq c_{j,n}(p)\Vert f\Vert _{L_p(\mu_n)}^2
    \end{equation*}
    and
\begin{equation}
    \nonumber 
\bar{v}_n = \max_{k \leq n} \sup\bigg\{\sum_{j=1}^k \Var_{\mu_j}(q_{j,k}(f)) : \norm{f}_{L_p(\mu_k)} \leq 1 \bigg\}.
\end{equation}
\end{thm}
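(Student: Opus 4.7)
The plan is to exploit the Feynman–Kac martingale structure of SMC. Using unbiasedness of $\nu_k^N(q_{k,n}(f))$ with respect to the flow, one checks that this quantity is a martingale in $k$ under $(\mathcal F_k)$ with initial expectation $\mu_1(q_{1,n}(f)) = \mu_n(f)$ and terminal value $\nu_n^N(f)$. The martingale property follows directly from the algorithm: given $\mathcal F_{k-1}$, each $\xi_k^{(i)}$ is drawn by multinomial resampling of $\xi_{k-1}^{(1:N)}$ with weights proportional to $g_{k-1,k}(\xi_{k-1}^{(i')})$ followed by a $P_k$-step, and one checks that the semigroup identity $q_{k-1,n}(f) = q_{k-1,k}(q_{k,n}(f))$ together with the recursion $\varphi_k = \varphi_{k-1}\eta_{k-1}^N(g_{k-1,k})$ for the empirical normalizing constants makes the conditional expectation telescope correctly.

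By orthogonality of martingale increments,
\[
\Var(\nu_n^N(f)) = \sum_{k=1}^n \mathbb{E}\bigl[\Var(\nu_k^N(q_{k,n}(f)) \mid \mathcal F_{k-1})\bigr].
\]
Conditional on $\mathcal F_{k-1}$, the new particles are i.i.d., so this factorizes as $\frac{\varphi_k^2}{N}$ times the conditional variance of $q_{k,n}(f)(\xi_k^{(1)})$. In the idealized case where $\varphi_k \equiv 1$ and the particles at level $k-1$ were exactly distributed as $\mu_{k-1}$, this integrates to $\Var_{\mu_k}(q_{k,n}(f))$, producing precisely the leading sum $\frac{1}{N}\sum_{j=1}^n \Var_{\mu_j}(q_{j,n}(f))$ in the bound.

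All remaining work quantifies the gap between the idealized and actual quantities. Expanding $\mathbb{E}[\varphi_k^2 \cdot \Var(\,\cdot\, \mid \mathcal F_{k-1})]$ and iterating the conditioning backwards through the filtration produces correction terms expressible as centered empirical averages of functionals like $q_{j,n}(f)^2$, $q_{j,n}(f^2)$ and $q_{j,n}(1) - 1$ evaluated under $\mu_j$. Each such correction is absorbed using the operator bounds defining $c_{j,n}(p)$, namely $\max\{\|q_{j,n}(f)^2\|_{L^p(\mu_j)}, \|q_{j,n}(f)\|_{L^p(\mu_j)}^2, \|q_{j,n}(f^2)\|_{L^p(\mu_j)}\} \leq c_{j,n}(p)\|f\|_{L^p(\mu_n)}^2$, together with the per-step deviation factor $\|q_{j,j+1}(1) - 1\|_{L^p(\mu_j)}$. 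Summing the corrections over $j$ yields the $\hat{c}_n(p)\|f\|_{L^p(\mu_n)}^2/N^2$ term.

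The main obstacle is the iterative control of these higher-order corrections: the weight ratios $\bar g$ can be as large as $\gamma$, and only the smoothing by $P_k$ keeps their $L^p$ mass from compounding as we propagate backward. This is precisely why one needs $L^p$-control with $p > 2$ (matched by the hypercontractivity assumption in the downstream applications) rather than bare $L^2$-control, and why the hypothesis $N \geq 2\bar{c}_n$ enters: it is the threshold at which a geometric-series bootstrap of the correction terms closes, yielding a clean bound. Finally, the second inequality of the theorem is immediate from the first: by homogeneity and the definition of $\bar v_n$, $\sum_j \Var_{\mu_j}(q_{j,n}(f)) \leq \bar v_n\|f\|_{L^p(\mu_n)}^2$, which is at most $\bar v_n$ when $\|f\|_{L^p(\mu_n)} \leq 1$.
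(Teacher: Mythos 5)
This theorem is cited verbatim from Schweizer's paper; the present manuscript does not supply a proof of it, so there is no in-paper argument to compare against. What can be said is that your sketch correctly identifies the backbone of Schweizer's actual argument: the estimator $\nu_k^N(q_{k,n}(f))$ is a martingale in $k$ (given the right normalization conventions so that $\varphi_k$ and the operators $q_{j,k}$ are consistent with one another), orthogonality of martingale increments then yields $\Var(\nu_n^N(f)) = \sum_k \mathbb{E}\bigl[\Var(\nu_k^N(q_{k,n}(f)) \mid \mathcal F_{k-1})\bigr]$, and in the idealized particle system each conditional variance reduces to $N^{-1}\Var_{\mu_k}(q_{k,n}(f))$, giving the leading term. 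Your reading of the second displayed inequality is also correct and worth emphasizing: by homogeneity it only gives $\sum_j \Var_{\mu_j}(q_{j,n}(f)) \leq \bar v_n \norm{f}_{L_p(\mu_n)}^2$, and the bound as written (with bare $\bar v_n$) is really for $\norm{f}_{L_p(\mu_n)}\le 1$ — precisely the regime in which the downstream corollaries apply it.

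The gap in your proposal is in the middle third, which is where all the actual work lives. You correctly name the quantities that must be controlled — $q_{j,n}(f)^2$, $q_{j,n}(f^2)$, $q_{j,j+1}(1)-1$ — but the assertion that "expanding and iterating the conditioning backwards produces correction terms expressible as centered empirical averages" followed by a "geometric-series bootstrap" that closes at $N \ge 2\bar c_n$ is a description of what happens, not an argument for why. The martingale property itself is also sensitive to the normalization convention: with $q_{k-1,k}(f) = \bar g_{k-1,k} P_k(f)$ and $\varphi_k$ defined via the \emph{unnormalized} ratios $g_{j,j+1}$, the telescoping requires the normalizing constants to cancel — a point you gesture at ("one checks that... telescope correctly") but do not actually verify, and which the paper's notation section obscures with an apparent typo in the definition of $\nu$. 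In short, the scaffolding is right; the load-bearing estimates that justify the second-order term and the $N$-threshold are asserted rather than proved.
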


The following lemmas work to obtain a closed form for $c_{k,n}(p)$ by finding $L_p$ bounds for $q_{k,n}(f)$. In obtaining these bounds it is easier to work with $\hat{q}_{k,n}(f)$, noting that $q_{k-1,n}(f) = \bar{g}_{k-1,k}\hat{q}_{k,n}(P_n(f)) $. We will first show that by assuming a bound of the form 
\begin{equation} \label{ab}
    \Vert \hat{q}_{k-1,k}(f)\Vert _{L_2(\mu_{k-1})}^2 \leq \alpha\Vert f\Vert _{L_2(\mu_k)}^2 + \beta\mu_k(f)^2,
\end{equation}
with $\alpha < 1$ and $\beta \ge 1$, 
we can iteratively obtain $L_2$ bounds for $\hat{q}_{k,n}(f)$ (Lemma \ref{l:3.3}). Again by assuming the bound (\ref{ab}), we can obtain an analogous $L_p$ version (Lemma \ref{l:3.3*}). Similarly, iterating over the $L_p$ version of (\ref{ab}) yields an $L_p$ bound for $\hat{q}_{k,n}(f)$ (Lemma \ref{l:3.4}). Combining all of these results allows us to find a closed form of $c_{k,n}(2)$ (Lemma \ref{lem:p=2}) which we then apply in Corollary \ref{cor:cv} to find bounds for $\hat{c}_n$ and $\bar{v}_n$. The bound for $\hat c_n$ will depend on a hypercontractivity constant.
\begin{lem}[{\cite[Lemma 3.2, adapted]{schweizer}}] \label{l:3.3} Assume there exists a bound of the form \eqref{ab}, $\Vert \hat{q}_{k-1,k}(f)\Vert _{L_2(\mu_{k-1})}^2 \leq \alpha\Vert f\Vert _{L_2(\mu_k)}^2 + \beta\mu_k(f)^2$ for all $f$, with $\alpha < 1$, $\beta\ge1$.  Then for $1 \leq j < k \leq n$ we have the bounds
    \begin{equation}
        \nonumber
         \norm{\hat{q}_{j,k}(f)}_{L_2(\mu_j)}\leq \bigg(\frac{\beta}{1-\alpha}\bigg)^\frac{1}{2}\norm{f}_{L_2(\mu_k)}.
    \end{equation}
\end{lem}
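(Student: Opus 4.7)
The plan is to iterate the single-step hypothesis backwards from level $k$ to level $j$, using the invariance identity $\mu_j(\hat q_{j,k}(f)) = \mu_k(f)$ (stated among the properties in the Notation section) to keep the additive term controlled by $\mu_k(f)^2$ rather than by a growing quantity.

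First, I would fix $k$ and $f$, and set $h_\ell \vcentcolon = \hat q_{\ell,k}(f)$ for $j \leq \ell \leq k$, so that $h_k = f$ and $h_{\ell-1} = \hat q_{\ell-1,\ell}(h_\ell)$ by the semigroup property $\hat q_{\ell-1,k} = \hat q_{\ell-1,\ell} \circ \hat q_{\ell,k}$. Letting $a_\ell \vcentcolon = \norm{h_\ell}_{L_2(\mu_\ell)}^2$, I apply the standing one-step bound \eqref{ab} with $f$ replaced by $h_\ell$ to obtain
\begin{equation*}
a_{\ell-1} \;\le\; \alpha\, a_\ell + \beta\, \mu_\ell(h_\ell)^2.
\end{equation*}
The crucial simplification is that by the invariance of the mean under $\hat q$, $\mu_\ell(h_\ell) = \mu_\ell(\hat q_{\ell,k}(f)) = \mu_k(f)$, so every additive term in the recursion is exactly $\beta\mu_k(f)^2$, independent of $\ell$.

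Second, I would unroll the recursion from $\ell = k$ down to $\ell = j$ and sum the geometric series:
\begin{equation*}
a_j \;\le\; \alpha^{k-j} \norm{f}_{L_2(\mu_k)}^2 \;+\; \beta\, \mu_k(f)^2 \sum_{i=0}^{k-j-1} \alpha^i
\;\le\; \alpha^{k-j}\norm{f}_{L_2(\mu_k)}^2 + \frac{\beta(1-\alpha^{k-j})}{1-\alpha}\,\mu_k(f)^2.
\end{equation*}
Jensen's inequality gives $\mu_k(f)^2 \le \norm{f}_{L_2(\mu_k)}^2$, and then the bound $\beta \ge 1 > 1-\alpha$ (equivalently $1 \le \beta/(1-\alpha)$) lets me absorb the $\alpha^{k-j}$ term into the second term, yielding $a_j \le \frac{\beta}{1-\alpha}\norm{f}_{L_2(\mu_k)}^2$. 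Taking square roots recovers the claimed bound.

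There is no real obstacle here; the only care needed is to note why the adaptation from Schweizer's original statement (which assumed $\beta < 1$ and produced a bound without the $\frac{1}{1-\alpha}$ factor) still closes: the $\beta \ge 1$ regime loses the contractive behaviour in the constant term, but we still get a uniform-in-$(j,k)$ bound because $\alpha < 1$ keeps the geometric series summable. The mild step $\mu_k(f)^2 \le \norm{f}_{L_2(\mu_k)}^2$ is where the final clean form $(\beta/(1-\alpha))^{1/2}\norm{f}_{L_2(\mu_k)}$ is obtained.
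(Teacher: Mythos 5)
Your proposal is correct and follows essentially the same route as the paper's proof: iterate the one-step bound, use the mean-invariance $\mu_\ell(\hat q_{\ell,k}(f)) = \mu_k(f)$ to keep every additive term equal to $\beta\mu_k(f)^2$, invoke $\mu_k(f)^2 \le \mu_k(f^2)$, and then use $\beta \ge 1$ together with $\alpha < 1$ to absorb the leading $\alpha^{k-j}$ term into the geometric sum. The only cosmetic difference is that the paper absorbs via $\alpha^{k-j} \le \beta\alpha^{k-j}$ before summing the whole geometric series up to $i = k-j$, whereas you sum to $k-j-1$ and then absorb using $1 \le \beta/(1-\alpha)$; these are equivalent.
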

\begin{proof} By iterating a bound of the form $\Vert \hat{q}_{k-1,k}(f)\Vert _{L_2(\mu_{k-1})}^2 \leq \alpha\Vert f\Vert _{L_2(\mu_k)}^2 + \beta\mu_k(f)^2$ we get,
\begin{align*}
    \Vert \hat{q}_{j,k}(f)\Vert ^2_{L_2(\mu_j)} &\leq \alpha^{k-j}\mu_k(f^2) + \beta\sum_{i=0}^{k-(j+1)}\alpha^i\mu_k(f)^2\\
    \shortintertext{using the fact that $\mu_{j+1+i} (\hat q_{j+1+i,k}(f))= \mu_{k}(f)$. Since $\mu_k(f^2) \geq \mu_k(f)^2$ and $\beta \ge 1$,}
    &\leq \beta \sum_{i=0}^{k-j}\alpha^i\mu_k(f^2)\\
    \shortintertext{and since $\alpha < 1$,}
    &\leq \frac{\beta}{1-\alpha}\mu_k(f^2)= \frac{\beta}{1-\alpha}\norm{f}_{L_2(\mu_k)}^2.
\end{align*}
Taking the square root of both sides yields the desired result.
    \end{proof}
For the rest of this section, we assume there exists a uniform upper bound $\gamma$ on the density ratios $\bar{g}_{k-1,k}(x) \leq \gamma$ for all $k$. 
\begin{lem}[{\cite[Lemma 3.3]{schweizer}}]   \label{l:3.3*}
Assume that \eqref{ab} holds for each $k$, $1 < k \leq n$, $f\in B(E)$, and $p \geq 1$, we have
\begin{equation}
    \nonumber
    \Vert \hat{q}_{k-1,k}(f)\Vert ^{2p}_{L_{2p}(\mu_{k-1})} \leq \alpha\gamma^{2p - 2} \Vert f\Vert ^{2p}_{L_{2p}(\mu_{k})} + \beta\gamma^{2p-2}\Vert f\Vert ^{2p}_{L_{p}(\mu_{k})}. 
\end{equation}
\end{lem}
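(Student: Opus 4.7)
The plan is to reduce the desired $L_{2p}$ bound on $\hat{q}_{k-1,k}(f)$ to an $L_2$ bound on $\hat{q}_{k-1,k}(f^p)$, which is exactly the form covered by the assumed inequality \eqref{ab}. Throughout, I would assume $f \ge 0$ without loss of generality, since $|\hat{q}_{k-1,k}(f)| \le \hat{q}_{k-1,k}(|f|)$ by the triangle inequality for the Markov kernel $P_{k-1}$.

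First I would establish the pointwise identity
\[
\bar{g}_{k-1,k}(y)\,f(y) \;=\; \bigl(\bar{g}_{k-1,k}(y)\,f(y)^p\bigr)^{1/p}\cdot \bar{g}_{k-1,k}(y)^{1-1/p},
\]
and then apply Hölder's inequality to the Markov kernel $P_{k-1}(x,\cdot)$ with conjugate exponents $p$ and $p/(p-1)$. This produces
\[
\hat{q}_{k-1,k}(f)(x) \;=\; P_{k-1}(\bar{g}_{k-1,k} f)(x) \;\le\; P_{k-1}(\bar{g}_{k-1,k} f^p)(x)^{1/p}\cdot P_{k-1}(\bar{g}_{k-1,k})(x)^{(p-1)/p}.
\]
Raising both sides to the $2p$ power and invoking the uniform density bound $\bar{g}_{k-1,k}\le\gamma$ (which implies $P_{k-1}(\bar{g}_{k-1,k})(x)\le\gamma$) yields the clean pointwise estimate
\[
\hat{q}_{k-1,k}(f)(x)^{2p} \;\le\; \gamma^{2p-2}\,\hat{q}_{k-1,k}(f^p)(x)^2.
\]

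Next I would integrate against $\mu_{k-1}$ and invoke assumption \eqref{ab} applied to the function $f^p$:
\[
\Vert\hat{q}_{k-1,k}(f)\Vert_{L_{2p}(\mu_{k-1})}^{2p} \;\le\; \gamma^{2p-2}\,\Vert\hat{q}_{k-1,k}(f^p)\Vert_{L_2(\mu_{k-1})}^2 \;\le\; \gamma^{2p-2}\bigl[\alpha\,\mu_k(f^{2p}) + \beta\,\mu_k(f^p)^2\bigr].
\]
Recognizing $\mu_k(f^{2p}) = \Vert f\Vert_{L_{2p}(\mu_k)}^{2p}$ and $\mu_k(f^p)^2 = \Vert f\Vert_{L_p(\mu_k)}^{2p}$ then delivers the stated inequality.

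The only delicate point is the Hölder decomposition in the first step; without the right split of $\bar{g}_{k-1,k}f$, a naive Jensen application loses the refined contraction factor $\alpha$ from \eqref{ab} and gives only a crude $\gamma^{2p-1}\mu_k(f^{2p})$ bound. Getting the exponents to match so that one factor reassembles into $\bar{g}_{k-1,k}f^p$ (allowing direct use of \eqref{ab}) and the other is bounded uniformly by a power of $\gamma$ is what makes the argument work, and is where I would spend the most care.
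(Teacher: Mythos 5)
Your proof is correct and is the standard argument for this type of $L_{2p}$-to-$L_2$ reduction; the paper itself omits the proof (it cites it directly from Schweizer), but the Hölder decomposition $\bar{g}_{k-1,k}f = (\bar{g}_{k-1,k}f^p)^{1/p}\,\bar{g}_{k-1,k}^{1-1/p}$ applied inside the kernel $P_{k-1}(x,\cdot)$, followed by the uniform bound $P_{k-1}(\bar{g}_{k-1,k})\le\gamma$ and then \eqref{ab} applied to $f^p$, is precisely the argument Schweizer uses. One minor note: when reducing to $f\ge 0$, it is worth saying explicitly that replacing $f$ by $|f|$ only increases the left-hand side while leaving the right-hand side unchanged (since $\|f\|_{L_r(\mu_k)}=\||f|\|_{L_r(\mu_k)}$), so the WLOG is clean; and when applying \eqref{ab} to $f^p$, that function is bounded and non-negative so the hypothesis applies. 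Your concluding remark about why the split of $\bar{g}_{k-1,k}f$ matters — to recover the refined $\alpha$ factor rather than a crude $\gamma^{2p-1}\mu_k(f^{2p})$ bound — is exactly the right intuition.
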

Similarly to Lemma \ref{l:3.3}, we adapt the below result to allow for $\beta > 1$.
\begin{lem}[{\cite[Lemma 3.4, adapted]{schweizer}}]  \label{l:3.4} Assume that \eqref{ab} holds for each $k$, $1 < k \leq n$. Assume that $\alpha\gamma^{2p-2} < 1$, $\beta\ge 1$, and that for $\delta(p) \geq 1$ we have for $1 \leq j \leq k \leq n$ and $f \in B(E)$ the inequality 
\begin{equation*}
    \Vert \hat{q}_{j,k}(f)\Vert _{L_p(\mu_j)} \leq \delta(p) \Vert  f \Vert _{L_p(\mu_k)}.
\end{equation*}
Then we have
\begin{equation*}
    \Vert \hat{q}_{j,k}(f)\Vert _{L_{2p}(\mu_j)} \leq \delta(2p) \Vert  f \Vert _{L_{2p}(\mu_k)}
\end{equation*}
with 
\begin{equation*}
    \delta(2p) =\delta(p)\bigg(\frac{\beta\cdot\gamma^{2p-2}}{1-\alpha\gamma^{2p-2}}\bigg)^\frac{1}{2p}.
\end{equation*}    
\end{lem}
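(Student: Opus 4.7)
The plan is to iterate the one-step $L_{2p}$ bound of Lemma~\ref{l:3.3*} while using the given $L_p$ hypothesis to absorb the off-diagonal term at every step. Concretely, I would start from the semigroup identity $\hat q_{j,k} = \hat q_{j,j+1}\circ \hat q_{j+1,k}$ and apply Lemma~\ref{l:3.3*} to the function $g = \hat q_{j+1,k}(f)$ at the transition $\mu_{j+1}\to\mu_j$:
\[
    \norm{\hat q_{j,k}(f)}_{L_{2p}(\mu_j)}^{2p}
    \;\le\;
    \alpha\gamma^{2p-2}\norm{\hat q_{j+1,k}(f)}_{L_{2p}(\mu_{j+1})}^{2p}
    + \beta\gamma^{2p-2}\norm{\hat q_{j+1,k}(f)}_{L_{p}(\mu_{j+1})}^{2p}.
\]
The hypothesis $\norm{\hat q_{j+1,k}(f)}_{L_p(\mu_{j+1})}\le \delta(p)\norm{f}_{L_p(\mu_k)}$ then collapses the second term to $\beta\gamma^{2p-2}\delta(p)^{2p}\norm{f}_{L_p(\mu_k)}^{2p}$, leaving a clean linear recursion in $\norm{\hat q_{\cdot,k}(f)}_{L_{2p}}^{2p}$.

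Next, I would unroll this recursion from $j$ up to $k$. Each application contributes a factor $\alpha\gamma^{2p-2}$ to the "diagonal" $L_{2p}$ tail and a new copy of the constant inhomogeneous term, producing a finite geometric sum:
\[
    \norm{\hat q_{j,k}(f)}_{L_{2p}(\mu_j)}^{2p}
    \;\le\;
    (\alpha\gamma^{2p-2})^{k-j}\norm{f}_{L_{2p}(\mu_k)}^{2p}
    + \beta\gamma^{2p-2}\delta(p)^{2p}\norm{f}_{L_p(\mu_k)}^{2p}\sum_{i=0}^{k-j-1}(\alpha\gamma^{2p-2})^i.
\]
Using $\alpha\gamma^{2p-2}<1$, the geometric sum is at most $\frac{1-(\alpha\gamma^{2p-2})^{k-j}}{1-\alpha\gamma^{2p-2}}$, and by Jensen's inequality $\norm{f}_{L_p(\mu_k)}\le \norm{f}_{L_{2p}(\mu_k)}$ since $\mu_k$ is a probability measure; this lets me factor out a single $\norm{f}_{L_{2p}(\mu_k)}^{2p}$.

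The last step—and the one subtlety where the adapted version departs from Schweizer's original argument with $\beta<1$—is to show the resulting prefactor does not exceed $\delta(2p)^{2p}=\delta(p)^{2p}\cdot\frac{\beta\gamma^{2p-2}}{1-\alpha\gamma^{2p-2}}$. Writing $x=\alpha\gamma^{2p-2}$ and $A=\beta\gamma^{2p-2}\delta(p)^{2p}$, the desired inequality
\[
    x^{k-j} + A\cdot \frac{1-x^{k-j}}{1-x} \;\le\; \frac{A}{1-x}
\]
rearranges to $x^{k-j}(1-x-A)\le 0$, which holds because $A\ge 1-x$. The latter follows from $\beta\ge 1$, $\delta(p)\ge 1$, and $\gamma\ge 1$ (as $\gamma$ upper-bounds a Radon--Nikodym derivative whose integral is one), giving $A=\beta\gamma^{2p-2}\delta(p)^{2p}\ge 1>1-x$. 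Taking $2p$-th roots yields the claimed bound. The main obstacle is exactly this last verification: in Schweizer's original statement $\beta<1$ made the factor $(\alpha\gamma^{2p-2})^{k-j}$ automatically dominated, so the argument must be recast through the elementary inequality above to handle $\beta\ge 1$ without losing the clean closed form for $\delta(2p)$.
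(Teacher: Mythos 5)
Your proof is correct and follows essentially the same approach as the paper's: iterate Lemma~\ref{l:3.3*}, use the $L_p$ hypothesis to collapse the off-diagonal terms, apply Jensen's inequality $\norm{f}_{L_p(\mu_k)}\le\norm{f}_{L_{2p}(\mu_k)}$, and exploit $\beta\gamma^{2p-2}\delta(p)^{2p}\ge 1$ to absorb the $(\alpha\gamma^{2p-2})^{k-j}$ term into the geometric series. The paper phrases this last step as extending the geometric sum by one index (bounding by $\beta\gamma^{2p-2}\delta(p)^{2p}\sum_{i=0}^{k-j}\theta^i$) rather than verifying the equivalent elementary inequality $x^{k-j}(1-x-A)\le 0$ directly as you do, but this is only a cosmetic difference.
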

\begin{proof}
By iterating the bound in Lemma~\ref{l:3.3*} and defining $\theta = \alpha\gamma^{2p-2}$ we have
\begin{align*}
      \Vert \hat{q}_{j,k}(f)\Vert ^{2p}_{L_{2p}(\mu_{j})} &\leq \theta^{k-j}\Vert f\Vert ^{2p}_{L_{2p}(\mu_{k})} + \beta\gamma^{2p-2}\sum_{i=0}^{k-(j+1)}\theta^i\Vert \hat{q}_{j+1 +i,k}(f)\Vert ^{2p}_{L_p(\mu_{j+i+1})}\\
      \shortintertext{By assumption $\Vert \hat{q}_{j,k}(f)\Vert _{L_p(\mu_j)} \leq \delta(p) \Vert  f \Vert _{L_p(\mu_k)}$,}
      &\leq \theta^{k-j}\Vert f\Vert ^{2p}_{L_{2p}(\mu_{k})} + \beta\gamma^{2p- 2}\delta(p)^{2p}\Vert f\Vert _{L_p(\mu_k)}^{2p}\sum_{i=0}^{k-(j+1)}\theta^i\\
      \shortintertext{Since $\Vert f\Vert _{L_p(\mu_k)} \leq \Vert f\Vert _{L_{2p}(\mu_k)}$ and $\beta \geq 1$, $\gamma\ge 1$,}
      &\leq\beta\gamma^{2p- 2}\delta(p)^{2p}\Vert f\Vert _{L_{2p}(\mu_k)}^{2p}\sum_{i=0}^{k-j}\theta^i\\
      \shortintertext{Since $\theta < 1$,}
      &\leq\delta(p)^{2p}\frac{\beta\gamma^{2p- 2}}{1-\theta}\Vert f\Vert _{L_{2p}(\mu_k)}^{2p}.
\end{align*}
Recalling $\theta = \alpha\gamma^{2p-2}$ and raising both sides to the power of $\frac{1}{2p}$ we get the desired result. 
\end{proof}
\begin{lem}\label{lem:p=2} Assume that \eqref{ab} holds for each $k$, $1 < k \leq n$. For $p > 2$, $f \in B(E)$ we have that, 
\begin{equation}
    \nonumber
    \max\bc{
    \norm{q_{j,k}(f)^2}_{L_p(\mu_j)}, \norm{q_{j,k}(f)}_{L_p(\mu_j)}^2, \norm{q_{j,k}(f^2)}_{L_p(\mu_j)}} \leq c_{j,k}(p)\Vert f\Vert _{L_p(\mu_k)}^2
\end{equation}
where
\begin{equation*}
    c_{j,k}(p) =\gamma^\frac{2p-1}{p}\delta(2p)^2\theta(p,p/2),
\end{equation*}
$\delta(2p)$ is as in Lemma~\ref{l:3.4} and $\theta(q,p)$ is a constant such that $\norm{P_k f}_{L^q(\mu_k)}\le \theta(q,p) \norm{f}_{L^p(\mu_k)}$ for each $f\ge 0$.
\end{lem}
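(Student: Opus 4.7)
The plan is to use the identity $q_{j,k}(f) = \bar g_{j,j+1}\,\hat q_{j+1,k}(P_k f)$, which splits $q_{j,k}$ into a leading density-ratio factor, a backward chain $\hat q_{j+1,k}$ for which Lemma~\ref{l:3.4} supplies $L_{2p}$ bounds, and a terminal Markov step $P_k$ on which we can invoke hypercontractivity. The key trick used over and over is that for any $h\ge 0$,
\[
\int \bar g_{j,j+1}^r\,h\,d\mu_j \le \gamma^{r-1}\int \bar g_{j,j+1}\,h\,d\mu_j = \gamma^{r-1}\int h\,d\mu_{j+1},
\]
which lets us trade a power of $\bar g_{j,j+1}$ against $\gamma^{r-1}$ and a change of measure to $\mu_{j+1}$.

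First I would establish the bound on $\norm{q_{j,k}(f)^2}_{L_p(\mu_j)} = \norm{q_{j,k}(f)}_{L_{2p}(\mu_j)}^2$. Using the identity above with $r=2p$,
\[
\norm{q_{j,k}(f)}_{L_{2p}(\mu_j)}^{2p} = \int \bar g_{j,j+1}^{2p}\,[\hat q_{j+1,k}(P_k f)]^{2p}\,d\mu_j \le \gamma^{2p-1}\norm{\hat q_{j+1,k}(P_k f)}_{L_{2p}(\mu_{j+1})}^{2p},
\]
and then Lemma~\ref{l:3.4} gives $\norm{\hat q_{j+1,k}(P_k f)}_{L_{2p}(\mu_{j+1})}\le \delta(2p)\,\norm{P_k f}_{L_{2p}(\mu_k)}$. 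Taking $(2p)$-th roots and squaring yields
\[
\norm{q_{j,k}(f)}_{L_{2p}(\mu_j)}^2 \le \gamma^{(2p-1)/p}\,\delta(2p)^2\,\norm{P_k f}_{L_{2p}(\mu_k)}^2.
\]
The remaining step is the clean reduction $\norm{P_k f}_{L_{2p}(\mu_k)}^2 = \norm{(P_k f)^2}_{L_p(\mu_k)} \le \norm{P_k(f^2)}_{L_p(\mu_k)}$ by Jensen's inequality applied to the Markov kernel $P_k$, followed by hypercontractivity on the nonnegative function $f^2$: $\norm{P_k(f^2)}_{L_p(\mu_k)}\le \theta(p,p/2)\,\norm{f^2}_{L_{p/2}(\mu_k)} = \theta(p,p/2)\,\norm{f}_{L_p(\mu_k)}^2$. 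Chaining these gives exactly $c_{j,k}(p)\norm{f}_{L_p(\mu_k)}^2$.

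Bound (2) is then immediate from monotonicity of $L^q$ norms on a probability space: $\norm{q_{j,k}(f)}_{L_p(\mu_j)}^2 \le \norm{q_{j,k}(f)}_{L_{2p}(\mu_j)}^2$, which we have just bounded. For (3) I run the same argument with $f^2$ in place of $f$ but at the $L_p$ level: the identity gives $\norm{q_{j,k}(f^2)}_{L_p(\mu_j)}^p \le \gamma^{p-1}\norm{\hat q_{j+1,k}(P_k(f^2))}_{L_p(\mu_{j+1})}^p$, then the $L_p$ version of Lemma~\ref{l:3.4} (which holds inductively with $\delta(p)\le\delta(2p)$) and hypercontractivity directly on $f^2$ yield $\norm{q_{j,k}(f^2)}_{L_p(\mu_j)}\le \gamma^{(p-1)/p}\delta(p)\theta(p,p/2)\norm{f}_{L_p(\mu_k)}^2$, which is dominated by $c_{j,k}(p)\norm{f}_{L_p(\mu_k)}^2$ since $\gamma\ge 1$ and $\delta(2p)^2\ge\delta(p)$.

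The main obstacle is purely bookkeeping: picking the right $L^q$ exponent at each stage so that the Jensen step $(P_k f)^2\le P_k(f^2)$ aligns with the hypercontractive bound $L_p\to L_{p/2}$ instead of producing a mismatched $\theta(2p,p)$. Once the identity $q_{j,k}(f)=\bar g_{j,j+1}\hat q_{j+1,k}(P_k f)$ is isolated and Jensen is placed between Lemma~\ref{l:3.4} and hypercontractivity, the three bounds fall out by the same computation with the exponents adjusted.
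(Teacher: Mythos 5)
Your proposal follows the paper's argument essentially step for step: the same decomposition $q_{j,k}(f) = \bar g_{j,j+1}\,\hat q_{j+1,k}(P_k f)$, the same device $\bar g_{j,j+1}^r \le \gamma^{r-1}\bar g_{j,j+1}$ to change measure from $\mu_j$ to $\mu_{j+1}$, Lemma~\ref{l:3.4} for the $\hat q$-chain, Jensen's inequality $(P_k f)^2 \le P_k(f^2)$ to set up the $L_p\to L_{p/2}$ hypercontractivity step, and the same final comparison of the three quantities against $c_{j,k}(p)$. The only (valid, slightly cleaner) divergence is that you bound the second quantity via monotonicity $\norm{q_{j,k}(f)}_{L_p(\mu_j)}^2\le\norm{q_{j,k}(f)}_{L_{2p}(\mu_j)}^2$, reusing the first bound, whereas the paper gives it a separate direct estimate of $\gamma\delta(p)^2\norm{f}_{L_p(\mu_k)}^2$.
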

\begin{proof}
We bound each of the above terms using Lemma \ref{l:3.3} and Lemma \ref{l:3.4}.
\begin{align*}
    \norm{q_{j,k}(f)^2}_{L_p(\mu_j)} 
    &= \norm{q_{j,k}(f)}_{L_{2p}(\mu_j)}^2\\
    &= \norm{\bar{g}_{j,j+1}\hat{q}_{j+1,k}(P_k(f))}_{L_{2p}(\mu_j)}^2\\
    &= \bigg(\int \bar{g}_{j,j+1}^{2p}\hat{q}_{j+1,k}(P_k(f))^{2p}\mu_j(dx) \bigg)^\frac{1}{p}\\ 
     \shortintertext{Since $\bar{g}_{k-1,k} \leq \gamma$,}
     &\leq\bigg(\gamma^{2p-1}\int \bar{g}_{j,j+1}\hat{q}_{j+1,k}(P_k(f))^{2p}\mu_j(dx) \bigg)^\frac{1}{p} \\
      &\leq\bigg(\gamma^{2p-1}\int \hat{q}_{j+1,k}(P_k(f))^{2p}\mu_{j+1}(dx) \bigg)^\frac{1}{p} \\
    &\leq \bigg(\gamma^\frac{2p-1}{2p}\norm{\hat{q}_{j+1,k}(P_k(f))}_{L_{2p}(\mu_{j+1})}\bigg)^2
    \shortintertext{By Lemma \ref{l:3.4} and Cauchy-Schwarz,}
    &\leq \bigg(\gamma^\frac{2p-1}{2p}\delta(2p)\Vert P_k(f)\Vert _{L_{2p}(\mu_k)}\bigg)^2\\
    &= \gamma^\frac{2p-1}{p}\delta(2p)^2\Vert P_k(f)^2\Vert _{L_{p}(\mu_k)}\\
    &\leq \gamma^\frac{2p-1}{p}\delta(2p)^2\Vert P_k(f^2)\Vert _{L_{p}(\mu_k)}\\
    \shortintertext{Now since $f^2 \ge 0$ is 
    non-negative we can use the local hypercontractivity inequality:}
    &\leq \gamma^\frac{2p-1}{p}\delta(2p)^2\theta(p,p/2)\Vert f^2\Vert _{L_{p/2}(\mu_k)}\\
    &=\gamma^\frac{2p-1}{p}\delta(2p)^2\theta(p,p/2)\Vert f\Vert _{L_{p}(\mu_k)}^2.
\end{align*}
By Lemma \ref{l:3.3} and the uniform upper bound $\bar{g}_{k-1,k}(x) \leq \gamma$ we have,
\begin{align*}
    \Vert q_{j,k}(f)\Vert _{L_p(\mu_{j})}^2 &=  \Vert \bar{g}_{j,j+1}\hat{q}_{j+1,k}(P_k(f))\Vert _{L_p(\mu_{j})}^2\\
    &\leq \gamma\delta(p)^2\Vert P_k(f)\Vert _{L_p(\mu_k)}^2\\
    &\leq \gamma\delta(p)^2\Vert f\Vert _{L_p(\mu_k)}^2.
\end{align*}
Lastly,  we have that 
\begin{align*}
    \Vert q_{j,k}(f^2)\Vert _{L_p(\mu_j)} &\leq \gamma^\frac{p-1}{p}\Vert \hat{q}_{j+1,k}(P_k(f^2))\Vert _{L_p(\mu_{j+1})}\\
    &\leq \gamma^\frac{p-1}{p}\delta(p)\Vert P_k(f^2)\Vert _{L_p(\mu_k)}\\
    &\leq \gamma^\frac{p-1}{p}\delta(p)\theta(p,p/2)\Vert f^2\Vert _{L_{p/2}(\mu_k)}\\
    &= \gamma^\frac{p-1}{p}\delta(p)\theta(p,p/2)\Vert f\Vert _{L_{p}(\mu_k)}.   \end{align*}
    Since $\delta(2p) \geq \delta(p)\ge 1$, $\gamma > 1$, and $\theta(p,p/2)\ge 1$, we have that $\gamma \delta(p)^2 \leq \gamma^\frac{2p-1}{p}\delta(2p)^2\theta(p,p/2)$ and $\gamma^\frac{p-1}{p} \delta(p)\theta(p,p/2) \leq \gamma^\frac{2p-1}{p}\delta(2p)^2\theta(p,p/2)$ and hence  $$\max\bc{\gamma \delta(p)^2 ,\delta^\frac{p-1}{p} \delta(p)\theta(p,p/2),\gamma^\frac{2p-1}{p}\delta(2p)^2\theta(p,p/2)} = \gamma^\frac{2p-1}{p}\delta(2p)^2\theta(p,p/2).  $$
\end{proof}
    \begin{corollary}\label{cor:cv} Assume that \eqref{ab} holds for each $k$, $1 < k \leq n$. Let $p > 2$, $f \in B(E)$ and defining $\hat{c}_n$ and $\bar{v}_n$ as in Theorem \ref{t:2.1} and $\delta(2p)$ is as in Lemma~\ref{l:3.4}, we have 
\begin{equation*}
    \hat{c}_n \leq n \cdot(3 + \gamma) \gamma^\frac{2p-1}{p}\delta(2p)^2\theta(p,p/2)
\end{equation*}
and
\begin{equation*}
    \bar{v}_n \leq n\cdot\gamma\cdot\bigg(\frac{\beta}{1-\alpha}\bigg).
\end{equation*}
\end{corollary}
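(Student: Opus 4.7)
The plan is to deduce both bounds by combining the explicit form of $c_{j,n}(p)$ from Lemma~\ref{lem:p=2} with the uniform density-ratio bound $\bar g_{k-1,k}\le \gamma$ and the $L_2$ estimate of Lemma~\ref{l:3.3}. Neither part requires any new inequality; essentially the work is bookkeeping the powers of $\gamma$ and confirming the resulting sums collapse into the factor~$n$.

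For $\hat c_n$, Lemma~\ref{lem:p=2} already supplies a uniform bound $c_{j,n}(p)\le \gamma^{\frac{2p-1}{p}}\delta(2p)^2\theta(p,p/2)$ independent of $j$, so it remains only to bound the factor $2+\norm{q_{j,j+1}(1)-1}_{L_p(\mu_j)}$. Since $P_{j+1}$ is a Markov kernel we have $P_{j+1}(1)=1$, and therefore $q_{j,j+1}(1)=\bar g_{j,j+1}$. The triangle inequality combined with $\bar g_{j,j+1}\le \gamma$ gives $\norm{q_{j,j+1}(1)-1}_{L_p(\mu_j)}\le \norm{\bar g_{j,j+1}}_{L_p(\mu_j)}+1\le \gamma+1$, and hence the parenthesized factor is at most $3+\gamma$. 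Summing the uniform bound over the $n-1$ indices $j=1,\ldots,n-1$ (bounded for cleanliness by $n$) yields the first inequality.

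For $\bar v_n$, fix $k\le n$ and $f$ with $\norm{f}_{L_p(\mu_k)}\le 1$. I would first bound the variance by the second moment, $\Var_{\mu_j}(q_{j,k}(f))\le \norm{q_{j,k}(f)}_{L_2(\mu_j)}^2$, and then use the factorization $q_{j,k}(f)=\bar g_{j,j+1}\hat q_{j+1,k}(P_k f)$ together with $\bar g_{j,j+1}\le \gamma$ to pay one factor of $\gamma$ for a change of measure from $\mu_j$ to $\mu_{j+1}$, giving $\norm{q_{j,k}(f)}_{L_2(\mu_j)}^2\le \gamma\,\norm{\hat q_{j+1,k}(P_k f)}_{L_2(\mu_{j+1})}^2$. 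Applying Lemma~\ref{l:3.3} then bounds this by $\gamma\cdot\fc{\beta}{1-\alpha}\norm{P_k f}_{L_2(\mu_k)}^2$, and contractivity of $P_k$ in $L_2$ combined with Jensen's inequality (since $p\ge 2$) gives $\norm{P_k f}_{L_2(\mu_k)}\le \norm{f}_{L_2(\mu_k)}\le \norm{f}_{L_p(\mu_k)}\le 1$. The resulting estimate is independent of $j$, so summing over $j=1,\ldots,k$ and taking the maximum over $k\le n$ produces the claimed $n\cdot\gamma\cdot\fc{\beta}{1-\alpha}$.

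The only nontrivial point is to verify that the hypotheses of Lemma~\ref{l:3.3} (in particular $\be\ge 1$ and $\alpha<1$) are inherited from the standing assumption~\eqref{ab}; once that is in place, the argument is just triangle and Jensen inequalities together with the uniform density-ratio bound.
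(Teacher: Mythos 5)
Your proposal is correct and follows essentially the same route as the paper: the $\hat c_n$ bound comes from the closed form $c_{j,k}(p)=\gamma^{\frac{2p-1}{p}}\delta(2p)^2\theta(p,p/2)$ of Lemma~\ref{lem:p=2} plus the observation that $q_{j,j+1}(1)=\bar g_{j,j+1}$ (since $P_{j+1}1=1$) and $\|\bar g_{j,j+1}-1\|_{L_p(\mu_j)}\le \gamma+1$, and the $\bar v_n$ bound comes from the second-moment bound on the variance together with one factor of $\gamma$ to change measure from $\mu_j$ to $\mu_{j+1}$ and then Lemma~\ref{l:3.3}. Your argument is a bit more explicit than the paper's on the factorization $q_{j,k}(f)=\bar g_{j,j+1}\hat q_{j+1,k}(P_k f)$ and the $L_2$-contractivity of $P_k$, which the paper leaves implicit when it invokes Lemma~\ref{l:3.3}; otherwise the two proofs coincide.
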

\begin{proof}
Using our bounds established in Lemma \ref{l:3.3*} and Corollary \ref{lem:p=2}, we bound $\bar{v}_n$ and $\hat{c}_n$. 
\begin{align*}
    \hat{c}_n &= \sum_{j=1}^{n-1}c_{j,n}(p)\bigg(2 + \Vert q_{j,j+1}(1) -1\Vert _{L_p(\mu_j)}\bigg)\\
    \shortintertext{By Lemma \ref{lem:p=2}, }
    &= \sum_{j=1}^{n-1}\gamma^\frac{2p-1}{p}\delta(2p)^2\theta(p,p/2)\cdot\bigg(\frac{\beta}{1-\alpha\gamma^2} \bigg)^\frac{1}{2}\bigg(2 + \Vert q_{j,j+1} (1)-1\Vert _{L_2(\mu_j)}\bigg)\\
    &\leq (n-1) \cdot(3 + \gamma) \cdot \gamma^\frac{2p-1}{p}\delta(2p)^2\theta(p,p/2).
\end{align*}
Moreover, defining
\begin{align*}
    \hat{v}_n :&= \sup\bigg\{\sum_{j=1}^k \Var_{\mu_j}(q_{j,k}(f)) : \norm{f}_{L_p(\mu_k)} \leq 1 \bigg\}
\end{align*}
we have
\begin{align*}
\hat v_n 
    &\le \sup\bigg\{\sum_{j=1}^k \mu_j(q_{j,k}(f)^2) : \norm{f}_{L_2(\mu_k)} \leq 1 \bigg\}.\\
    \shortintertext{Using $\bar{g}_{k-1,k}(x) \leq \gamma$ and by Lemma \ref{l:3.3},}
    &\leq \sum_{j=1}^k \gamma\bigg(\frac{\beta}{1-\alpha}\bigg)\\
\shortintertext{This upper bound is increasing in $k$; hence}
 \bar{v}_n &= \max_{k\le n}\hat v_k \leq n\cdot\gamma\cdot\bigg(\frac{\beta}{1-\alpha}\bigg).
\end{align*}
\end{proof}

\section{Log-Sobolev Bounds \& Local Smoothing} \label{local mixing}In this section we consider the inter- and intra-mode variance of a mixture distribution. By limiting SMC to Markov chains whose generators are of the form $\langle f, \mathscr{L}f \rangle_{\mu} \leq \sum_{i=1}^m w_i\langle f, \mathscr{L}_{i}f \rangle_{\mu_i}$ we are able to obtain bounds for the intra-mode variance that depend only on the spectral gap of the decomposed chain. We also note that  Assumption \ref{a} provides us with log-Sobolev inequalities. Log-Sobolev bounds are stronger than Poincar\'e inequalities and in fact imply the Poincar\'e bounds $\Var_\pi(f) \leq c_{\textup{LSI}}\mathscr{E}_\pi(f,f)$ with the same constant. Therefore in this section we freely assume Poincar\'e inequalities. We will need to apply the stronger log-Sobolev bounds to give us hypercontractive results which depend only on local mixing. 

\subsection{Bounding the intra-mode variance}
\begin{lem}\label{l:markov}
    If the Markov semigroup $P_t$ is reversible with stationary distribution $\pi$, then
\begin{equation}
    \nonumber
    \frac{d^2}{dt^2}\Var_\pi(P_t f) \geq 0 .
\end{equation}
Hence, $-\ddd t \Var_\pi(P_tf) = -2\an{P_tf, \sL P_tf}$ is strictly decreasing.
\end{lem}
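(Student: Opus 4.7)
The plan is to differentiate $\Var_\pi(P_tf)$ twice and show each derivative has the claimed sign, using only (i) the semigroup identity $\ddd t P_tf = \sL P_tf$, (ii) stationarity ($\pi(P_tf)=\pi(f)$), and (iii) self-adjointness of $\sL$ on $L^2(\pi)$, which is exactly the content of reversibility.

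First I would write $\Var_\pi(P_tf) = \pi((P_tf)^2) - \pi(f)^2$ (using stationarity to freeze the mean term), and differentiate once to get
\[
\ddd t \Var_\pi(P_tf) \;=\; 2\pi\!\pa{P_tf \cdot \sL P_tf} \;=\; 2\an{P_tf, \sL P_tf}_\pi,
\]
which gives the identity in the second part of the claim. Since $-\an{g,\sL g}_\pi = \mathscr E_\pi(g,g)\ge 0$ for all $g$, the function $-\ddd t\Var_\pi(P_tf)=2\mathscr E_\pi(P_tf,P_tf)$ is nonnegative, as expected.

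For the convexity claim, set $g_t = P_tf$ and differentiate once more:
\[
\frac{d^2}{dt^2}\Var_\pi(P_tf) \;=\; 2\,\ddd t \an{g_t,\sL g_t}_\pi \;=\; 2\an{\sL g_t, \sL g_t}_\pi + 2\an{g_t, \sL^2 g_t}_\pi.
\]
Reversibility says $\sL$ is self-adjoint on $L^2(\pi)$, so $\an{g_t, \sL^2 g_t}_\pi = \an{\sL g_t,\sL g_t}_\pi$, and hence
\[
\frac{d^2}{dt^2}\Var_\pi(P_tf) \;=\; 4\,\norm{\sL P_tf}_{L^2(\pi)}^{\,2} \;\ge\; 0.
\]
This gives the convexity of $t\mapsto \Var_\pi(P_tf)$, which in turn implies that $-\ddd t\Var_\pi(P_tf)$ is nonincreasing (and strictly decreasing whenever $\sL P_tf\not\equiv 0$, i.e.\ whenever $f$ is not a fixed point of the semigroup).

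There is no real obstacle here; the only subtlety is justifying that we may differentiate under the integral sign and that $\sL^2 g_t$ is well defined, which follows from $f\in B(E)$ together with standard semigroup theory (one may alternatively bypass this by invoking the spectral resolution $P_t=\int e^{-\lambda t}\,dE_\lambda$ on constants${}^\perp$, in which case $\Var_\pi(P_tf)=\int e^{-2\lambda t}\,d\mu_f(\lambda)$ is manifestly a convex function of $t$ as a positive combination of convex exponentials).
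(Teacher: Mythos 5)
Your proof is essentially the same as the paper's: both differentiate twice, use $\ddd t P_t f = \sL P_t f$, apply self-adjointness of $\sL$ (reversibility) to equate $\an{P_tf, \sL^2 P_tf}_\pi$ with $\an{\sL P_tf, \sL P_tf}_\pi$, and conclude $\frac{d^2}{dt^2}\Var_\pi(P_tf) = 4\norm{\sL P_tf}_{L^2(\pi)}^2 \ge 0$. You correctly flag one thing the paper glosses over: the stated conclusion ``strictly decreasing'' really only follows as ``non-increasing'' from $\frac{d^2}{dt^2}\ge 0$ (strictness would require $\sL P_tf \not\equiv 0$), and your spectral-resolution remark is a nice optional alternative that sidesteps regularity concerns.
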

\begin{proof}
We compute
    \begin{align*}
         \frac{d^2}{dt^2}\Var_\pi(P_t f) &= \ddd t\pa{\ddd t \Var_\pi(P_t f) }\\
         & =\ddd t 2\an{P_t f, \sL P_t f}\\
         &= 2 (\an{\sL P_t f, \sL P_t f} + \an{P_t f , \sL^2 P_t f})\\
         \shortintertext{Since $\sL$ is self-adjoint,}
         &= 4\an{ \sL P_t f, \sL P_t f}\\
         &= 4\| \sL P_t f\|_{L^2(\pi)}^2 \geq 0 .
    \end{align*}
\end{proof}
\begin{lem} \label{intra}
Suppose that for a Markov generator $\mathscr{L}$ with stationary distribution $\pi = \sum_k w_k\pi_k $ there exists a decomposition $\langle f, \mathscr{L}f \rangle_{\pi} \leq \sum_{k=1}^m w_k\langle f, \mathscr{L}_{k}f \rangle_{\pi_k}$, where $\mathscr{L}_k$ has stationary measure $\pi_k$ and Poincar\'e constant $C_k$. Let $C^* = \max_k C_k$. Then for any function $f$, 
\begin{equation}
    \nonumber
   \sum_{k=1}^Mw_k \Var_{\pi_k}(P_Tf) \leq \frac{C^*}{2T}\Var_\pi(f).
\end{equation}
\end{lem}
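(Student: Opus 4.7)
The plan is to combine three ingredients: the generator decomposition assumption, the Poincar\'e inequalities for each component, and the semigroup identity for the time derivative of variance together with Lemma \ref{l:markov}.

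First I would apply the Poincar\'e inequality on each mode to the function $P_Tf$, using $C_k\le C^*$:
$$\Var_{\pi_k}(P_Tf) \leq C_k\,\mathscr{E}_{\pi_k}(P_Tf, P_Tf) \leq C^*\,\mathscr{E}_{\pi_k}(P_Tf, P_Tf).$$
Summing against the mixture weights $w_k$ and invoking the generator decomposition (rewritten in Dirichlet form as $\mathscr{E}_\pi(g,g) \ge \sum_k w_k \mathscr{E}_{\pi_k}(g,g)$, obtained by negating both sides of the hypothesis) gives
$$\sum_{k=1}^m w_k \Var_{\pi_k}(P_Tf) \leq C^* \sum_{k=1}^m w_k\, \mathscr{E}_{\pi_k}(P_Tf, P_Tf) \leq C^*\, \mathscr{E}_\pi(P_Tf, P_Tf).$$

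Next I would bound $\mathscr{E}_\pi(P_Tf, P_Tf)$ in terms of $\Var_\pi(f)$ using the flow. The standard identity $\ddd t \Var_\pi(P_tf) = 2\langle P_tf, \mathscr{L} P_tf\rangle_\pi = -2\mathscr{E}_\pi(P_tf, P_tf)$ gives
$$\Var_\pi(f) - \Var_\pi(P_Tf) = 2\int_0^T \mathscr{E}_\pi(P_sf, P_sf)\, ds.$$
By Lemma \ref{l:markov}, $s\mapsto \mathscr{E}_\pi(P_sf,P_sf)$ is non-increasing, so the integrand is bounded below by its value at $s=T$, yielding
$$2T\, \mathscr{E}_\pi(P_Tf,P_Tf) \leq \Var_\pi(f) - \Var_\pi(P_Tf) \leq \Var_\pi(f),$$
i.e.\ $\mathscr{E}_\pi(P_Tf,P_Tf) \le \Var_\pi(f)/(2T)$.

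Combining the two displays gives the claim. There is no real obstacle here; the only subtlety is remembering that Lemma \ref{l:markov} is what allows us to pull a pointwise-in-$s$ Dirichlet form out of the time integral (so that the bound involves $\mathscr{E}_\pi(P_Tf,P_Tf)$ rather than an average), and to notice that the decomposition assumption, when written with the sign flipped, says the global Dirichlet form upper-bounds the $w_k$-weighted sum of local Dirichlet forms (not the other way around), which is what is needed to convert local Poincar\'e inequalities into a bound involving $\mathscr{E}_\pi$.
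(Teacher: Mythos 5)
Your proof is correct and is essentially the paper's argument in a slightly different order: you bound $\sum_k w_k \Var_{\pi_k}(P_Tf)$ by $C^*\mathscr{E}_\pi(P_Tf,P_Tf)$ via the local Poincar\'e inequalities and the Dirichlet-form decomposition, then bound $\mathscr{E}_\pi(P_Tf,P_Tf)$ by $\Var_\pi(f)/(2T)$ using the variance flow and Lemma \ref{l:markov}, whereas the paper starts from $\Var_\pi(f)$ and chains the same inequalities downward. Both rest on exactly the same three ingredients, so there is no substantive difference.
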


\begin{proof}
    Since we have that
    \begin{align*}
        \ddd t \Var_\pi(P_tf) &= 2\an{P_tf, \sL P_tf},
    \end{align*}
we can consider
\begin{align*}
    \Var_\pi(f) &\geq 2 \int_0^T -\an{P_tf,\sL P_tf}_\pi dt \\
      \shortintertext{By Lemma \ref{l:markov}, since $-2\an{P_tf,\sL P_tf} $ is a strictly decreasing function of $t$,}
    &\geq -2 T\an{P_Tf,\sL P_Tf}_{\pi}\\
    &\geq 2T\sum_{k=1}^M-w_k \an{P_Tf,\sL_kP_Tf}_{\pi_k}\\
    &\geq \frac{2T}{C^*}\sum_{k=1}^Mw_k \Var_{\pi_k}(P_Tf).
\end{align*}
Hence $ \sum_{k=1}^Mw_k \Var_{\pi_k}(P_Tf) \leq \frac{C^*}{2T}\Var_\pi(f)$.
\end{proof}
\subsection{Bounding the total variance}
We now apply Lemma \ref{intra} to obtain bounds on $\Vert \hat{q}_{k-1,k}(f)\Vert _{L_2(\mu_{k-1})}$ which only depend on mixing within modes. First, note that $\Vert \hat{q}_{k-1,k}(f)\Vert _{L_2(\mu_{k-1})}^2  = \Var_{\mu_{k-1}}(\hat{q}_{k-1,k}(f)) + \mu_{k-1}(\hat{q}_{k-1,k}(f))^2 $. By the law of total variance, we will show that we can decompose the variance into the sum of its inter- and intra-mode variances, 
\begin{equation}
\label{e:inter-intra}
\Var_{\mu_{k-1}}(\hat{q}_{k-1,k}(f)) = \sum_{i=1}^{\blu{M_{k-1}}} \blu{w_{k-1}^{(i)}}\Var_{\mu_{k-1}^{(i)}}(\hat{q}_{k-1,k}(f)) + \sum_{i=1}^{\blu{M_{k-1}}} \blu{w_{k-1}^{(i)}} \bigg(\mu_{k-1}^{(i)}\big(P_{k-1}^{t_{k-1}}(\bar{g}_{k-1,k}f)\big) - \mu_{k-1}\big(\bar{g}_{k-1,k}f\big) \bigg)^2.
\end{equation}
We prove and make use of the above decomposition in the following section (Section \ref{mixing times}). We first show in the following Corollary to Lemma \ref{intra} that the intra-mode variance in \eqref{e:inter-intra} can be bounded. 
\begin{corollary}\label{Cor4} 
Suppose Assumption~\ref{a} holds. Then
    $$\sum_{i=1}^{\blu{M_{k-1}}} \blu{w_{k-1}^{(i)}}\Var_{\mu_{k-1}^{(i)}}(\hat{q}_{k-1,k}(f)) \leq \frac{C_{k-1}^* \cdot \gamma}{2T}\mu_k(f^2)$$
\end{corollary}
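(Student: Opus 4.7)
The plan is to apply Lemma \ref{intra} with the test function taken to be $\bar{g}_{k-1,k} f$ rather than $f$ itself, and then estimate the resulting variance on the right-hand side using the uniform density ratio bound from Assumption \ref{a}(1).

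Concretely, first I would unfold $\hat{q}_{k-1,k}(f) = P_{k-1}^{T}(\bar{g}_{k-1,k} f)$ from the definition and invoke Lemma \ref{intra} with the Markov semigroup $P_{k-1}^{T}$, stationary distribution $\mu_{k-1}$, decomposition $\mu_{k-1} = \sum_i w_i \mu_{k-1}^{(i)}$ with local Poincar\'e constants $c_{(k-1),i}$, and $C^*_{k-1} = \max_i c_{(k-1),i}$. Applied to $\bar{g}_{k-1,k} f$ this gives
\begin{equation*}
\sum_{i=1}^M w_i \Var_{\mu_{k-1}^{(i)}}\!\left(P_{k-1}^{T}(\bar{g}_{k-1,k} f)\right) \;\le\; \frac{C^*_{k-1}}{2T}\,\Var_{\mu_{k-1}}(\bar{g}_{k-1,k} f).
\end{equation*}
Here I need the fact, ensured by condition (3) of Assumption \ref{a}, that each $\mu_{k-1}^{(i)}$ satisfies a Poincar\'e inequality with constant dominated by the LSI constant $c_{(k-1),i}$ (log-Sobolev implies Poincar\'e with the same constant), so Lemma \ref{intra}'s hypothesis is met.

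Second, I would bound $\Var_{\mu_{k-1}}(\bar{g}_{k-1,k} f) \le \mu_{k-1}\!\left((\bar{g}_{k-1,k} f)^2\right)$ and convert the integral against $\mu_{k-1}$ into one against $\mu_k$ via the change-of-measure identity $\bar{g}_{k-1,k}\, d\mu_{k-1} = d\mu_k$:
\begin{equation*}
\mu_{k-1}\!\left(\bar{g}_{k-1,k}^{\,2} f^2\right) = \int \bar{g}_{k-1,k} f^2 \, d\mu_k \;\le\; \gamma\, \mu_k(f^2),
\end{equation*}
where the final inequality uses the uniform bound $\bar{g}_{k-1,k} \le \gamma$ from Assumption \ref{a}(1). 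Combining the two displays gives exactly the claimed bound $\frac{C^*_{k-1}\,\gamma}{2T}\,\mu_k(f^2)$.

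There is no real obstacle here; the only subtlety worth flagging is the implicit identification $P_{k-1} = P_{k-1}^{T}$ (so that the $T$ in the conclusion matches the time parameter of the kernel used by the SMC algorithm), and the fact that we are free to drop the squared mean term when passing from variance to second moment, since that only strengthens the inequality we use as an intermediate step.
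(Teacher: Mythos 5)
Your proposal is correct and matches the paper's argument essentially line for line: unfold $\hat q_{k-1,k}(f)=P_{k-1}^{t_{k-1}}(\bar g_{k-1,k}f)$, apply Lemma~\ref{intra} to the test function $\bar g_{k-1,k}f$ (using that the log-Sobolev constants from Assumption~\ref{a}(3) furnish the required Poincar\'e constants), drop the squared-mean term, and finish with one application of $\bar g_{k-1,k}\le\gamma$ together with the change of measure $\bar g_{k-1,k}\,d\mu_{k-1}=d\mu_k$. The only cosmetic point is that the statement's $T$ is the paper's $t_{k-1}$, which you correctly flag.
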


\begin{proof}
We have
    \begin{align*}
        \sum_{i=1}^{\blu{M_{k-1}}} \blu{w_{k-1}^{(i)}}\Var_{\mu_{k-1}^{(i)}}(\hat{q}_{k-1,k}(f)) &= \sum_{i=1}^{\blu{M_{k-1}}} \blu{w_{k-1}^{(i)}}\Var_{\mu_{k-1}^{(i)}}(P_{k-1}^{t_{k-1}}(\bar{g}_{k-1,k}f))\\
        \shortintertext{By Lemma \ref{intra},}
        &\leq \frac{C}{2t_{k-1}}\Var_{\mu_{k-1}}(\bar{g}_{k-1,k}f)\\
        &= \frac{C}{2t_{k-1}}\ba{\mu_{k-1}((\bar{g}_{k-1,k}f)^2) -\mu_k(\bar{g}_{k-1,k}f)^2}\\
        &\le 
        \frac{C \cdot \gamma}{2t_{k-1}}\mu_{{k-1}}(\bar{g}_{k-1,k}f^2) \\
        &\leq  \frac{C \cdot \gamma}{2t_{k-1}}\mu_{k}(f^2).
    \end{align*}
\end{proof}
Next we show that the inter-mode variance in \eqref{e:inter-intra} can be bounded. 
\begin{lem}\label{lem5} Given $\mu_{k-1}= \sum_{i=1}^{M_k} \blu{w_{k-1}^{(i)}} \mu_{k-1}^{(i)}$ we have that
\begin{equation*}
\sum_{i=1}^{\blu{M_{k-1}}} \blu{w_{k-1}^{(i)}} \bigg(\mu_{k-1}^{(i)}\big(P_{k-1}^{t_{k-1}}(\bar{g}_{k-1,k}f)\big) - \mu_{k-1}\big(\bar{g}_{k-1,k}f\big) \bigg)^2\leq \sum_{i=1}^{M} \frac{1}{w_i}\mu_k(f)^2.
\end{equation*}
\end{lem}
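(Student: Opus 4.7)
The plan is to unfold the definitions so the left-hand side becomes a variance of mode-averages, then exploit the mixture structure to bound each average by $\mu_k(f)/w_i$.

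First, let $h := \bar{g}_{k-1,k} f$ and $g := P_{k-1}^{t_{k-1}} h$. I will record two identities: by definition of the Radon-Nikodym derivative $\bar{g}_{k-1,k} = \tfrac{d\mu_k}{d\mu_{k-1}}$,
\begin{equation*}
\mu_{k-1}(h) = \int \bar{g}_{k-1,k} f \, d\mu_{k-1} = \mu_k(f),
\end{equation*}
and by stationarity of $P_{k-1}$ with respect to $\mu_{k-1}$, $\mu_{k-1}(g) = \mu_{k-1}(h) = \mu_k(f)$. Writing $a_i := \mu_{k-1}^{(i)}(g)$ and using $\mu_{k-1} = \sum_i w_i \mu_{k-1}^{(i)}$, the constraint $\sum_i w_i a_i = \mu_{k-1}(g) = \mu_k(f)$ emerges.

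Next, I would rewrite the left-hand side of the lemma as
\begin{equation*}
\sum_{i=1}^M w_i \bigl(a_i - \mu_k(f)\bigr)^2 = \sum_{i=1}^M w_i a_i^2 - \mu_k(f)^2 \le \sum_{i=1}^M w_i a_i^2,
\end{equation*}
so the task reduces to bounding $\sum_i w_i a_i^2$. Here I will use that $f$ (and hence $g = P_{k-1}^{t_{k-1}} \bar{g}_{k-1,k} f$) is non-negative in the context where this lemma is invoked (i.e.\ when bounding $L_2$-norms via squares in Section~\ref{mixing times}). Under $g\ge 0$, every $a_j = \mu_{k-1}^{(j)}(g) \ge 0$, so dropping all other modes gives
\begin{equation*}
w_i a_i \le \sum_j w_j a_j = \mu_k(f).
\end{equation*}
Multiplying through by $a_i$ and dividing by $w_i$ yields $w_i a_i^2 \le \mu_k(f)^2/w_i$, and summing over $i$ finishes the proof.

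The only subtle point is the sign condition on $g$: the lemma, as stated, does not carry an explicit non-negativity hypothesis, but its intended use (substituting $f\mapsto f^2$ or applying it within the $L_2$ norm decomposition of Section~\ref{mixing times}) always makes $g$ non-negative, so the step $w_i a_i \le \mu_k(f)$ is justified. I do not expect any real technical obstacle beyond carefully chaining these identities; the heart of the argument is the mixture inequality $w_i \mu_{k-1}^{(i)}(g) \le \mu_{k-1}(g)$ for non-negative $g$, which is where the dependence on $1/w_i$ (and the appearance of $\sum_i 1/w_i$ in the main theorem's constant $\beta$) originates.
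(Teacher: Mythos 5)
Your proof is correct, and it takes a genuinely cleaner route than the paper's. The paper's argument works pointwise in $x$: it rewrites the difference as $\int \bar g_{k-1,k}f\,\bigl(\tfrac{d(\mu_{k-1}^{(i)}P_{k-1}^{t_{k-1}})}{d\mu_{k-1}}-1\bigr)\,d\mu_{k-1}$, invokes $L^\infty$-contraction of the Markov semigroup acting on the density perturbation (i.e.\ $\bigl\|\tfrac{d(\mu_{k-1}^{(i)}P^t)}{d\mu_{k-1}}-1\bigr\|_{\sup}\le\bigl\|\tfrac{d\mu_{k-1}^{(i)}}{d\mu_{k-1}}-1\bigr\|_{\sup}$), and then bounds the density ratio pointwise by $1/w_i$ via $\mu_{k-1}\ge w_i\mu_{k-1}^{(i)}$. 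Your approach sidesteps the semigroup contraction entirely: you use stationarity only at the level of scalars ($\mu_{k-1}(P^t h)=\mu_{k-1}(h)$), reduce the left-hand side to the weighted variance identity $\sum_i w_i(a_i-\bar a)^2=\sum_i w_i a_i^2-\bar a^2$, and then bound each $w_i a_i$ by the total mass $\mu_k(f)$ using only $a_j\ge 0$. This is more elementary, needs no claim about how the kernel acts on $L^\infty$ density perturbations, and localizes the $1/w_i$ factor to a one-line mixture inequality. Both proofs share the implicit hypothesis $f\ge 0$, which you flagged correctly; note that the paper's own proof also silently uses it when pulling $\|\cdot\|_{\sup}$ out of $\int\bar g_{k-1,k}f\,\ab{\cdots}\,d\mu_{k-1}$, and the lemma is only ever invoked (in Lemma~\ref{l:af+b}) under a non-negativity assumption, so this is not a gap in either argument.

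One small wording nit: the phrase ``multiplying through by $a_i$ and dividing by $w_i$'' doesn't quite describe the chain $w_i a_i^2 \le \mu_k(f)\,a_i \le \mu_k(f)\cdot\mu_k(f)/w_i$, which is what you actually mean; the inequality is applied twice, not once followed by a division. The conclusion is correct.
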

\begin{proof}
We have
     \begin{align*}
& \sum_{i=1}^{\blu{M_{k-1}}} \blu{w_{k-1}^{(i)}} \bigg(\mu_{k-1}^{(i)}\big(P_{k-1}^{t_{k-1}}(\bar{g}_{k-1,k}f)\big) - \mu_{k-1}\big(\bar{g}_{k-1,k}f\big) \bigg)^2\\
&= \sum_{i=1}^{\blu{M_{k-1}}} \blu{w_{k-1}^{(i)}}\bigg(\int_\Omega \bar{g}_{k-1,k}(x)f(x)\pa{\frac{d\mu_{k-1}^{(i)}P_{k-1}^{t_{k-1}}(x)}{d\mu_{k-1}(x)}-1}\mu_{k-1}(dx)\bigg)^2\\
&\leq \sum_{i=1}^{\blu{M_{k-1}}} \blu{w_{k-1}^{(i)}}\bigg(\int_\Omega \bar{g}_{k-1,k}(x)f(x)\ab{\frac{d\mu_{k-1}^{(i)}P_{k-1}^{t_{k-1}}(x)}{d\mu_{k-1}(x)}-1}\mu_{k-1}(dx)\bigg)^2\\
&\leq \sum_{i=1}^{\blu{M_{k-1}}} \blu{w_{k-1}^{(i)}}\bigg(\int_\Omega \bar{g}_{k-1,k}(x)f(x)\norm{\frac{d\mu_{k-1}^{(i)}P_{k-1}^{t_{k-1}}(x)}{d\mu_{k-1}(x)}-1}_{\sup}\mu_{k-1}(dx)\bigg)^2\\
\shortintertext{Then we have by Markov kernel contraction that $\norm{\frac{d\mu_{k-1}^{(i)}P_{k-1}^{t_{k-1}}(x)}{d\mu_{k-1}(x)}-1}_{\sup} \leq \norm{\frac{d\mu_{k-1}^{(i)}(x)}{d\mu_{k-1}(x)}-1}_{\sup}$:}
&\leq \sum_{i=1}^{\blu{M_{k-1}}} \blu{w_{k-1}^{(i)}}\bigg(\int_\Omega \bar{g}_{k-1,k}(x)f(x)\norm{\frac{d\mu_{k-1}^{(i)}(x)}{d\mu_{k-1}(x)}-1}_{\sup}\mu_{k-1}(dx)\bigg)^2\\
&\leq \sum_{i=1}^{\blu{M_{k-1}}} \blu{w_{k-1}^{(i)}}\bigg(\int_\Omega \bar{g}_{k-1,k}(x)f(x)\max\bc{\norm{\frac{d\mu_{k-1}^{(i)}(x)}{d\mu_{k-1}(x)}}_{\sup},1}\mu_{k-1}(dx)\bigg)^2\\
&=\sum_{i=1}^{\blu{M_{k-1}}} \blu{w_{k-1}^{(i)}}\bigg(\int_\Omega \bar{g}_{k-1,k}(x)f(x)\max\bc{\norm{\frac{d\mu_{k-1}^{(i)}(x)}{\sum_i\blu{w_{k-1}^{(i)}}d\mu_{k-1}^{(j)}(x)}}_{\sup},1}\mu_{k-1}(dx)\bigg)^2\\
&\leq \sum_{i=1}^{\blu{M_{k-1}}} \blu{w_{k-1}^{(i)}}\bigg(\int_\Omega \bar{g}_{k-1,k}(x)f(x)\max\bc{\norm{\rc{\blu{w_{k-1}^{(i)}}}\frac{d\mu_{k-1}^{(i)}(x)}{d\mu_{k-1}^{(i)}(x)}}_{\sup},1}\mu_{k-1}(dx)\bigg)^2\\
\shortintertext{Since $w_i \leq 1$,}
&=\sum_{i=1}^{\blu{M_{k-1}}} \blu{w_{k-1}^{(i)}}\bigg(\int_\Omega \bar{g}_{k-1,k}(x)f(x)\frac{1}{\blu{w_{k-1}^{(i)}}}\mu_{k-1}(dx)\bigg)^2\\
&=\sum_{i=1}^{\blu{M_{k-1}}} \frac{1}{\blu{w_{k-1}^{(i)}}}\mu_k(f)^2.
\end{align*}
\end{proof}

\section{Local Mixing Time Improvements} \label{mixing times}
We apply our intra- and inter-mode bounds to improve the mixing time results from Schweizer. 

\begin{lem}\label{l:af+b}
Let all assumptions from Lemma 3 hold and assume that $f$ is a non-negative function. Then 

\begin{equation}
    \nonumber
    \Vert \hat{q}_{k-1,k}(f)\Vert _{L_2(\mu_{k-1})}^2 \leq \lambda_{k-1}\Vert f\Vert _{L_2(\mu_{k})}^2 +  \blu{\beta_{k-1}}\mu_k(f)^2,
\end{equation}
where
\begin{equation}
    \nonumber
    \lambda_{k-1} = \frac{C_{k-1}^*\cdot \gamma}{2t_{k-1}} ,   \; \; \blu{\beta_{k-1}} = 1 +\sum_{i=1}^{\blu{M_{k-1}}}\frac{1}{\blu{w_{k-1}^{(i)}}}.
\end{equation}
\end{lem}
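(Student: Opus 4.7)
The plan is to directly assemble the bound from the two pieces already proved in Section \ref{local mixing}, together with the basic identity that an $L^2$ norm splits as variance plus squared mean. First, I would write
\[
\Vert \hat{q}_{k-1,k}(f)\Vert_{L_2(\mu_{k-1})}^2 = \Var_{\mu_{k-1}}\bigl(\hat{q}_{k-1,k}(f)\bigr) + \mu_{k-1}\bigl(\hat{q}_{k-1,k}(f)\bigr)^2.
\]
Using the identity $\mu_{k-1}(\hat{q}_{k-1,k}(f)) = \mu_k(f)$ (which follows from $\mu_{k-1}P_{k-1} = \mu_{k-1}$ and $\mu_{k-1}(\bar{g}_{k-1,k}\,\cdot) = \mu_k(\cdot)$) immediately handles the mean term: it contributes exactly $\mu_k(f)^2$, which is the ``$1$'' in $\beta = 1 + \sum_i 1/w_i$.

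Second, I would apply the law of total variance to decompose $\Var_{\mu_{k-1}}(\hat{q}_{k-1,k}(f))$ with respect to the mixture components $\mu_{k-1} = \sum_i w_i \mu_{k-1}^{(i)}$, producing exactly the intra- and inter-mode pieces written out in equation \eqref{e:inter-intra}:
\[
\Var_{\mu_{k-1}}\bigl(\hat{q}_{k-1,k}(f)\bigr) = \sum_{i=1}^M w_i \Var_{\mu_{k-1}^{(i)}}\bigl(\hat{q}_{k-1,k}(f)\bigr) + \sum_{i=1}^M w_i\bigl(\mu_{k-1}^{(i)}(\hat{q}_{k-1,k}(f)) - \mu_{k-1}(\hat{q}_{k-1,k}(f))\bigr)^2.
\]
This is the step where the generator-decomposition assumption gets leveraged, since it is precisely what allowed Corollary \ref{Cor4} to bound the intra-mode term by the \emph{local} Poincar\'e/log-Sobolev constants rather than a global one.

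Third, I would plug in the two bounds already proved: Corollary \ref{Cor4} gives
\[
\sum_{i=1}^M w_i \Var_{\mu_{k-1}^{(i)}}\bigl(\hat{q}_{k-1,k}(f)\bigr) \leq \frac{C_{k-1}^* \gamma}{2 t_{k-1}} \mu_k(f^2) = \lambda_{k-1} \Vert f\Vert_{L_2(\mu_k)}^2,
\]
and Lemma \ref{lem5} gives
\[
\sum_{i=1}^M w_i\bigl(\mu_{k-1}^{(i)}(\hat{q}_{k-1,k}(f)) - \mu_{k-1}(\hat{q}_{k-1,k}(f))\bigr)^2 \leq \sum_{i=1}^M \frac{1}{w_i} \mu_k(f)^2.
\]
Adding these two to the squared-mean term $\mu_k(f)^2$ collects all the $\mu_k(f)^2$ contributions into $\beta = 1 + \sum_i 1/w_i$, yielding the claimed inequality.

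There is no real obstacle here — the conceptual work was already done in Section \ref{local mixing}; this lemma is just the bookkeeping that packages the intra-mode bound, the inter-mode bound, and the mean-squared term into the single-step recursive form \eqref{ab} needed to feed into Lemma \ref{l:3.3}, Lemma \ref{l:3.4}, and ultimately Theorem \ref{t:2.1}. The only mild care required is in tracking the non-negativity assumption on $f$ (used implicitly when writing $\mu_k(f^2) = \Vert f \Vert_{L_2(\mu_k)}^2$ and when applying the total variance decomposition cleanly) and in confirming the identity $\mu_{k-1}(\hat{q}_{k-1,k}(f)) = \mu_k(f)$ so that the ``$+1$'' in $\beta$ is accounted for exactly once.
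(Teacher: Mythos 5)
Your proposal is correct and follows essentially the same argument as the paper: split the $L^2$ norm into variance plus squared mean, apply the law of total variance over the mixture components, invoke Corollary \ref{Cor4} for the intra-mode piece and Lemma \ref{lem5} for the inter-mode piece, and collect the two $\mu_k(f)^2$ contributions into $\beta = 1 + \sum_i 1/w_i$. The one small inaccuracy is your remark on where non-negativity of $f$ enters — it is not needed for $\mu_k(f^2) = \Vert f\Vert_{L_2(\mu_k)}^2$ or for the law of total variance, but rather is used inside the proof of Lemma \ref{lem5} to pass the sup-norm bound through the integral of $\bar g_{k-1,k} f$.
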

\begin{proof} By variance decomposition, 
\begin{align*}
&\Vert \hat{q}_{k-1,k}(f)\Vert _{L_2(\mu_{k-1})}^2 = \Vert P_{k-1}^{t_{k-1}}(\bar{g}_{k-1,k}f)\Vert _{L_2(\mu_{k-1})}^2\\
&=\int  (P_{k-1}^{t_{k-1}}(\bar{g}_{k-1,k}f) - \mu_{k-1}(\bar{g}_{k-1,k}f))^2\mu_{k-1}(dx) + \mu_{k-1}(\bar{g}_{k-1,k}f)^2\\
&=\sum_{i=1}^{\blu{M_{k-1}}}\blu{w_{k-1}^{(i)}}\int (P_{k-1}^{t_{k-1}}(\bar{g}_{k-1,k}f) - \mu_{k-1}( P_{k-1}^{t_{k-1}}(\bar{g}_{k-1,k}f)))^2\mu_{k-1}^{(i)}(dx) + \mu_{k-1}(\bar{g}_{k-1,k}f)^2\\
&=\sum_{i=1}^{\blu{M_{k-1}}}\blu{w_{k-1}^{(i)}}\int \big(P_{k-1}^{t_{k-1}}(\bar{g}_{k-1,k}f) - \mu_{k-1}^{(i)}(P_{k-1}^{t_{k-1}}(\bar{g}_{k-1,k}f))\big)^2\mu_{k-1}^{(i)}(dx)\\
&\quad + \sum_{i=1}^{\blu{M_{k-1}}}\blu{w_{k-1}^{(i)}} w_i\big(\mathbb{E}_{\mu_{k-1}^{(i)}}[P_{k-1}^{t_{k-1}}(\bar{g}_{k-1,k}f)]-\mathbb{E}_{\mu_{k-1}}[P_{k-1}^{t_{k-1}}(\bar{g}_{k-1,k}f)]\big)^2 +\mu_k(f)^2\\
&=\sum_{i=1}^{\blu{M_{k-1}}}\blu{w_{k-1}^{(i)}}\Var_{\mu_{k-1}^{(i)}}(P_{k-1}^{t_{k-1}}(\bar{g}_{k-1,k}f)) + \sum_{i=1}^{\blu{M_{k-1}}}\blu{w_{k-1}^{(i)}}\big(\mathbb{E}_{\mu_{k-1}^{(i)}}[P_{k-1}^{t_{k-1}}(\bar{g}_{k-1,k}f)]-\mathbb{E}_{\mu_{k-1}}[P_{k-1}^{t_{k-1}}(\bar{g}_{k-1,k}f)]\big)^2 \\
&\quad+ \mu_k(f)^2\\
\shortintertext{Applying Corollary \ref{Cor4} and Lemma \ref{lem5},}
&\leq \frac{C_{k-1}^*\cdot\gamma}{2t_{k-1}}\mu_k(f^2) + \pa{1 + \sum_{i=1}^{\blu{M_{k-1}}} \frac{1}{\blu{w_{k-1}^{(i)}}}} \mu_k(f)^2.
\end{align*}
\end{proof}
To be in correspondence with Lemma \ref{l:3.3} we require that $\lambda_{k}$ and $\beta_k$ are uniformly upper bounded for all $k$. 
We now introduce $\alpha \in (0,1)$ as an upper bound for the mixing at any particular level. The following is then immediate from Lemma~\ref{l:af+b}.
\begin{corollary}\label{c:mt}
\label{prop_time} Given $p \geq 2$, $\alpha \in (0,1)$. \blu{Let $\beta_{k-1} = 1 +\sum_{i=1}^{\blu{M_{k-1}}}\frac{1}{\blu{w_{k-1}^{(i)}}}$, as in Lemma \ref{l:af+b}. Assume there exists a uniform bound $\beta > 1 $ such that $\max_{1\leq k \leq n}\beta_k \leq \beta$.} If for all $1 \leq k \leq n$,
\begin{equation*}
    t_k \geq \frac{C^*_k\gamma}{2\cdot\alpha},
\end{equation*}
then we have that for all $1 \leq k \leq n$, 
\begin{equation*}
    \lambda_{k} \leq \alpha
\end{equation*}
which yields for all $k$, 
\begin{equation*}
    \Vert \hat{q}_{k-1,k}(f)\Vert _{L_2(\mu_{k-1})}^2 \leq \alpha\Vert f\Vert _{L_2(\mu_{k})}^2 +  \beta\mu_k(f)^2.
\end{equation*}
\end{corollary}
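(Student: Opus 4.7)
The plan is to observe that this corollary is essentially a direct substitution into Lemma \ref{l:af+b}, rewriting the time assumption so that the coefficient $\lambda_k$ coming from the intra-mode variance bound is uniformly controlled by $\alpha$.

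First I would recall the conclusion of Lemma \ref{l:af+b}, which gives
\[
\Vert \hat{q}_{k-1,k}(f)\Vert _{L_2(\mu_{k-1})}^2 \leq \lambda_{k-1}\Vert f\Vert _{L_2(\mu_{k})}^2 +  \beta\mu_k(f)^2
\]
with $\lambda_{k-1} = \frac{C_{k-1}^* \gamma}{2 t_{k-1}}$ and $\beta = 1 + \sum_{i=1}^M 1/w_i$. Since this lemma was proved under Assumption \ref{a} for a non-negative $f$, I inherit exactly the hypotheses stated in the corollary.

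Next I would use the standing assumption $t_k \geq \frac{C_k^* \gamma}{2 \alpha}$ and invert it: since $\alpha \in (0,1)$ and $C_k^*, \gamma > 0$, we have $\frac{1}{t_k} \leq \frac{2\alpha}{C_k^* \gamma}$, which gives
\[
\lambda_k \;=\; \frac{C_k^* \gamma}{2 t_k} \;\leq\; \frac{C_k^* \gamma}{2}\cdot \frac{2\alpha}{C_k^* \gamma} \;=\; \alpha.
\]
This holds for every $1\le k\le n$, so the bound $\lambda_k \le \alpha < 1$ is uniform across levels, as is required to later iterate the single-step inequality through the full sequence of SMC measures via Lemma \ref{l:3.3}.

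Finally, I would plug $\lambda_{k-1} \leq \alpha$ into the bound from Lemma \ref{l:af+b}: since $\Vert f\Vert _{L_2(\mu_k)}^2 \ge 0$, replacing $\lambda_{k-1}$ by the larger quantity $\alpha$ preserves the inequality, yielding
\[
\Vert \hat{q}_{k-1,k}(f)\Vert _{L_2(\mu_{k-1})}^2 \leq \alpha\Vert f\Vert _{L_2(\mu_{k})}^2 + \beta\mu_k(f)^2,
\]
exactly as claimed. There is no real obstacle here, as the corollary is purely a rephrasing of Lemma \ref{l:af+b} chosen to match the form $\alpha\Vert f\Vert^2 + \beta\mu_k(f)^2$ (with $\alpha<1$, $\beta\ge 1$) required to apply Lemma \ref{l:3.3}, Lemma \ref{l:3.3*}, Lemma \ref{l:3.4}, and ultimately Theorem \ref{t:2.1}; the only mild subtlety worth flagging is that $p\ge 2$ plays no role in this particular step and is recorded only to advertise how the corollary will be consumed downstream.
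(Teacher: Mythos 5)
Your proof is correct and matches the paper's argument exactly: the paper simply notes the corollary is immediate from Lemma \ref{l:af+b} by substituting the time lower bound into $\lambda_k = \frac{C_k^*\gamma}{2t_k}$ to get $\lambda_k \le \alpha$, which is precisely what you did.
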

\section{Hypercontractivity}
\label{s:hypercontractivity}
Up to now we have only used the weaker Poincar\'e inequalities. In this section, we make use of the log-Sobolev inequalities to give us a hypercontractive result. Hypercontractivity allows us to exploit the smoothing properties of a Markov kernel to obtain bounds of the form \begin{align}\Vert P_tf\Vert _{L^q} &\leq \Vert f\Vert _{L^p},
\label{e:hc}
\end{align}
with $q > p >1$. The general idea is that $\Vert P_tf\Vert _{L^q} \rightarrow \Vert f\Vert _{L^1}$ as $t \rightarrow \infty$. 
Hence, given $p,q$, for some time $t$ we should be able to obtain \eqref{e:hc}, and given $p, t$, we have $\Vert P_tf\Vert _{L^{q(t)}} \leq \Vert P_tf\Vert _{L^p}$, for some function $q(t)\to \infty$ as $t\to \infty$.
Importantly, we show that we can obtain these bounds given only local mixing of $P_t$ up to a multiplicative constant. This allows us to obtain bounds in Lemma \ref{lem:p=2} that only require local mixing. 

\begin{lem}\label{hyper} Let $P_t$ be a reversible Markov process with stationary distribution $\pi = \sum_k w_k \pi_k$. Let 
$q(t) = 1+(p-1)e^{2t/C^*}$ where $C^*=\max_k c_k$.
Assume that the following hold. 
\begin{enumerate}
    \item (Assumption \ref{a}, part 2) There exists a decomposition of the form, 
    $$\langle f, \mathscr{L}f \rangle_\pi \leq \sum_{k=1}^m w_i \langle f, \mathscr{L}_k f \rangle_{\pi_k}.$$
    \item For each $\pi_k$ there exists a log-Sobolev inequality of the form, 
    $$\Ent{f^2}{\pi_k} \leq 2c_k\cdot\mathscr{E}_{\pi_k}(f,f).$$
\end{enumerate}

Let $f>0$ and $w^*=\min_k w_k$.
Then $\fc{\norm{P_tf}_L^{q(t)}(\pi)}{(w^*)^{\rc{q(t)}}}$ is a non-increasing function in $t$:
\begin{equation*}
    \frac{\Vert P_tf\Vert _{L^{q(t)}(\pi)}}{(w^*)^\frac{1}{q(t)}} \leq  \frac{\Vert P_0f\Vert _{L^{q(0)}(\pi)}}{(w^*)^\frac{1}{q(0)}} = \frac{\Vert f\Vert _{L^p(\pi)}}{(w^*)^\frac{1}{p}} 
\end{equation*}
and
\[
\Vert P_t f\Vert_{L^{q(t)}(\pi)} \le \theta(q(t),p) \Vert f \Vert_{L^p(\pi)}
\]
where
$\theta(q,p) = \pf{1}{w^*}^{\frac{1}{p}-\frac{1}{q}}.$

\end{lem}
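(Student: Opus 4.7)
The plan is to reduce the claimed monotonicity to a one-line differential inequality. Writing $u = P_tf$, $q = q(t)$, and $M(t) = \int u^{q}\,d\pi$, I will show that
\[H(t) := \tfrac{1}{q(t)}\log M(t) + \tfrac{1}{q(t)}\log(1/w^*)\]
satisfies $H'(t)\le 0$, which is equivalent to the claimed non-increasing property. Differentiating using $\partial_t u = \sL u$, reversibility $\int u^{q-1}\sL u\,d\pi = -\mathscr{E}_\pi(u^{q-1},u)$, and the identity $q\int u^q\log u\,d\pi = \Ent{u^q}{\pi} + M\log M$ will cause the $\log M$ terms to telescope, leaving
\[H'(t) = \fc{1}{M(t)}\ba{\fc{q'(t)}{q(t)^2}\Ent{u^q}{\pi} - \mathscr{E}_\pi(u^{q-1},u)} - \fc{q'(t)}{q(t)^2}\log(1/w^*).\]

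Next I would bound the two terms in the bracket. The Dirichlet form is controlled by Stroock's pointwise inequality $(a-b)(a^{q-1}-b^{q-1})\ge \tfrac{4(q-1)}{q^2}(a^{q/2}-b^{q/2})^2$ (valid for $a,b>0$ and $q\ge 1$), which yields $\mathscr{E}_\pi(u^{q-1},u)\ge \tfrac{4(q-1)}{q^2}\mathscr{E}_\pi(u^{q/2},u^{q/2})$ for both diffusion and jump generators. The hard step is bounding $\Ent{u^q}{\pi}$, since no \emph{global} log-Sobolev inequality is assumed; this is where the mixture structure must be exploited. Letting $g = u^{q/2}$ and $a_k = \int g^2\,d\pi_k$, a direct expansion of the entropy gives
\[\Ent{g^2}{\pi} = \sum_k w_k\,\Ent{g^2}{\pi_k} + \bar a\cdot\KL(p\;\|\;w),\]
where $\bar a = \sum_k w_k a_k = M$ and $p_k = w_k a_k/\bar a$. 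The local log-Sobolev hypothesis bounds the first sum by $2C^*\sum_k w_k\mathscr{E}_{\pi_k}(g,g)$, which is at most $2C^*\mathscr{E}_\pi(g,g)$ by the generator decomposition assumption. The cross term is controlled by $\KL(p\;\|\;w)\le -\sum_k p_k\log w_k\le \log(1/w^*)$, producing the defective log-Sobolev inequality
\[\Ent{u^q}{\pi}\le 2C^*\mathscr{E}_\pi(u^{q/2},u^{q/2}) + M(t)\log(1/w^*).\]

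Finally, substituting into $H'(t)$ and choosing $q'(t) = 2(q(t)-1)/C^*$---the ODE whose solution is exactly $q(t) = 1 + (p-1)e^{2t/C^*}$---makes $2C^*q'(t) - 4(q(t)-1) = 0$, so the $\mathscr{E}_\pi(u^{q/2},u^{q/2})$ contribution vanishes. The surviving entropy defect contributes $\tfrac{q'(t)}{q(t)^2}\log(1/w^*)$ inside the first term of $H'(t)$, which is annihilated exactly by the correction term $-\tfrac{q'(t)}{q(t)^2}\log(1/w^*)$ coming from $\tfrac{1}{q(t)}\log(1/w^*)$. Hence $H'(t)\le 0$, so $\norm{P_tf}_{L^{q(t)}(\pi)}/(w^*)^{1/q(t)}\le \norm{f}_{L^p(\pi)}/(w^*)^{1/p}$, which rearranges to $\theta(q,p) = (1/w^*)^{1/p-1/q}$. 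The main obstacle throughout is the cross-component term $\bar a\cdot\KL(p\;\|\;w)$: local LSIs alone cannot eliminate it, but the normalization by $(w^*)^{-1/q(t)}$ is tuned precisely so that its logarithmic derivative cancels the mixture entropy penalty.
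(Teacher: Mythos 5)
Your proposal is correct and follows essentially the same route as the paper: the same logarithmic derivative identity, Stroock's inequality on the Dirichlet form, the mixture entropy decomposition $\Ent{g^2}{\pi}=\sum_k w_k\Ent{g^2}{\pi_k}+\bar a\,\KL(p\,\Vert\,w)$, the local LSI plus generator decomposition to handle the intra-mode entropy, the bound $\KL(p\,\Vert\,w)\le\log(1/w^*)$ for the inter-mode defect, and the same ODE for $q(t)$. The only difference is organizational: you first package the mixture and local-LSI bounds into a clean ``defective log-Sobolev inequality'' $\Ent{u^q}{\pi}\le 2C^*\mathscr{E}_\pi(u^{q/2},u^{q/2})+M\log(1/w^*)$ and then substitute, whereas the paper carries the component-level terms through the derivative computation and cancels the cross-entropy against the normalization at the end; the mathematics is identical.
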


\begin{proof} Note first that in our notation $P_tf^q = (P_tf)^q$. The result holds if $\frac{\Vert P_tf\Vert _{L^{q(t)}(\pi)}}{(w^*)^\frac{1}{q(t)}}$ is a decreasing function of $q(t)$, which is equivalent to showing, 
$$\frac{\partial}{\partial t}\log(\frac{\Vert P_tf\Vert _{L^{q(t)}(\pi)}}{(w^*)^\frac{1}{q(t)}}) \leq 0. $$

We will make use of the following results. 
\begin{enumerate}
    \item We have a decomposition for entropy given by
    \begin{equation}\label{ent}
    \Ent{f^2}{\pi} = \sum_{k=1}^m w_k \Ent{f^2}{\pi_k} + \Ent{\mathbb{E}_{\pi_k}[f^2]}{}.
    \end{equation}
    \item We have the following derivative (see below), 
\begin{equation}\label{log-deriv}
    \frac{\partial}{\partial t}\log(\mathbb{E}_\pi[P_tf^{q(t)}])=\frac{q'(t)}{q(t)\mathbb{E}_\pi[P_tf^{q(t)}]}\mathbb{E}_\pi[\log(P_tf^{q(t)})P_tf^{q(t)}]+\frac{q(t)}{\mathbb{E}_\pi[P_tf^{q(t)}]}\mathbb{E}_\pi[P_tf^{q(t)-1}\mathscr{L}P_tf]
\end{equation}
    \item (\cite[Lemma 8.11]{vanHandel})  For $q>1$ and a reversible Markov process $P_t$ with generator $\sL$ we have
\begin{equation}\label{8.11}
    \langle f^{q-1},\mathscr{L} f \rangle_\pi \leq \frac{4(q-1)}{q^2}\langle f^\frac{q}{2}, \mathscr{L}f^\frac{q}{2}\rangle_\pi.
\end{equation}
\end{enumerate}
We show the computations for the derivative (\ref{log-deriv})
\begin{align*}
    \frac{\partial}{\partial t}\log(\mathbb{E}_\pi[P_tf^{q(t)}])&= \frac{1}{\mathbb{E}_\pi[P_tf^{q(t)}]}\mathbb{E}_\pi\left[\frac{\partial}{\partial t}P_tf^{q(t)}\right]\\
    &= \frac{1}{\mathbb{E}_\pi[P_tf^{q(t)}]}\mathbb{E}_\pi\left[\frac{\partial}{\partial t}e^{q(t)\log(P_tf)}\right]\\
    &= \frac{1}{\mathbb{E}_\pi[P_tf^{q(t)}]}\mathbb{E}_\pi\left[\frac{\partial}{\partial t}\bigg(q(t)\log(P_tf)\bigg)P_tf^{q(t)}\right]\\
    &= \frac{1}{\mathbb{E}_\pi[P_tf^{q(t)}]}\mathbb{E}_\pi\left[\bigg(q'(t)\log(P_tf)+\frac{q(t)}{P_tf}\mathscr{L}P_tf\bigg)P_tf^{q(t)}\right]\\
     &= \frac{q'(t)}{\mathbb{E}_\pi[P_tf^{q(t)}]}\mathbb{E}_\pi[\log(P_tf)P_tf^{q(t)}]+\frac{q(t)}{\mathbb{E}_\pi[P_tf^{q(t)}]}\mathbb{E}_\pi[P_tf^{q(t)-1}\mathscr{L}P_tf]\\
     &=\frac{q'(t)}{q(t)\mathbb{E}_\pi[P_tf^{q(t)}]}\mathbb{E}_\pi[\log(P_tf^{q(t)})P_tf^{q(t)}]+\frac{q(t)}{\mathbb{E}_\pi[P_tf^{q(t)}]}\mathbb{E}_\pi[P_tf^{q(t)-1}\mathscr{L}P_tf].
\end{align*}
Now we consider the following, 
\begin{align*}
    &\frac{\partial}{\partial t}\log(\frac{\Vert P_tf\Vert _{L^{q(t)}(\pi)}}{(w^*)^\frac{1}{q(t)}})\\
    &= \frac{\partial}{\partial t}\log(\Vert P_tf\Vert _{L^{q(t)}(\pi)}) - \frac{\partial}{\partial t}\log((w^*)^\frac{1}{q(t)})\\
    &= \frac{\partial}{\partial t}\frac{\log(\mathbb{E}_\pi[P_tf^{q(t)}])}{q(t)} - \log(w^*)\frac{\partial}{\partial t}\frac{1}{q(t)}\\
        &= -\frac{q'(t)}{q(t)^2}\log(\mathbb{E}_\pi[P_tf^{q(t)}]) + \frac{1}{q(t)}\frac{\partial}{\partial t}\log(\mathbb{E}_\pi[P_tf^{q(t)}]) + \log(w^*)\frac{q'(t)}{q(t)^2}\\
        &= -\frac{q'(t)}{q(t)^2}\log(\mathbb{E}_\pi[P_tf^{q(t)}]) + \frac{q'(t)}{q(t)^2\mathbb{E}_\pi[P_tf^{q(t)}]}\mathbb{E}_\pi[\log(P_tf^{q(t)})P_tf^{q(t)}]\\
        &\quad +\frac{1}{\mathbb{E}_\pi[P_tf^{q(t)}]}\mathbb{E}_\pi[P_tf^{q(t)-1}\mathscr{L}P_tf] + \log(w^*)\frac{q'(t)}{q(t)^2}\\
        &=\frac{q'(t)}{q(t)^2\mathbb{E}_\pi[P_tf^{q(t)}]}\bigg(\mathbb{E}_\pi[P_tf^{q(t)}\log(P_tf^{q(t)})]-\mathbb{E}_\pi[P_tf^{q(t)}]\log(\mathbb{E}_\pi[P_tf^{q(t)}])\bigg)\\
        &\quad + \frac{1}{\mathbb{E}_\pi[P_tf^{q(t)}]}\mathbb{E}_\pi[P_tf^{q(t)-1}\mathscr{L}P_tf] + \log(w^*)\frac{q'(t)}{q(t)^2}\\
        &= \frac{q'(t)}{q(t)^2\mathbb{E}_\pi[P_tf^{q(t)}]}\Ent{P_tf^{q(t)}}{\pi}+ \frac{1}{\mathbb{E}_\pi[P_tf^{q(t)}]}\mathbb{E}_\pi[P_tf^{q(t)-1}\mathscr{L}P_tf] + \log(w^*)\frac{q'(t)}{q(t)^2}\\
        \shortintertext{By (\ref{8.11}),}
        &\leq \frac{q'(t)}{q(t)^2\mathbb{E}_\pi[P_tf^{q(t)}]}\Ent{P_tf^{q(t)}}{\pi}+\frac{4(q(t) - 1)}{q(t)^2\mathbb{E}_\pi[P_tf^{q(t)}]}\langle P_tf^\frac{q(t)}{2}, \mathscr{L}P_tf^\frac{q(t)}{2} \rangle_\pi + \log(w^*)\frac{q'(t)}{q(t)^2}\\
        \shortintertext{By the first assumption,}
        &\leq \frac{q'(t)}{q(t)^2\mathbb{E}_\pi[P_tf^{q(t)}]}\Ent{P_tf^{q(t)}}{\pi}+\frac{4(q(t) - 1)}{q(t)^2\mathbb{E}_\pi[P_tf^{q(t)}]}\sum_k w_k\langle P_tf^\frac{q(t)}{2}, \mathscr{L}_kP_tf^\frac{q(t)}{2} \rangle_{\pi_k} + \log(w^*)\frac{q'(t)}{q(t)^2}\\
        \shortintertext{By the second assumption (log-Sobolev inequality),}
        &\leq \frac{q'(t)}{q(t)^2\mathbb{E}_\pi[P_tf^{q(t)}]}\Ent{P_tf^{q(t)}}{\pi}+\frac{4(q(t) - 1)}{q(t)^2\mathbb{E}_\pi[P_tf^{q(t)}]}\sum_k w_k\bigg(-\frac{1}{2c_k}\Ent{P_tf^{q(t)}}{\pi_k}\bigg) + \log(w^*)\frac{q'(t)}{q(t)^2}\\
        \shortintertext{By the entropy decomposition (\ref{ent}),}
        &=\frac{q'(t)}{q(t)^2\mathbb{E}_\pi[P_tf^{q(t)}]}\bigg(\sum_k w_k \Ent{P_tf^{q(t)}}{\pi_k} + \Ent{\mathbb{E}_{\pi_k}[P_tf^{q(t)}]}{}\bigg)\\
        &\quad +\frac{4(q(t) - 1)}{q(t)^2\mathbb{E}_\pi[P_tf^{q(t)}]}\sum_k w_k\bigg(-\frac{1}{2c_k}\Ent{P_tf^{q(t)}}{\pi_k}\bigg) + \log(w^*)\frac{q'(t)}{q(t)^2}\\
        \shortintertext{Rearranging terms and applying $\Ent{\mathbb{E}_{\pi_k}[P_tf^{q(t)}]}{} = \sum_k w_k \mathbb{E}_{\pi_k}[P_tf^{q(t)}]\log(\frac{\mathbb{E}_{\pi_k}[P_tf^{q(t)}]}{\sum_kw_k\mathbb{E}_{\pi_k}[P_tf^{q(t)}]})$ yields}
        &= \frac{q'(t)}{q(t)^2\mathbb{E}_\pi[P_tf^{q(t)}]}\sum_kw_k\bigg(1 - \frac{4(q(t)-1)}{2c_kq'(t)}\bigg)\Ent{P_tf^{q(t)}}{\pi_k} \\
        &\quad + \frac{q'(t)}{q(t)^2\mathbb{E}_\pi[P_tf^{q(t)}]}\sum_k w_k \mathbb{E}_{\pi_k}[P_tf^{q(t)}]\log(\frac{\mathbb{E}_{\pi_k}[P_tf^{q(t)}]}{\sum_kw_k\mathbb{E}_{\pi_k}[P_tf^{q(t)}]})+ \log(w^*)\frac{q'(t)}{q(t)^2}.
\end{align*}
We stop here to note that currently we have, 
\begin{align*}
     \frac{\partial}{\partial t}\log(\frac{\Vert P_tf\Vert _{L^{q(t)}(\pi)}}{(w^*)^\frac{1}{q(t)}}) &\leq \underbrace{\frac{q'(t)}{q(t)^2\mathbb{E}_\pi[P_tf^{q(t)}]}}_{\geq 0}\sum_kw_k\bigg(1 - \frac{4(q(t)-1)}{2c_kq'(t)}\bigg)\underbrace{\Ent{P_tf^{q(t)}}{\pi_k}}_{\geq 0}\\
     &\quad+\underbrace{\frac{q'(t)}{q(t)^2\mathbb{E}_\pi[P_tf^{q(t)}]}}_{\geq 0}\sum_k w_k \underbrace{\mathbb{E}_{\pi_k}[P_tf^{q(t)}]}_{\geq 0}\log(\frac{\mathbb{E}_{\pi_k}[P_tf^{q(t)}]}{\sum_kw_k\mathbb{E}_{\pi_k}[P_tf^{q(t)}]})\\
     &\quad+ \log(w^*)\frac{q'(t)}{q(t)^2}
\end{align*}
Therefore, since $\log(x)$ is strictly increasing, we have that, 
\begin{align*}
    &\frac{q'(t)}{q(t)^2\mathbb{E}_\pi[P_tf^{q(t)}]}\sum_k w_k \mathbb{E}_{\pi_k}[P_tf^{q(t)}]\log(\frac{\mathbb{E}_{\pi_k}[P_tf^{q(t)}]}{\sum_kw_k\mathbb{E}_{\pi_k}[P_tf^{q(t)}]}) \\
    &\leq \frac{q'(t)}{q(t)^2\mathbb{E}_\pi[P_tf^{q(t)}]}\sum_k w_k \mathbb{E}_{\pi_k}[P_tf^{q(t)}]\log(\frac{\mathbb{E}_{\pi_k}[P_tf^{q(t)}]}{w_k\mathbb{E}_{\pi_k}[P_tf^{q(t)}]})\\
    &\leq\frac{q'(t)}{q(t)^2\mathbb{E}_\pi[P_tf^{q(t)}]}\sum_k w_k \mathbb{E}_{\pi_k}[P_tf^{q(t)}]\log(\frac{1}{w^*})\\
    &= \frac{q'(t)}{q(t)^2\mathbb{E}_\pi[P_tf^{q(t)}]}\mathbb{E}_{\pi}[P_tf^{q(t)}]\log(\frac{1}{w^*})\\
    &= -\frac{q'(t)}{q(t)^2}\log(w^*)
\end{align*}
Applying this to our inequality for the derivative yields, 
\begin{align*}
     \frac{\partial}{\partial t}\log(\frac{\Vert P_tf\Vert _{L^{q(t)}(\pi)}}{(w^*)^\frac{1}{q(t)}})& \leq \frac{q'(t)}{q(t)^2\mathbb{E}_\pi[P_tf^{q(t)}]}\sum_kw_k\bigg(1 - \frac{4(q(t)-1)}{2c_kq'(t)}\bigg)\Ent{P_tf^{q(t)}}{\pi_k}\\ 
     &\quad- \frac{q'(t)}{q(t)^2}\log(w^*)+ \frac{q'(t)}{q(t)^2}\log(w^*)\\ 
     &= \frac{q'(t)}{q(t)^2\mathbb{E}_\pi[P_tf^{q(t)}]}\sum_kw_k\bigg(1 - \frac{4(q(t)-1)}{2c_kq'(t)}\bigg)\Ent{P_tf^{q(t)}}{\pi_k}\\
     &\leq \underbrace{\frac{q'(t)}{q(t)^2\mathbb{E}_\pi[P_tf^{q(t)}]}}_{\geq 0}\cdot\bigg(1 - \frac{4(q(t) - 1)}{2c^*q'(t)}\bigg)\underbrace{\sum_kw_k\Ent{P_tf^{q(t)}}{\pi_k}}_{\geq 0}. 
\end{align*}
We can solve for when the above expression is 0 by solving the ODE $1 - \frac{4(q(t) - 1)}{2c^*q'(t)} = 0$. 
The solution to this ODE with initial value $q(0) = p$ is $q(t) = 1+ (p-1) e^{2t/c^*}$. Therefore we have that, $\frac{\Vert P_tf\Vert _{L^{q(t)}(\pi)}}{(w^*)^\frac{1}{q(t)}}$ is a decreasing function of $q(t) = 1+ (p-1) e^{2t/C^*}$, and so 
$$\frac{\Vert P_tf\Vert _{L^{q(t)}(\pi)}}{(w^*)^\frac{1}{q(t)}} \leq \frac{\Vert f\Vert _{L^p(\pi)}}{(w^*)^\frac{1}{p}}. .$$
\end{proof}
In particular, for $q = 4, p = 2$ we have that for $t = \frac{c^*\log(3)}{2}$, 
$$\Vert P_tf\Vert _{L^4(\pi)} \leq \frac{(w^*)^{1/4}}{(w^*)^{1/2}}\Vert f\Vert _{L^2(\pi)}= \frac{1}{(w^*)^\frac{1}{4}}\Vert f\Vert _{L_2(\pi)}$$
\section{Proof of Main Result} \label{s:proofs}
In this section we will provide state and prove a complete version of the main theorem.

  We note that for $p=2^k$ with $k \geq0$ the following function is defined recursively, 
    $$\delta(2p) = \delta(p)\bigg(\frac{\beta\cdot\gamma^{2p -2}}{1-\alpha\gamma^{2p-2}}\bigg)^\frac{1}{2p}$$
    with $\delta(1) = 1$, as in Lemma \ref{l:3.4} and Lemma \ref{l:3.3} respectively. 

\begin{thm}[\bf Complete MSE Bound] \label{t:mainVar_complete} Suppose Assumption~\ref{a} holds.
Then for all $\epsilon > 0$ with $0<\alpha < \frac{1}{\gamma^{2p-2}}$, $\beta = 1 + \blu{\max_{1\leq k \leq n} \sum_{i=1}^{M_{k}} \frac{1}{w_{k}^{(i)}}} > 1$, \blu{$w^* = \min_{k,i} w_{k}^{(i)}$, $p=2^k$}, $k \in \mathbb{N}_{>1}$, choosing
    \begin{enumerate}
        \item $ N \geq \max\bigg\{\frac{n}{\epsilon}\cdot\frac{\gamma\beta}{1-\alpha}(1 + \Vert f\Vert _{L^p(\mu_n)}^2
        + 2\Vert f - \mu_n(f)\Vert ^2_{\sup}),  \; 2 n (3+\gamma)\gamma^\frac{2p-1}{2p}\delta(2p)^2\frac{1}{\blu{(w^*)}^\frac{1}{2p}}\bigg\}$
        \item $t_k \geq \frac{C^*_k}{2}\max \bigg\{\frac{\gamma}{\alpha}, \log(\frac{p-1}{\frac{p}{2}-1})\bigg\}$
        \end{enumerate}
    yields
    \begin{equation*}
        \blu{\mathbb{E}[(\eta_n^N(f) - \mu_n(f))^2]}\leq \epsilon.
    \end{equation*}
\end{thm}

\begin{proof}\label{p:Var}
    The proof follows from the results of Section \ref{Var bounds} and Section \ref{local mixing}. 

    By Lemma \ref{l:2.2}, 
    \begin{equation}\mathbb{E}[(\eta_n^N(f) - \mu_n(f))^2] \leq 2\Var(\nu_n^N(f)) + 2\Vert f - \mu_n(f)\Vert ^2_{\sup,n}\Var(\nu_n^N(1))
    \label{e:t2-1}
    \end{equation}
    and by Theorem \ref{t:2.1},
    $$ \Var(\nu_n^N(f)) \leq \frac{1}{N}\pa{\bar{v}_n + \frac{2\bar{v}_n\norm{f}_{L_p(\mu_n)}^2 \hat{c}_n }{N}}.$$
    Furthermore, by Corollary \ref{cor:cv},
      $$ \hat{c}_n \leq n \cdot(3 + \gamma) \gamma^\frac{2p-1}{p}\delta(2p)^2\theta(p,p/2)$$
      and 
      $$\bar{v}_n \leq n \cdot \gamma\bigg(\frac{\beta}{1-\alpha}\bigg)$$
      when $\alpha, \beta$ satisfy the single-step bound \eqref{e:single-step}.
      Lastly we note that $\bar{c}_n = \max_{k\leq n} \hat{c}_k = \hat{c}_n$ and for Theorem \ref{t:2.1} to hold we require $N \geq 2\bar{c}_n$. Given this,
       \begin{equation}\Var(\nu_n^N(f)) \leq \frac{\bar{v}_n}{N}\big(1 + \norm{f}_{L_p(\mu_n)}^2\big).\label{e:t2-2}
    \end{equation}
    To ensure $N \geq 2\bar{c}_n$, by Lemma \ref{hyper}, with $\theta(p,\frac{p}{2})= \frac{1}{\blu{(w^*)}^\frac{1}{2p}}$ \blu{for $w^* = \min_{k,i}w_k^{(i)}$}, we require $p \le  q(t) = 1 + (\frac{p}{2}-1)e^\frac{2t}{c^*_k}$ for each $k$, i.e.
    $$t \ge \frac{\max_k c_k^*}{2}\log(\frac{p-1}{\frac{p}{2}-1}).$$
    Furthermore, from Lemma \ref{l:af+b} and Corollary \ref{c:mt}, the single-step bound \eqref{e:single-step} holds for $\alpha$ and \blu{$\beta_k \leq \beta = 1+\max_{1 \leq k \leq n}\sum_{i=1}^{M_k} \rc{w_k^{(i)}}$} when 
    $$t \geq \frac{\gamma \max_k C_k^*}{2\alpha}.$$
    Our result will be achieved only when
    $$t \geq \frac{\max_k C^*_k}{2}\max \bigg\{\frac{\gamma}{\alpha}, \log(\frac{p-1}{\frac{p}{2}-1})\bigg\}$$
    Lastly, we 
    combine \eqref{e:t2-1} and \eqref{e:t2-2} to obtain
    $$\Var(\eta_n^N(f)) \leq \frac{2\bar{v}_n}{N}\big(1 + \norm{f}_{L_p(\mu_n)}^2 + 2\Vert f - \mu_n(f)\Vert _{\sup}^2 \big).$$
    Setting this less than $\epsilon$ and solving for $N$ yields, 
    $$N \geq \frac{2\bar{v}_n}{\epsilon}\big(1 + \norm{f}_{L_p(\mu_n)}^2 + 2\Vert f - \mu_n(f)\Vert _{\sup}^2 \big).$$
    Therefore with the mixing time specified above and $$N \geq\max \bigg\{ \frac{2\bar{v}_n}{\epsilon}\big(1 + \norm{f}_{L_p(\mu_n)}^2 + 2\Vert f - \mu_n(f)\Vert _{\sup}^2 \big),2\bar{c}_n\bigg\},$$
    we have
    $\blu{\mathbb{E}[(\eta_n^N(f) - \mu_n(f))^2]}\le \epsilon$.
\end{proof}
\begin{proof}[Proof of Theorem \ref{t:mainVar} (Simplified MSE Bound)] 
This follows from using Theorem \ref{t:mainVar_complete} with parameters $k=2$ ($p=4$) and $\alpha = \frac{1}{2\gamma^6}$, and noting that the MSE does not change if we replace $f$ with $f-\mu_n(f)$. Because $\gamma \geq 1$ implies $1\leq\frac{1}{1-\frac{1}{2\gamma^k}} \leq 2 $, we therefore  have that
\begin{align*}
    \delta(8) &= \pf{\beta}{1-\frac{1}{2\gamma^6}}^{1/2}\pf{\beta\gamma^2}{1-\frac{1}{2\gamma^4}}^{1/4} \pf{\beta\gamma^6}{1-\frac{1}{2}}^{1/8}\\
    &\leq (2\beta)^{1/2}(2\beta\gamma^2)^\frac{1}{4}(2\beta\gamma^6)^\frac{1}{8}= \left(2\beta\right)^{7/8}\gamma^{5/4}.
\end{align*} 
Furthermore, noting that $\beta = 1 + \max_k\sum_{i=1}^{M_k}\frac{1}{w_k^{(i)}} \leq \frac{2\max_kM_k}{w^*}$ and substituting removes the $\beta$ term. 
\end{proof}

\bibliographystyle{imsart-number} 
\bibliography{bibliography}       


\end{document}